\DeclareMathAlphabet{\mathpzc}{OT1}{pzc}{m}{it}
\numberwithin{equation}{section}
\theoremstyle{plain}
\newtheorem{thm}{Theorem}[section]
\newtheorem{conj*}{Conjecture}
\newtheorem{Cor}[thm]{Corollary}
\newtheorem{cor}[thm]{Corollary}
\newtheorem{prop}[thm]{Proposition}
\newtheorem{lem}[thm]{Lemma}
\theoremstyle{definition}
\theoremstyle{remark}
\newtheorem{rmk}[equation]{Remark}
\newtheorem{rem}[equation]{Remark}
\newcommand\pf{\begin{proof}}
\newcommand\epf{\end{proof}}
\newcommand\bp{\begin{pmatrix}}
\newcommand\ep{\end{pmatrix}}
\newcommand\ben{\begin{enumerate}}
\newcommand\een{\end{enumerate}}
\newcommand\be{\begin{equation}}
\newcommand\ee{\end{equation}}
\newcommand\benn{\begin{equation*}}
\newcommand\eenn{\end{equation*}}
\newcommand\bea{\begin{eqnarray}}
\newcommand\eea{\end{eqnarray}}
\newcommand\beann{\begin{eqnarray*}}
\newcommand\eeann{\end{eqnarray*}}
\newcommand{\figref}[1]{\hyperref[#1]{Figure \ref{#1}}}
\newcommand{\lemref}[1]{\hyperref[#1]{Lemma \ref{#1}}}
\newcommand{\thmref}[1]{\hyperref[#1]{Theorem \ref{#1}}}
\newcommand{\conjref}[1]{\hyperref[#1]{Conjecture \ref{#1}}}
\newcommand{\propref}[1]{\hyperref[#1]{Proposition \ref{#1}}}
\newcommand{\corref}[1]{\hyperref[#1]{Corollary \ref{#1}}}
\newcommand{\defref}[1]{\hyperref[#1]{Definition \ref{#1}}}
\newcommand{\rmkref}[1]{\hyperref[#1]{Remark \ref{#1}}}
\newcommand{\qref}[1]{\hyperref[#1]{Question \ref{#1}}}
\newcommand{\secref}[1]{\hyperref[#1]{\S\ref{#1}}}
\newcommand{\appref}[1]{\hyperref[#1]{Appendix \ref{#1}}}
\newcommand{\R}{\mathbb{R}}
\newcommand{\br}{\mathbb{R}}
\newcommand{\Z}{\mathbb{Z}}
\newcommand{\z}{\mathbb{Z}}
\newcommand{\N}{\mathbb{N}}
\newcommand{\bH}{\mathbb{H}}
\newcommand{\fh}{\frak{h}}
\newcommand{\cA}{\mathcal{A}}
\newcommand{\cB}{\mathcal{B}}
\newcommand{\cC}{\mathcal{C}}
\newcommand{\cF}{\mathcal{F}} 
\newcommand{\cG}{\mathcal{G}}
\newcommand{\cK}{\mathcal{K}}
\newcommand{\cM}{\mathcal{M}}
\newcommand{\cP}{\mathcal{P}}
\newcommand{\cS}{\mathcal{S}}
\newcommand{\gep}{\epsilon}  
\newcommand{\G}{\Gamma}      
\newcommand{\g}{\gamma}      
\newcommand{\PSL}{\operatorname{PSL}}
\newcommand{\SO}{\operatorname{SO}}
\newcommand{\Id}{\operatorname{Id}}
\newcommand{\supp}{\operatorname{supp}}
\newcommand{\vol}{\operatorname{vol}}
\renewcommand{\hat}{\widehat}
\newcommand{\dist}{\operatorname{dist}}
\renewcommand{\>}{\right\rangle}
\newcommand{\la}{\langle}
\newcommand{\ra}{\rangle}
\newcommand{\bk}{\backslash}
\newcommand{\ba}{\backslash}
\newcommand{\op}{\operatorname}
\newcommand{\e}{\gep}
\theoremstyle{plain}
\numberwithin{equation}{section}
\newcommand{\bms}{\mathsf m}
\newcommand{\m}{\mathsf m}
\newcommand{\BR}{\rm{BR}}
\newcommand{\PS}{\rm{PS}}
\newcommand{\Leb}{\ell}
\newcommand{\Haar}{\rm{Haar}}
\renewcommand{\P}{\mathcal P}
\newcommand{\T}{\operatorname{T}}
\newcommand{\bc}{\mathbb C}
\begin{document}
\title[]{
Shrinking targets for the geodesic flow on geometrically finite hyperbolic manifolds}
\author{Dubi Kelmer}
\author{Hee Oh}
\email{hee.oh@yale.edu}
\address{Yale University 
New Haven, CT}

\thanks{Kelmer is partially supported by NSF CAREER grant DMS-1651563 and Oh is partially supported by NSF grants.}
\email{kelmer@bc.edu}
\address{Boston College,  Chestnut Hill, MA}

\subjclass{}%
\keywords{}%

\date{\today}%
\dedicatory{}%
\commby{}%
\begin{abstract}
Let $\cM$ be a geometrically finite hyperbolic manifold.
We present a very general theorem on the shrinking target problem for the geodesic flow, using its
 exponential mixing. This includes a strengthening of Sullivan's logarithm law for the excursion rate of the geodesic flow. More generally,  
 we prove logarithm laws for the first hitting time for shrinking cusp neighborhoods, shrinking tubular neighborhoods of a closed geodesic,
 and shrinking metric balls, as well as give quantitative estimates for the time a generic geodesic spends in such shrinking targets. 
\end{abstract}

 \maketitle
 
 \section{Introduction}

 Let $\cM$ be a complete hyperbolic manifold of dimension $n\ge 2$.  Denote by $\mathcal G^t$ the geodesic flow on the unit tangent bundle $\T^1(\cM)$. 
 If $\cM$ is of finite volume, but non-compact, Sullivan \cite{Sul82} showed in 1982 the following logarithm law
 for the rate of the excursion of the geodesic flow: for any $o\in \cM$, and for almost all $x\in \T^1(\cM)$,  
\begin{equation}\label{c1} \limsup_{t\to \infty} \frac{d(\mathcal G^t(x), o)}{\log t} =\frac{1}{n-1}\end{equation}
 where $d(\mathcal G^t(x), o)$ is the hyperbolic distance between the basepoint of $\mathcal G^t(x)$ and  $o$.  
 
  This result can be viewed as a special case of the so-called shrinking target problem for the geodesic flow, which asks the behavior
 of a generic geodesic ray with respect to a given sequence of shrinking subsets. Indeed,
 if we consider the family of shrinking cuspidal neighborhoods $\mathfrak h_t:=\{z\in \cM: d(o, z)>t\}$, $t> 1$,
 then  \eqref{c1} is equivalent to the following logarithm law for the first hitting time: for almost all $x$,
\begin{equation}\label{cc2} \liminf_{t\to \infty} \frac{
\log \tau_{ \mathfrak h_t} (x) }{ t} ={n-1} \end{equation}
where
  $\tau_{\mathfrak h_t}(x):= \inf\{s>0: \mathcal G^s(x)\in \mathfrak h_t \}$.

In this paper, we investigate shrinking target problems for the geodesic flow on a geometrically finite hyperbolic manifold
$\cM$, and prove results which are far reaching strengthening and generalizations of \eqref{cc2}, and hence of \eqref{c1}.

Let $\bH^n$ denote the $n$-dimensional hyperbolic space and let $G:=\op{Isom}^+(\bH^n)$ be the group of all orientation preserving isometries. We may present a  complete hyperbolic manifold $\cM$ as the quotient $\Gamma\ba \bH^n$ where
 $\Gamma$  is a torsion-free discrete subgroup of $G$. We assume that $\Gamma$ is
   Zariski dense  and geometrically finite in the whole paper.
Denote by $\Lambda\subset \partial \bH^n$ the limit set of $\Gamma$ and by $0<\delta\le n-1$ the critical exponent of $\Gamma$. The maximal entropy of the geodesic flow
on $\T^1(\cM)$ is given by $\delta$, and there exists a unique ergodic probability measure of maximal entropy, called the Bowen-Margulis-Sullivan measure, which we denote by $\mathsf m$.
The support of $\m$ is precisely the non-wandering set for the geodesic flow  and hence the shrinking target problem in this setting
is interesting only for those shrinking subsets in the support of $\m$ and for $\m$-almost all points. 
 Now since $\mathcal G^t$ is ergodic for $\mathsf m$, the Birkhoff ergodic theorem 
  says that for a given Borel subset $B\subset \T^1(\cM)$, we have the following for $\m$-almost all $x\in \T^1(\cM)$,
 \begin{equation} \label{erg} \lim_{t\to \infty} \frac{\Leb\{0<s<t: \mathcal G^s(x)\in B\}}{t} = \m (B)\end{equation}
 where $\Leb$ denotes the Lebesgue measure on $\br$.
The shrinking target problem asks a finer question on the
set of times $\{s>0: \mathcal G^s(x)\in B_t\}$ for a given family $\{B_t\}$ of shrinking sets and for $\m$-a.e. $x$.
The three main questions we address in this paper for $\m$-a.e. $x\in \T^1(\cM)$ are as follows:

\begin{enumerate}

\item (Logarithm laws) Is there a logarithm law for the first hitting time 
\begin{equation}\label{tty}\tau_{B_t}(x):=\inf\{s>0: \mathcal G^s(x) \in B_t\}?\end{equation}  

\item (Shrinking rate threshold) How fast can $B_t$ shrink so that $$ \tau_{B_t}(x) < t $$ for an infinite sequence of times $t$ tending to $\infty$ or for all sufficiently large $t\gg 1$?

\item (Quantitative estimates) How fast can $B_t$ shrink so that\footnote{The notation $f_t\ll g_t$ means that for all $t>1$, $f_t \leq c\, g_t$ for some absolute constant $c>0$, and
 we write $ f_t \asymp g_t$ if $f_t\ll g_t$ and $g_t\ll f_t$. We sometimes indicate the dependence of the implied constant in subscripts.}
 $$ \Leb \{0<s<t: \mathcal G^s(x)\in B_t\} \asymp  t  \cdot \m (B_t)$$ for an
 infinite sequence of times $t$ tending to $\infty$,
or for all sufficiently large $t\gg 1$?

\end{enumerate}
 

 In order to address the above questions, we need to impose
  certain regularity conditions on the shrinking targets.  Let  $K< G$  be a maximal compact subgroup and identify $\cM$ with $\Gamma\ba G/K$.
There exists  a one parameter diagonalizable  subgroup $A=\{a_t\}$ so that if $M$ denotes the centralizer of $A$ in $K$, then
 the unit tangent bundle $\T^1(\cM)$ can be identified with $\Gamma\ba G/M$ in the way that
the geodesic flow $\mathcal G^t$ on $\T^1(\cM)$ corresponds to the right translation action of $a_t$ on $\Gamma\ba G/M$. 
We fix $\ell \gg \op{dim}(\cM)$ and
the Sobolev norm $\mathcal S=\mathcal S_{\infty, \ell}$ on $C^\infty(\Gamma\ba G)$ given by
$$\mathcal S(\Psi)=\sum \|D(\Psi)\|_\infty$$ where
the sum is taken over all monomials in a fixed basis of $\op{Lie}(G)$ of order at most $\ell$.

 A family of shrinking targets in  $\T^1(\cM)$ means a collection $\cB=\{B_t\subset \T^1(\cM): t>1\}$ such that
$\m(B_t)>0$,  $B_t\supset B_s$ for $s>t$,  and $\lim_{t\to \infty} \mathsf m(B_t)= 0$.
A family $\{B_t\}$  of shrinking targets is said to be {\it inner regular} (resp. {\it outer regular}) 
if there exist $\alpha>0$ and a family of functions $ \Psi_t^-\in C^\infty(\T^1(\cM))$ (resp. $ \Psi_t^+\in C^\infty(\T^1(\cM)$))  such that
\begin{itemize}
\item  $0\leq \Psi_t^-\le  \Id_{B_t}$ (resp. $\Id_{B_t}\le  \Psi_t^+\ll 1$); 
\item
$\bms(B_t)\ll \bms(\Psi_t^-)$ (resp. $\bms(\Psi_t^+)\ll \bms(B_t)$);
\item $\cS(\Psi_t^\pm)\ll
\bms(B_t)^{-\alpha}$
\end{itemize}
where the implied constants are independent of $t$.
A family $\{B_t\}$ is said to be {\it regular} if it is both inner and outer regular.

 We note that this regularity condition is rather mild, and is satisfied by most families of naturally occurring shrinking targets. 
Such examples include shrinking cusp neighborhoods,  shrinking tubular neighborhoods of a closed geodesic and shrinking metric balls, as will be shown later.

{ In the rest of the introduction, we assume  that $\mathcal B=\{B_t: t\gg 1\}$ is a family of shrinking targets in $\T^1(\cM)$.}


\subsection{Logarithm laws}
{ For discrete time dynamical systems, it is expected that the 
first hitting time would be inversely proportional to the measure of the shrinking target; it is indeed the case for the
discretized geodesic flow. For the continuous geodesic flow,  it turns out that it is inversely proportional  to the measure of a thickened set $\tilde B_t :=\cup_{|s|<1/2} \mathcal G^{s}(B_t)$: 

 \begin{thm}\label{t:loglaw}\label{t6}
\begin{enumerate}
\item If $\{B_t\}$ is inner regular, 
then
$$\lim_{t\to \infty} \frac{\log(\tau^d_{B_t}(x))}{-\log(\bms(B_t))}=1 \quad \mbox{ for $\bms$-a.e. $x\in \T^1(\cM)$}.$$
where $\tau^d_{B}(x)=\min\{n\in \N: \mathcal G^n x\in B\}$. 

\item If  $\{\tilde B_t\}$ is inner regular, then
$$\lim_{t\to \infty} \frac{\log(\tau_{B_t}(x))}{-\log(\m (\tilde{B_t}))}=1 \quad \mbox{ for $\bms$-a.e. $x\in \T^1(\cM)$}. $$
\end{enumerate}
\end{thm}}

\begin{rmk} 
When $|\log \m(\tilde B_t) | \asymp |\log \m (B_t)|$, the first hitting time for the discrete flow  $\{\mathcal G^n: n\in \N\}$ behaves in the same way for the continuous flow.
This is indeed the case for shrinking cusp neighborhoods or tubular neighborhoods of a closed geodesic. However, there are also cases 
when $|\log \m(\tilde B_t)|$ is much larger than $|\log \m (B_t)|$, such as the case of shrinking metric balls.
\end{rmk}

We note that  logarithm laws for the first hitting time were studied for certain families of shrinking targets in many examples of 
discrete time dynamical systems with fast mixing, see e.g. \cite{Galatolo07, Galatolo10, GalatoloNisoli11}.

\subsection{Shrinking rate threshold} 
In order to ensure that a generic orbit $\mathcal G^s(x)$ hits $B_t$ before time $t$  for an infinite sequence of $t$ tending to $\infty$,  the easy half of the Borel-Canteli lemma implies that it is necessary to have
$\sum_k \bms(\tilde{B}_k)=\infty$, from which $\limsup_{t\to \infty} {\log^2(t) t \cdot \bms(\tilde{B}_{t})} =\infty$ follows. The first part of the following theorem says that this condition is also sufficient, up to logarithmic factors.  The second part says that a generic orbit $\mathcal G^s(x)$ hits $B_t$  before time $t$, for all sufficiently large $t$, under a slightly stronger assumption on the rate of shrinking (see Theorem \ref{inl}). 
\begin{thm} \label{t:thresholdC}\label{t7}
Suppose that $\{\tilde B_t\}$ is inner regular.

\begin{enumerate}
\item If  $\limsup_{t\to\infty}\tfrac{t\bms(\tilde{B}_{t})}{|\log(\bms(\tilde{B}_{t}))|}=\infty$, then 
 $$\liminf_{t\to \infty} \frac{\tau_{B_t}(x)}{t}\le 1 \quad\text{for $\bms$-a.e. $x\in \T^1(\cM)$.}$$
 
 \item If 
$\sum_{j=1}^\infty \frac{|\log(\bms(\tilde{B}_{t_j}))|}{t_j\bms(\tilde{B}_{t_j})}<\infty$ for some sequence $t_j\to \infty$,
then
  $$\limsup_{t\to \infty} \frac{\tau_{B_t}(x)}{t}\le 1 \quad\text{ for $\bms$-a.e. $x\in \T^1(\cM)$.}$$
\end{enumerate}
\end{thm}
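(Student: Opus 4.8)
The plan is to reduce both parts to a single second–moment estimate for the occupation functional of the smooth inner approximant $\Psi_t^-$ of $\tilde B_t$, and then to feed it into a Paley–Zygmund inequality for part (1) and a Borel–Cantelli argument for part (2). The starting point is an elementary geometric remark: if $\mathcal G^{s_0}(x)\in\tilde B_t$ for some $s_0>\epsilon_0$, then writing $\mathcal G^{s_0}(x)=\mathcal G^{r}(y)$ with $y\in B_t$, $|r|<\epsilon_0$, we get $\mathcal G^{s_0-r}(x)=y\in B_t$ and $s_0-r>0$, so $\tau_{B_t}(x)<s_0+\epsilon_0$. Hence, setting
\[
G_{t,T}(x):=\int_{\epsilon_0}^{T}\Psi_t^-(\mathcal G^{s}x)\,ds\qquad(T>\epsilon_0),
\]
and using $0\le\Psi_t^-\le\Id_{\tilde B_t}$, we have the inclusion $\{x:G_{t,T}(x)>0\}\subseteq\{x:\tau_{B_t}(x)<T+\epsilon_0\}$, so it suffices to bound $\m(G_{t,T}>0)$ from below for an appropriate $T$ comparable to $t$.

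Next I would estimate the first two moments of $G_{t,T}$ against $\m$. By $\mathcal G$–invariance of $\m$, $\m(G_{t,T})=(T-\epsilon_0)\,\m(\Psi_t^-)\asymp T\,\m(\tilde B_t)$ by inner regularity, and
\[
\m(G_{t,T}^2)=\int_{-(T-\epsilon_0)}^{\,T-\epsilon_0}\big((T-\epsilon_0)-|u|\big)\,I_t(u)\,du,\qquad I_t(u):=\m\big(\Psi_t^-\cdot(\Psi_t^-\circ\mathcal G^{|u|})\big).
\]
The key input is exponential mixing: applied to $\Psi_t^-$ and combined with $\mathcal S(\Psi_t^-)\le c\,\m(\tilde B_t)^{-\alpha}$ it gives $|I_t(u)-\m(\Psi_t^-)^2|\le Ce^{-\kappa|u|}\m(\tilde B_t)^{-2\alpha}$, while trivially $I_t(u)\le\m(\Psi_t^-)$. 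Splitting the $u$–integral at the crossover $u_t\asymp|\log\m(\tilde B_t)|$ of these two bounds and isolating the exact main term $\m(\Psi_t^-)^2(T-\epsilon_0)^2=\m(G_{t,T})^2$, one obtains
\[
\mathrm{Var}_{\m}(G_{t,T})\ \lesssim\ T\,|\log\m(\tilde B_t)|\,\m(\tilde B_t),\qquad\text{equivalently}\qquad \frac{\mathrm{Var}_{\m}(G_{t,T})}{\m(G_{t,T})^2}\ \lesssim\ \frac{|\log\m(\tilde B_t)|}{T\,\m(\tilde B_t)}.
\]
This is the heart of the argument; exponential (as opposed to merely polynomial) mixing is essential here, since it makes the decorrelation time $u_t$ only logarithmic in $\m(\tilde B_t)^{-1}$, so that $G_{t,T}$ concentrates around its mean precisely when $T\,\m(\tilde B_t)\gg|\log\m(\tilde B_t)|$.

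For part (1): by hypothesis pick $t_j\to\infty$ with $\rho_j:=t_j\m(\tilde B_{t_j})/|\log\m(\tilde B_{t_j})|\to\infty$ and take $T=t_j$. Then the variance bound and Cauchy–Schwarz give $\m(G_{t_j,t_j}>0)\ge\m(G_{t_j,t_j})^2/\m(G_{t_j,t_j}^2)\ge(1+O(\rho_j^{-1}))^{-1}\to1$. Since a sequence of sets whose measures tend to $1$ has $\limsup$ of full $\m$–measure, and $\{G_{t_j,t_j}>0\}\subseteq\{\tau_{B_{t_j}}<t_j+\epsilon_0\}$, for $\m$–a.e. $x$ we have $\tau_{B_{t_j}}(x)<t_j+\epsilon_0$ for infinitely many $j$, and therefore $\liminf_{t\to\infty}\tau_{B_t}(x)/t\le\liminf_j\tau_{B_{t_j}}(x)/t_j\le1$.

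For part (2): fix $\eta>0$ and take $T=\eta t_j$. Chebyshev's inequality together with the variance bound gives $\m(G_{t_j,\eta t_j}=0)\le\mathrm{Var}_{\m}(G_{t_j,\eta t_j})/\m(G_{t_j,\eta t_j})^2\lesssim\eta^{-1}\,|\log\m(\tilde B_{t_j})|/\big(t_j\m(\tilde B_{t_j})\big)$, which is $\eta^{-1}$ times the sequence assumed summable. Hence $\sum_j\m(G_{t_j,\eta t_j}=0)<\infty$, and Borel–Cantelli yields, for $\m$–a.e. $x$, that $\tau_{B_{t_j}}(x)<\eta t_j+\epsilon_0$ for all large $j$. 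I would then upgrade this to a statement about all $t\to\infty$ using the monotonicity of $t\mapsto\tau_{B_t}(x)$ (which holds because $B_s\subset B_t$ for $s>t$): for $t$ lying between consecutive terms $t_{j-1}\le t\le t_j$ one has $\tau_{B_t}(x)\le\tau_{B_{t_j}}(x)<\eta t_j+\epsilon_0$. The delicate point here — which I expect to be the main remaining obstacle, of a technical rather than conceptual nature — is that to turn this into $\limsup_{t\to\infty}\tau_{B_t}(x)/t\le(1+o(1))\eta$ one needs the witnessing sequence to be, or to be refinable to, one with bounded ratios $t_j/t_{j-1}$ (equivalently, with the summability in the hypothesis surviving along a geometric net); this is the step where the regularity of the family $\cB$ must be used. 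Granting it, letting $\eta=1/m\to0$ and intersecting the corresponding full–measure sets gives $\limsup_{t\to\infty}\tau_{B_t}(x)/t\le1$ for $\m$–a.e. $x$.
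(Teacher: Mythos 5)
Your strategy is essentially the paper's, implemented directly in continuous time rather than after discretizing. The variance bound $\mathrm{Var}_{\m}(G_{t,T})\lesssim T\,|\log\m(\tilde B_t)|\,\m(\tilde B_t)$ you derive is precisely the paper's effective mean ergodic theorem (Theorem \ref{mean}) normalized by $T^2$, obtained from the same two inputs: exponential mixing (Theorem \ref{t:mmix}) and the inner-regularity bound $\cS(\Psi^-_t)\le c\,\m(\tilde B_t)^{-\alpha}$. The paper proves the variance bound for the discrete average $\lambda_T$, passes through $\cC^o_{T,\Psi}\subseteq\cC_{T,\Psi}$ (Proposition \ref{p:mCf} and Lemma \ref{l:Cfk}), and then transfers to continuous time using $|\tau_B(x)-\tau^d_{\tilde B}(x)|\le 1/2$; you skip the detour and use the continuous occupation functional directly. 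For part (1) the Cauchy--Schwarz/reverse-Fatou argument you give is a correct reformulation of the paper's $L^2$ convergence $\lambda_{t_j}(\tilde\Psi_{t_j})\to 1$ in Proposition \ref{p:HSS}, and that part is complete.

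The gap you flag in part (2) is real, but its resolution is not ``regularity of $\cB$'': inner/outer regularity controls approximants and Sobolev norms, and says nothing about the ratios $t_j/t_{j-1}$. Nor does the summability hypothesis force those ratios to be bounded or the witnessing sequence to be refinable to a geometric one. The fix, which is what the paper's discrete-time Theorem \ref{t:thresholdD} actually does, is to choose the time window in the Chebyshev/Borel--Cantelli step to be the \emph{previous} term of the sequence rather than $\eta t_j$: bound $\m(G_{t_j,\,t_{j-1}}=0)\lesssim |\log\m(\tilde B_{t_j})|\,/\,\bigl(t_{j-1}\,\m(\tilde B_{t_j})\bigr)$ and sum over $j$. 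If this series converges, then for a.e.\ $x$ and all large $j$ one has $\tau_{B_{t_j}}(x)<t_{j-1}+\epsilon_0$, so for every $t\in[t_{j-1},t_j]$ the monotonicity $B_{t_j}\subset B_t$ gives $\tau_{B_t}(x)\le\tau_{B_{t_j}}(x)<t+\epsilon_0$; thus $\limsup_{t\to\infty}\tau_{B_t}(x)/t\le 1$ with no ratio assumption and no need to send $\eta\to 0$. Note this requires $t_{j-1}$ (not $t_j$) in the denominator of the summability hypothesis; the introduction's statement with $t_j$ is a looser paraphrase, and the precise versions proved are Theorem \ref{t:thresholdD} (arbitrary $t_j$, denominator $t_{j-1}$) and Theorem \ref{inl}(2) (the dyadic choice $t_j=2^j$, for which $t_{j-1}\asymp t_j$ and the distinction disappears).
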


\subsection{Quantitative estimates}
In order to answer a more refined question regarding the amount of time that a generic geodesic ray spends in a shrinking target,
we require our family of targets to be regular and  their measures do not change too fast in the sense that $\bms(B_t)\asymp \bms(B_{2t})$.

With these additional regularity assumptions, we have the following (see Theorem \ref{qua} below for a more general result).

\begin{thm}\label{t:asymp}\label{t8}
Suppose that $\{B_t\}$ is regular and that $\m (B_{2t})\asymp \m (B_t)$.
\begin{enumerate}
\item If $\limsup_{t\to\infty} \frac{t\bms(B_{t})}{|\log(\bms(B_{t}))|}= \infty$, then there exists a sequence $t_k\to \infty$ such that for $\bms$-a.e. $x$,
$$ \frac{\Leb \{0 < s< t_k:\mathcal G^s(x) \in B_{t_k}\} }{t_k}\asymp \bms(B_{t_k}).$$

\item If
$\sum_{j=1}^\infty \tfrac{|\log(\bms(B_{2^j}))|}{2^j\bms(B_{2^j})}<\infty ,$
then for $\bms$-a.e. $x$, 
$$ \frac{\Leb \{0<s<t : \mathcal G^s(x) \in B_{t}\}}{t}\asymp \bms(B_t).$$
\end{enumerate}
\end{thm}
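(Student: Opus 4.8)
The plan is to run a second-moment argument for the ergodic integrals $S_T^\phi(x):=\int_0^T\phi(\mathcal G^s x)\,ds$ applied to the smooth approximants furnished by the regularity hypothesis, and then to pass from $L^2$-concentration to almost-sure statements via Chebyshev's inequality and the Borel--Cantelli lemma. First, since $\Leb\{0<s<T:\mathcal G^sx\in B_t\}=S_T^{\Id_{B_t}}(x)$ and $0\le\Psi_t^-\le\Id_{B_t}\le\Psi_t^+$ with $\m(\Psi_t^\pm)\asymp\m(B_t)$, it suffices to estimate $S_T^{\Psi_t^\pm}(x)$; and since $\m$ is $\mathcal G^s$-invariant we have $\int S_T^\phi\,d\m=T\,\m(\phi)$, so what is needed is concentration of $S_T^\phi$ about $T\,\m(\phi)$.

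The heart of the matter is a variance bound. Using invariance, $\operatorname{Var}(S_T^\phi)=2\int_0^T(T-\tau)\big(\m(\phi\cdot\phi\circ\mathcal G^\tau)-\m(\phi)^2\big)\,d\tau$, and we split the integral at a threshold $\tau_0$. For $\tau\ge\tau_0$ we invoke exponential mixing, $|\m(\phi\cdot\phi\circ\mathcal G^\tau)-\m(\phi)^2|\le Ce^{-\kappa\tau}\cS(\phi)^2\le Ce^{-\kappa\tau}\m(B_t)^{-2\alpha}$ by the regularity bound on the Sobolev norm, and choose $\tau_0=\tfrac{2\alpha+1}{\kappa}|\log\m(B_t)|$ so that this range contributes only $O(T\,\m(B_t))$. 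For $0\le\tau<\tau_0$ we use the trivial bound $|\m(\phi\cdot\phi\circ\mathcal G^\tau)-\m(\phi)^2|\le\|\phi\|_\infty\m(\phi)+\m(\phi)^2\ll\m(B_t)$, which contributes $O(T\tau_0\,\m(B_t))=O(T|\log\m(B_t)|\,\m(B_t))$. Hence $\operatorname{Var}(S_T^\phi)\ll T|\log\m(B_t)|\,\m(B_t)$, and Chebyshev gives $\m\{x:S_T^\phi(x)\notin(\tfrac12 T\m(\phi),\tfrac32 T\m(\phi))\}\ll \tfrac{|\log\m(B_t)|}{T\,\m(B_t)}$.

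For part (1), apply this with $T=t=t_k$ and $\phi=\Psi_{t_k}^\pm$. Since $\liminf_t\tfrac{\log\m(B_t)}{t\m(B_t)}=0$, we may choose an increasing sequence $t_k$ with $\tfrac{|\log\m(B_{t_k})|}{t_k\m(B_{t_k})}<2^{-k}$; then the exceptional probabilities are summable, so by Borel--Cantelli, for $\m$-a.e.\ $x$ and all large $k$ we have $\tfrac12 t_k\m(\Psi_{t_k}^-)\le S_{t_k}^{\Psi_{t_k}^-}(x)$ and $S_{t_k}^{\Psi_{t_k}^+}(x)\le\tfrac32 t_k\m(\Psi_{t_k}^+)$, whence $\Leb\{0<s<t_k:\mathcal G^sx\in B_{t_k}\}\asymp t_k\m(B_{t_k})$. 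For part (2) we use a dyadic pigeonhole: for $t\in[2^j,2^{j+1}]$ the monotonicity of the family gives $B_{2^{j+1}}\subseteq B_t\subseteq B_{2^j}$, hence $S_{2^j}^{\Psi_{2^{j+1}}^-}(x)\le\Leb\{0<s<t:\mathcal G^sx\in B_t\}\le S_{2^{j+1}}^{\Psi_{2^j}^+}(x)$. Applying the variance estimate with $(T,\phi)=(2^j,\Psi_{2^{j+1}}^-)$ and with $(2^{j+1},\Psi_{2^j}^+)$, and using the doubling hypothesis $\m(B_{2t})\asymp\m(B_t)$ to replace $\m(B_{2^{j\pm1}})$ by $\m(B_{2^j})$, the probability that either sandwiching bound fails at level $j$ is $\ll\tfrac{|\log\m(B_{2^j})|}{2^j\m(B_{2^j})}$; these are summable by hypothesis, so Borel--Cantelli yields that for $\m$-a.e.\ $x$ there is $j_0(x)$ with $\Leb\{0<s<t:\mathcal G^sx\in B_t\}\asymp 2^j\m(B_{2^j})\asymp t\,\m(B_t)$ for all $t\in[2^j,2^{j+1}]$ and all $j\ge j_0(x)$, i.e.\ for all $t\gg1$.

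The step I expect to be the main obstacle is the variance bound: exponential mixing handles the off-diagonal part optimally, but the near-diagonal window $|\tau|<\tau_0$ admits only the trivial estimate $\m(\phi\cdot\phi\circ\mathcal G^\tau)\ll\m(B_t)$, and because $\tau_0$ must be taken of size $\asymp|\log\m(B_t)|$ in order to absorb the Sobolev-norm growth $\cS(\Psi_t^\pm)\ll\m(B_t)^{-\alpha}$, one necessarily pays the logarithmic factor that is mirrored in the hypotheses of both parts (and whose necessity, up to such factors, is discussed in the remarks preceding the statement). A secondary point requiring care is that the dyadic sandwiching in part (2) must be uniform over the whole block $t\in[2^j,2^{j+1}]$, which is exactly why one needs both the monotonicity $B_s\subseteq B_t$ for $s\ge t$ and the doubling condition $\m(B_{2t})\asymp\m(B_t)$.
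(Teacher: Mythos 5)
Your proposal is correct and follows essentially the same route as the paper. Your variance computation for $S_T^{\phi}$ — expanding $\operatorname{Var}(S_T^\phi)=2\int_0^T(T-\tau)\big(\m(\phi\cdot\phi\circ\mathcal G^\tau)-\m(\phi)^2\big)d\tau$, splitting at a threshold $\tau_0\asymp|\log\m(B_t)|$, using exponential mixing beyond $\tau_0$ and the trivial bound inside — is precisely the paper's proof of its effective mean ergodic theorem (Theorem 4.2), restated for $S_T=T\lambda_T$; your Chebyshev step reproduces Proposition 4.3, and your explicit dyadic sandwiching with Borel--Cantelli reproduces what the paper imports as Lemma 4.4 (parts (2) and (3), adapted from Kelmer). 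The only cosmetic difference is in part (1): the paper deduces $\lambda_{t_j}(\tilde\Psi_{t_j})\to 1$ in $L^2$ along a subsequence with $\frac{|\log\m(B_{t_j})|}{t_j\m(B_{t_j})}\to 0$ and then passes to a further a.e.\ convergent subsequence, whereas you choose the subsequence so that the Chebyshev bounds are geometrically summable and apply Borel--Cantelli directly; these are interchangeable.
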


We observe that unlike Theorems \ref{t6} and \ref{t7}, the amount of time that the geodesic flow spends in the targets is governed by the measure of the original targets rather than by their thickenings.

\begin{rem}\begin{enumerate}
\item We note that in many examples the measure of the shrinking targets decay like $\m(B_t)\asymp t^{-\eta}$ for some $\eta>0$. In such cases, we have $\m(B_t)\asymp \m(B_{2t})$ and the rest of the conditions of Theorems \ref{t7} and \ref{t8} are satisfied if $\eta<1$.

\item As mentioned before, the extra conditions on the rate of decay we have in Theorems \ref{t7} and \ref{t8} are sharp, but up to logarithmic factors. While it would be very interesting to have sharp conditions on the nose, we note that such a result is notoriously hard. 
Even when $\cM$ has finite volume, sharp results regarding Theorem \ref{t8}(1) are  known only in some very special cases when the shrinking targets are cusp neighborhoods \cite{Sul82}, or spherical balls \cite{Mau06} (or general spherical targets if one considers discrete time dynamics \cite{Kel17}). There are no known sharp results regarding Theorem \ref{t8}(2). We refer to \cite{KKR19} where this kind of problem is studied for systems with almost perfect mixing. 

\item All the results described above still hold as stated if we replace the unit tangent bundle $\T^1(\cM)$ with the frame bundle  $\G\bk G$,
provided  $\delta>n-2$. 
We note if $\cM$ contains a co-dimension one properly immersed totally geodesic sub-manifold of finite volume, then $\delta>n-2$, so this stronger condition still holds in many examples.
\end{enumerate}
\end{rem}

For some concrete applications of these results, we discuss three families of shrinking targets to which our theorems apply. In order to define these families, we fix a left $G$-invariant and right $K$-invariant metric $d$ on $G$ which descends to the hyperbolic metric on $\bH^n=G/K$. This metric then naturally defines a distance function, $\dist(\cdot,\cdot)$ on $\T^1(\cM)=\G\bk G/M$.

 \subsection{Cusp excursion}
 The convex core of $\cM$ is defined by $\op{core}(\cM)=\Gamma\ba \op{hull}(\Lambda)$, where $\op{hull}(\Lambda)$ defines the convex hull of the limit set $\Lambda$.
As $\cM$ is geometrically finite, there are finitely many disjoint cuspidal regions 
whose complement in $\op{core}(\cM)$
is a compact submanifold.   Let $\mathfrak h_i$, $1\le i\le k$, denote the pre-images in $\T^1(\cM)$ of these cuspidal regions under the base point projection $\pi: \T^1(\cM)\to \cM$.
For each $i$, we denote by $\kappa_i$ the rank of $\mathfrak h_i$, that is, the rank of the
maximal free abelian subgroup of  the stabilizer $\op{Stab}_\Gamma(\mathfrak h_i)$. It is known that $ \kappa_i <2\delta$.

For each $i$ and $t> 1$, consider the following cusp neighborhood \begin{equation}\label{e:cuspnhood}
\mathfrak h_{i,t}:=\{x\in \mathfrak h_i: \dist(x, \partial \mathfrak h_i )>t\}.\end{equation}

For each $i$, we show that the shrinking family  $\{\mathfrak h_{i,t}: t>1\}$
is regular and that \begin{equation}\label{e:cuspmeasure}
\bms(\fh_{i,t})\asymp e^{-(2\delta-\kappa_i)t}.
\end{equation}  
(see section \ref{s:cusp}).
Applying our results to this family, we get the following:
 \begin{thm} \label{t1} Fix $1\le i\le k$.  
\begin{enumerate} 
\item   For $\bms$-a.e. $x\in \T^1(\cM)$, 
 \begin{equation*} \lim_{t\to \infty} 
 \frac{\log \tau_{\mathfrak h_{i,t}}(x)}{ t} ={2 \delta -\kappa_i}. \end{equation*}
\item For any $0<\eta<\frac{1}{2\delta -\kappa_i}$, 
and for $\bms$-a.e. $x\in \T^1(\cM)$, 
$$\Leb\{0<s<t: \mathcal G^s(x)\in \mathfrak h_{i,\eta \log t} \} \asymp t^{1-\eta(2\delta -\kappa_i)} .$$
\end{enumerate} 
 \end{thm}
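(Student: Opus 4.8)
The plan is to deduce Theorem~\ref{t1} by feeding the family of shrinking cusp neighborhoods $\{\mathfrak h_{i,t}\}$ into the general results of the introduction: the logarithm law of Theorem~\ref{t6} for part~(1), and the quantitative estimate of Theorem~\ref{t8} for part~(2). The two structural inputs I will use are precisely those asserted in Section~\ref{s:cusp}, namely that $\{\mathfrak h_{i,t}\}$ is a regular family and that $\bms(\mathfrak h_{i,t})\asymp e^{-(2\delta-\kappa_i)t}$ for $t\gg1$. Beyond these I only need to control the thickened sets $\tilde{\mathfrak h}_{i,t}=\cup_{|s|<\e_0}\mathcal G^s(\mathfrak h_{i,t})$: since $\dist(\cdot,\partial\mathfrak h_i)$ is $1$-Lipschitz along the unit-speed geodesic flow we have $\mathfrak h_{i,t}\subseteq\tilde{\mathfrak h}_{i,t}\subseteq\mathfrak h_{i,t-\e_0}$, so that by \eqref{e:cuspmeasure}
\[
\bms(\tilde{\mathfrak h}_{i,t})\asymp e^{-(2\delta-\kappa_i)t}\asymp\bms(\mathfrak h_{i,t}),\qquad \bms(\mathfrak h_{i,2t})\asymp\bms(\mathfrak h_{i,t});
\]
moreover, sandwiching $\tilde{\mathfrak h}_{i,t}$ between $\mathfrak h_{i,t}$ and $\mathfrak h_{i,t-\e_0}$ and using the inner regularizing functions of $\{\mathfrak h_{i,t}\}$ itself, one sees that $\{\tilde{\mathfrak h}_{i,t}\}$ is again inner regular with the same exponent $\alpha$.

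For part~(1) I apply Theorem~\ref{t6} with $B_t=\mathfrak h_{i,t}$: for $\bms$-a.e.\ $x\in\T^1(\cM)$,
\[
\lim_{t\to\infty}\frac{\log\tau_{\mathfrak h_{i,t}}(x)}{-\log\bms(\tilde{\mathfrak h}_{i,t})}=1 .
\]
Since $-\log\bms(\tilde{\mathfrak h}_{i,t})=(2\delta-\kappa_i)t+O(1)$, dividing numerator and denominator by $t$ yields $\lim_{t\to\infty}\log\tau_{\mathfrak h_{i,t}}(x)/t=2\delta-\kappa_i$, which is part~(1).

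For part~(2), fix $0<\eta<\tfrac1{2\delta-\kappa_i}$ and set $B_t:=\mathfrak h_{i,\eta\log t}$. As $t\mapsto\eta\log t$ is increasing, $\{B_t\}$ is a nested shrinking family, and its inner/outer regularizing functions are those of $\{\mathfrak h_{i,s}\}$ evaluated at $s=\eta\log t$; since $\bms(B_t)\asymp\bms(\mathfrak h_{i,\eta\log t})$, the Sobolev bound $\cS(\Psi^\pm_{\eta\log t})\le c\,\bms(\mathfrak h_{i,\eta\log t})^{-\alpha}$ takes the required form $c'\bms(B_t)^{-\alpha}$, so $\{B_t\}$ is regular. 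By \eqref{e:cuspmeasure}, $\bms(B_t)\asymp t^{-\eta(2\delta-\kappa_i)}$, hence $\bms(B_{2t})\asymp\bms(B_t)$, and
\[
\sum_{j\ge1}\frac{|\log\bms(B_{2^j})|}{2^j\bms(B_{2^j})}\asymp\sum_{j\ge1}\frac{j}{2^{\,j(1-\eta(2\delta-\kappa_i))}}<\infty,
\]
since $\eta(2\delta-\kappa_i)<1$. Theorem~\ref{t8}(2) then gives, for $\bms$-a.e.\ $x$ and all $t\gg1$, $\Leb\{0<s<t:\mathcal G^s(x)\in\mathfrak h_{i,\eta\log t}\}\asymp t\cdot\bms(B_t)\asymp t^{1-\eta(2\delta-\kappa_i)}$, as asserted.

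Granting the two inputs from Section~\ref{s:cusp}, this deduction involves no real obstacle; it is bookkeeping with the explicit asymptotic \eqref{e:cuspmeasure}, the only point to watch being that thickening and the logarithmic reparametrization $t\mapsto\eta\log t$ preserve the regularity hypotheses and alter $\bms$ by only bounded factors. The substantive work lies rather in Section~\ref{s:cusp}, where \eqref{e:cuspmeasure} is derived from the behaviour of the Bowen--Margulis--Sullivan measure near a cusp (the shadow lemma and the structure of bounded parabolic points) and the functions $\Psi^\pm_t$ are produced by smoothing $\Id_{\mathfrak h_{i,t}}$. Finally, since $\bms(\tilde{\mathfrak h}_{i,t})\asymp\bms(\mathfrak h_{i,t})$, part~(1) holds verbatim with $\tau$ for the discrete-time flow $\{\mathcal G^n:n\in\N\}$ as well.
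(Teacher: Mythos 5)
Your proof is correct and follows essentially the same route as the paper: both parts are deduced by plugging $B_t=\mathfrak h_{i,t}$ (resp.\ $B_t=\mathfrak h_{i,\eta\log t}$) into Theorem~\ref{t6} (resp.\ Theorem~\ref{t8}(2)), using the regularity and the measure asymptotic $\bms(\mathfrak h_{i,t})\asymp\bms(\tilde{\mathfrak h}_{i,t})\asymp e^{-(2\delta-\kappa_i)t}$ established in Section~\ref{s:cusp}. Your write-up is in fact slightly more careful on two small points that the paper glosses over or mis-types: you correctly identify the summand as $j/2^{cj}$ rather than $(\log j)/2^{cj}$, and you spell out why the reparametrization $t\mapsto\eta\log t$ preserves regularity.
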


{
\begin{rem} As mentioned before, it is not hard to show that 
\begin{equation}\label{sl} \liminf_{t\to \infty} \frac{\log (\tau_{\fh_{t}}(x))}{t}=\left(\limsup_{t\to\infty}\frac{\dist(\mathcal G^t(x), o)}{\log t }\right)^{-1}\end{equation}
where $\fh_t=\bigcup_{1\le i\le k} \fh_{i,t}$.
Stratmann and Velani  showed that \eqref{sl} is equal to
$2\delta-\max_i \kappa_i$ \cite{SV95}, and hence extended Sullivan's logarithm law \eqref{c1} to geometrically finite manifolds.
 Theorem \ref{t1}(1) presents a stronger version, as 
 we consider excursion to individual cusps as well as obtain an actual limit rather than $\lim\inf$. 
\end{rem}}

For the sake of a concrete application, we give a reformulation of Theorem \ref{t1}(1) in the case of Apollonian manifolds.
An Apollonian gasket $\P=\bigcup C_i$ is a countable union of circles obtained by repeatedly inscribing circles into the triangular interstices of four mutually
tangent circles with disjoint interiors in the complex plane (where lines are considered as circles). The symmetry group
$\{g\in \PSL_2(\bc): g(\P)=\P\}$ is a discrete subgroup of $\PSL_2(\bc)$ which acts on $\hat \bc$ by M\"obius transformations and its torsion-free subgroup of finite index is called an Apollonian group, which we denote by $\Gamma$.  Via the Poincar\'e extension theorem, we can identify $\PSL_2(\bc)$ with $\op{Isom}^+(\bH^3)$
for the upper-half space model $\bH^3$ of the hyperbolic space. The quotient manifold $\Gamma\ba \bH^3$ is called an Apollonian manifold,
which is known to be geometrically finite with all cusps having rank one. Its limit set is equal to
the closure $\overline{\P}$, and supports a locally finite Hausdorff measure $\mathcal H$ of  dimension $\delta=1.30568(8)$ \cite{Mc}. 

Fix a tangent point $\xi= C_i\cap C_j$ for $i\ne j$ and consider a sufficiently small Euclidean ball $B$ in $\bH^3$ based at $\xi$,
so that $\mathcal B=\Gamma (B)$ is a disjoint collection of Euclidean balls.

Fix $o\in \bH^3$ outside of $B$,
let $B(t)\subset B$ be the Euclidean ball based at $\xi$ and $d_{\bH^3}(o, B(t))= d_{\bH^3}(o, B)+t$. Set $\mathcal B_t:= \Gamma (B(t))$.

The following is a consequence of Theorem \ref{t1}:
\begin{cor} Let $\P$ be an Apollonian gasket. For $\mathcal H$-almost all initial  direction $v$ toward $\overline \P$,
\begin{equation}\label{c3} \lim_{t\to \infty} 
\frac{\log  ({\inf \{s>0: v_s\in \mathcal B_t\} )} }{ t} =2\delta -1 (=1.6113... )
 \end{equation}
where $v_s$ denotes the base point of $\mathcal G^s(v)$.
\end{cor}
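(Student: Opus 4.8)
The plan is to recognize the family $\{\mathcal B_t\}$ as a shrinking cusp neighborhood of the cusp at $\xi$, up to a bounded shift of the depth parameter, apply \thmref{t1}(1), and then transfer the resulting $\m$-almost-everywhere statement into one about $\mathcal H$-almost every direction. \emph{Geometric identification.} The tangent point $\xi=C_i\cap C_j$ is a parabolic fixed point of the Apollonian group $\Gamma$, and since Apollonian manifolds have only rank-one cusps, among $\mathfrak h_1,\dots,\mathfrak h_k$ it determines one cusp, say $\mathfrak h_{i_0}$, with $\kappa_{i_0}=1$. In the upper half-space model a Euclidean ball tangent to $\hat\bc$ at $\xi$ is a horoball based at $\xi$, and the hypothesis that $\mathcal B=\Gamma(B)$ is a disjoint collection of Euclidean balls is exactly the statement that $B$ descends to an embedded cuspidal neighborhood of $\xi$ in $\cM$. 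Since $d_{\bH^3}(o,B(t))=d_{\bH^3}(o,B)+t$, the ball $B(t)$ is the sub-horoball at depth $t$, so the preimage under $\pi\colon\T^1(\cM)\to\cM$ of the image of $B(t)$ is a cuspidal neighborhood of the same cusp that coincides with $\mathfrak h_{i_0,t}$ after replacing $t$ by $t+O(1)$ (the two chosen cuspidal neighborhoods differ by a bounded depth and the $M$-fibers have bounded diameter). As $\mathcal B_t=\Gamma(B(t))$ is $\Gamma$-invariant, the condition $v_s\in\mathcal B_t$ depends only on the image $\bar v\in\T^1(\cM)$ of $v$, and unwinding definitions,
\[
\inf\{s>0:v_s\in\mathcal B_t\}=\tau_{\mathfrak h_{i_0,\,t+O(1)}}(\bar v).
\]

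\emph{Applying \thmref{t1}.} The Apollonian group is Zariski dense and finitely generated with connected limit set, so for $n=3$ we have $\delta>1=\max\{\tfrac{n-1}{2},n-2\}$ and the standing hypotheses apply; the family $\{\mathfrak h_{i_0,t}\}$ is regular (as shown in the cusp section). The limit in \thmref{t1}(1) equals $2\delta-\kappa_{i_0}=2\delta-1$ and is unchanged when $t$ is replaced by $t+O(1)$, so
\[
\lim_{t\to\infty}\frac{\log\inf\{s>0:v_s\in\mathcal B_t\}}{t}=2\delta-1\qquad\text{for }\m\text{-a.e. }\bar v\in\T^1(\cM).
\]

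\emph{Transfer to $\mathcal H$-almost every direction.} We identify an initial direction at $o$ toward $\overline\P=\Lambda$ with its forward endpoint $\eta\in\Lambda$, the corresponding vector being $v(o,\eta)$. Two inputs are used: (i) the $\delta$-dimensional Hausdorff measure $\mathcal H$ on $\Lambda$ and the Patterson--Sullivan measure $\mu_o$ are mutually absolutely continuous by geometric finiteness, so ``$\mathcal H$-a.e.'' and ``$\mu_o$-a.e.'' coincide; (ii) whenever $\tau_{\mathfrak h_{i_0,t}}(x)\to\infty$ the limit above is a forward-asymptotic (strong-stable) invariant: two forward-asymptotic vectors are synchronized by a finite time-shift and an exponentially small transverse error, which perturb $\log\tau_{\mathfrak h_{i_0,t}}$ only by amounts that vanish after dividing by $t$. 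Now let $E\subset\T^1(\cM)$ be the $\m$-conull set where the limit equals $2\delta-1$; it is flow-invariant, so working in Hopf coordinates $(v^-,v^+,t)$ in which $d\tilde\m\propto|v^--v^+|^{-2\delta}\,d\mu_o(v^-)\,d\mu_o(v^+)\,dt$ and applying Fubini twice, one gets that for $\mu_o$-a.e. $\eta=v^+\in\Lambda$ there is $v^-\in\Lambda$ with the whole geodesic $\gamma_{v^-,\eta}$ in the lift of $E$. By (ii) the vector $v(o,\eta)$, being forward asymptotic to $\gamma_{v^-,\eta}$, also realizes the limit $2\delta-1$, and by (i) this holds for $\mathcal H$-a.e. $\eta$, i.e. for $\mathcal H$-a.e. initial direction toward $\overline\P$; numerically $2\delta-1=1.61137(6)$.

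The only genuinely non-formal step is the last one: the measure on directions induced from $\mathcal H$ at the fixed point $o$ is mutually singular with $\m$, so an $\m$-almost-everywhere statement cannot be transferred naively; the way around this is that \eqref{c3} is constant along strong-stable leaves, which in turn rests on $\tau_{\mathfrak h_{i_0,t}}\to\infty$. Everything else — parabolicity of tangent points, the identification of boundary-tangent Euclidean balls with horoballs, and the bounded depth shift — is routine.
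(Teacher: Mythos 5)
Your proof is correct, and in fact supplies exactly the argument that the paper leaves implicit: the authors state the corollary as ``a consequence of Theorem~\ref{t1}'' with no further proof, so the only genuine work is the transfer from the $\m$-a.e.\ conclusion of Theorem~\ref{t1}(1) to an $\mathcal H$-a.e.\ statement about directions based at the fixed point $o$. You correctly diagnose why a naive transfer fails (the spherical section at $o$ is $\m$-null) and correctly repair it by noting that the first-hitting exponent is a strong-stable invariant once hitting times tend to infinity (which holds because $2\delta-1>0$), after which the Hopf/Fubini argument in product coordinates does the job; this is the right and, as far as I can see, the intended route.

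Two small refinements worth making explicit. First, the mutual absolute continuity of the Hausdorff measure $\mathcal H^\delta$ and the Patterson--Sullivan density $\nu_o$ is not a consequence of geometric finiteness alone; it also requires $\delta>\kappa_{\max}$ (Stratmann--Velani, see \cite{SV95}), which here reads $\delta>1$ and is exactly the numerical fact the paper records ($\delta=1.30568\ldots$), so you should say so rather than attribute it to geometric finiteness in general. Second, your claim~(ii) deserves one more sentence to make the ``vanishing after dividing by $t$'' precise: if $w$ is forward-asymptotic to a generic $v$ after a bounded time shift, then at time $s=\tau_{\fh_{i_0,t}}(v)$ one has $\mathcal G^{s}(w)\in\fh_{i_0,t-\eta(s)}$ with $\eta(s)\to 0$; since $\tau_{\fh_{i_0,t}}(v)\to\infty$ as $t\to\infty$ (because the limiting exponent is positive), the defect $\eta$ tends to $0$, and the equality of the two $\limsup$'s and $\liminf$'s follows by passing to the parameter $t'=t-\eta$. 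Neither point is a gap, but both should be spelled out in a final write-up. The rest --- parabolicity of tangent points, identification of $\Gamma(B(t))$ with $\fh_{i_0,t+O(1)}$ via the definition \eqref{hii}, and $\kappa_{i_0}=1$ --- is routine and stated correctly.
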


\subsection{Tubular neighborhoods}
Another natural family of shrinking targets is given by tubular neighborhoods of a closed geodesic. 
 For a closed geodesic $\mathcal C\subset \T^1(\cM)$ and $\e>0$, we consider the $\e$-tubular neighborhood of $\mathcal C$:
  $$\mathcal C_\e:=\left\{x\in \T^1(\cM): \dist(x, \mathcal C)\le \e\right\}.$$
The family $\{\mathcal C_{1/t}: t> 1\}$ forms a family of shrinking neighborhoods of $\mathcal C$.
We show that $\{\cC_{1/t}: t> 1\}$ is a regular family with $\bms(\cC_{1/t})\asymp \bms(\tilde \cC_{1/t})\asymp t^{-2\delta}$.
 Applying our results to this family of shrinking targets gives the following result on the amount of time a generic geodesic spirals near a fixed closed geodesic (cf. \cite[Theorem 1.1]{HP10} for a similar result in a negatively curved compact manifold).

 \begin{thm} \label{t55} Let $\mathcal C\subset \T^1(\cM)$ be a closed geodesic.  Then for $\mathsf m$-a.e. $x\in \T^1(\cM)$,
we have the following:

\begin{enumerate}
\item  \begin{equation*} 
\lim_{t\to \infty} \frac{\log \tau_{\mathcal C_{1/t}}(x) }{\log t} = {2 \delta}; \end{equation*}
 
 \item  For any $0<\eta<\tfrac{1}{2\delta}$ and for all $t>1$,
   $$ \Leb\{0< s <t: \dist(\mathcal G^s(x),\cC)<t^{-\eta}\} \;\; \asymp \;\;   t^{1-2\delta \eta}.$$
\end{enumerate}
 \end{thm}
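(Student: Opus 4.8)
The plan is to deduce Theorem~\ref{t55} by verifying that the family $\{\cC_{1/t}\}$ satisfies the hypotheses of the general Theorems~\ref{t6}, \ref{t7} and \ref{t8}, and then feeding in the measure asymptotics $\bms(\cC_{1/t})\asymp \bms(\tilde\cC_{1/t})\asymp t^{-2\delta}$ together with the observation that $\tilde\cC_{1/t}\subset \cC_{1/(3t)}$. First I would establish the claimed measure asymptotics and the regularity of $\{\cC_{1/t}\}$ (this is relegated to \S\ref{sg}); the key point is to construct smooth bump functions $\Psi_t^\pm$ supported on, and comparable to, $\cC_{1/t}$ whose Sobolev norm grows polynomially in $t$ — since $\cC$ is a fixed compact smooth submanifold of $\T^1(\cM)$ and $\cC_{1/t}$ is a genuine Riemannian tube of radius $1/t$, one obtains $\cS(\Psi_t^\pm)\ll t^{\mathrm{const}}\asymp \bms(\cC_{1/t})^{-\alpha}$ for suitable $\alpha$, giving inner and outer regularity; the same applies verbatim to the thickening $\tilde\cC_{1/t}$ because it is sandwiched between two honest tubes $\cC_{1/(3t)}\supset\tilde\cC_{1/t}\supset \cC_{1/(ct)}$.

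For part~(1), the logarithm law, I would apply Theorem~\ref{t6} with $B_t=\cC_{1/t}$: its hypothesis is that $\{\tilde B_t\}$ is inner regular, which we have just checked, and its conclusion is that for $\bms$-a.e.\ $x$, $\log\tau_{\cC_{1/t}}(x)/(-\log\bms(\tilde\cC_{1/t}))\to 1$. Since $-\log\bms(\tilde\cC_{1/t})\asymp 2\delta\log t + O(1)$, dividing numerator and denominator by $\log t$ yields $\log\tau_{\cC_{1/t}}(x)/\log t\to 2\delta$, which is exactly part~(1). (One should double-check that the $O(1)$ discrepancy between $-\log\bms(\tilde\cC_{1/t})$ and $2\delta\log t$ is harmless after dividing by $\log t\to\infty$, which it plainly is.)

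For part~(2), the quantitative estimate, I would apply Theorem~\ref{t8}(2) with $B_t=\cC_{1/t}$ but reparametrised so that the target at time $t$ is $\cC_{t^{-\eta}}$; concretely one works with the regular family $B_t = \cC_{1/s(t)}$ where $s(t)=t^{\eta}$, i.e.\ the event $\{\dist(\cG^s(x),\cC)<t^{-\eta}\}$ is $\{\cG^s(x)\in \cC_{t^{-\eta}}\}$, and $\bms(\cC_{t^{-\eta}})\asymp t^{-2\delta\eta}$. The doubling hypothesis $\bms(B_{2t})\asymp\bms(B_t)$ holds because $(2t)^{-2\delta\eta}\asymp t^{-2\delta\eta}$, and the summability condition $\sum_j |\log\bms(B_{2^j})|/(2^j\bms(B_{2^j}))<\infty$ reduces to $\sum_j j\,2^{-j(1-2\delta\eta)}<\infty$, which converges precisely because $\eta<1/(2\delta)$ makes the exponent $1-2\delta\eta$ positive. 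Theorem~\ref{t8}(2) then gives, for $\bms$-a.e.\ $x$ and all $t\gg 1$, $\Leb\{0<s<t:\cG^s(x)\in\cC_{t^{-\eta}}\}\asymp t\cdot\bms(\cC_{t^{-\eta}})\asymp t^{1-2\delta\eta}$, which is part~(2). Here it is essential that, as emphasised after Theorem~\ref{t8}, the amount of time in the target is governed by $\bms(B_t)$ and not by $\bms(\tilde B_t)$ — but since in this example $\bms(\tilde\cC_{1/t})\asymp\bms(\cC_{1/t})$ anyway, there is no loss either way.

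I expect the main obstacle to be entirely in the preparatory step: constructing the smooth approximants $\Psi_t^\pm$ with controlled Sobolev norms and verifying $\bms(\cC_{1/t})\asymp t^{-2\delta}$. The measure estimate requires understanding the local structure of $\bms$ transverse to the geodesic $\cC$: along the flow direction $\bms$ disintegrates with the Patterson--Sullivan densities on the stable/unstable horospheres, and one must show that the $\bms$-mass of an $\epsilon$-tube around $\cC$ scales like $\epsilon^{2\delta}$ — this uses that $\cC$ lifts to an axis whose endpoints lie in the limit set and the $\delta$-conformality (global measure formula) of the Patterson--Sullivan measure, so that the transverse slice of radius $\epsilon$ carries PS-mass $\asymp\epsilon^{\delta}$ in each of the two transverse directions. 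Once that local picture is in hand, the Sobolev bounds follow from standard partition-of-unity and rescaling arguments on the fixed-geometry tube, and the rest is the bookkeeping described above.
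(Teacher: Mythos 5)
Your proposal follows the same route as the paper: prove measure asymptotics $\bms(\cC_\e)\asymp\bms(\tilde\cC_\e)\asymp\e^{2\delta}$ and regularity for the tubes (the paper does this in \secref{sg} by covering $\cC_\e$ by $\asymp \e^{-1}$ small boxes $x_iG_\e M$ and invoking \propref{p:balls}(2)), and then feed these into the general Theorems \ref{t6} and \ref{t8}. The deductions from the general theorems are done carefully and are correct: for (1) the $O(1)$ discrepancy washes out after dividing by $\log t$, and for (2) the doubling and summability conditions do reduce to $1-2\delta\eta>0$. One small caution on the preparatory step: the statement ``the transverse slice carries PS-mass $\asymp\e^\delta$ because the endpoints of the axis lie in the limit set'' is not quite a sufficient justification on its own. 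Merely lying in $\Lambda$ does not give the $\e^\delta$ scaling when there are cusps (the Stratmann--Velani global measure formula has a correction term governed by cusp excursions); what one actually needs, and what the paper uses in \propref{p:balls}(2), is that the lifted geodesic stays in a fixed compact set, which is automatic here precisely because $\cC$ is closed. You should state this explicitly (the endpoints are \emph{radial} limit points, not merely limit points) so that the appeal to $\nu_o(B_{\xi^\pm}(\e))\asymp\e^\delta$ is airtight. Aside from that, your argument is correct and matches the paper's proof.
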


\begin{rem}  
 Since for any $x\in \T^1(\cM)$  we have that 
 $$\liminf_{t\to \infty} \frac{\log(\tau_{\cC_{1/t}}(x))}{\log t}=\left(\limsup_{t\to\infty}\frac{-\log(\dist(\mathcal G^t(x),\cC))}{\log t} \right)^{-1},$$
Theorem \ref{t55} (1) implies that for $\bms$-a.e. $x\in \T^1(\cM)$,
 \begin{equation}\label{dmp}  \limsup_{t\to\infty}\frac{-\log( \dist(\mathcal G^t(x),\cC))}{\log t}=\frac{1}{2\delta},\end{equation}
which was previously shown in \cite[Theorem 4]{DMPV95} to hold for the special case of convex co-compact hyperbolic {\it surfaces}.  

 \end{rem}

\subsection{Shrinking balls}
For any fixed $x_0\in \op{supp}(\m)$, 
we show that the family of shrinking metric balls $B_{t}(x_0):=\{x\in \T^1(\cM): \dist(x,x_0)<1/t\}$
 is regular and  satisfies $\bms(B_{t}(x_0))\asymp \bms(B_{2t}(x_0))$.
When $\G$ is convex co-compact,
$\bms(B_{t}(x_0))\asymp t^{-(2\delta+1)}$ and $\bms(\tilde{B}_{t}(x_0))\asymp t^{-2\delta}$ (see \S \ref{sball}). In particular our results imply the following:

 \begin{thm} \label{t3} Let $\cM$ be convex cocompact.  Fix $x_0\in\op{supp}(\m)$.
  Then for $\mathsf m$-a.e. $x\in \T^1(\cM)$,
\begin{enumerate}
\item  \begin{equation}\label{c5} 
\lim_{t\to \infty} \frac{\log \tau_{B_{t}(x_0)} (x)}{\log t} = {2 \delta}. \end{equation}
  \item For $0<\eta< \frac{1}{2\delta+1}$, we have
   $$ \Leb \{0 <s < t: \dist(\mathcal G^s(x), x_0) \le {t^{\eta}} \} \;\; \asymp \;\;   t^{1-(2\delta+1) \eta}.$$
\end{enumerate}
\end{thm}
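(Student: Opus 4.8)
The plan is to derive Theorem~\ref{t3} from the general shrinking target results --- Theorem~\ref{t6} for the first hitting time and Theorem~\ref{t8} for the sojourn time --- once the geometry of shrinking metric balls from \secref{sball} is fed in. Concretely I will use the following facts, established in \secref{sball}: (i) $\{B_{1/t}(x_0)\}_{t\gg1}$ is regular and its thickening $\{\tilde B_{1/t}(x_0)\}$ is inner regular; (ii) for $\cM$ convex cocompact, $\m(B_{1/t}(x_0))\asymp t^{-(2\delta+1)}$ and $\m(\tilde B_{1/t}(x_0))\asymp t^{-2\delta}$; (iii) $\m(B_{1/t}(x_0))\asymp\m(B_{2/t}(x_0))$ for $t\gg1$.

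For part~(1) I would apply Theorem~\ref{t6} to $B_t:=B_{1/t}(x_0)$, which is permitted by (i). It gives, for $\m$-a.e.\ $x\in\T^1(\cM)$,
\[
\lim_{t\to\infty}\frac{\log\tau_{B_{1/t}(x_0)}(x)}{-\log\m(\tilde B_{1/t}(x_0))}=1 .
\]
Since $-\log\m(\tilde B_{1/t}(x_0))=2\delta\log t+O(1)$ by (ii), dividing numerator and denominator by $\log t$ gives $\log\tau_{B_{1/t}(x_0)}(x)/\log t\to 2\delta$, i.e.\ \eqref{c5}. For part~(2), fix $0<\eta<\frac1{2\delta+1}$ and reparametrize: put $\widehat B_t:=B_{t^{-\eta}}(x_0)$, so that $\mathcal G^s(x)\in\widehat B_t$ exactly when $\dist(\mathcal G^s(x),x_0)<t^{-\eta}$. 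Since $u\mapsto u^{\eta}$ is increasing, $\{\widehat B_t\}$ is again a family of shrinking targets, and it is regular with the same constants as in (i): $\widehat B_t$ is literally the ball $B_{1/t^\eta}(x_0)$, so the same smooth majorants and minorants taken at parameter $t^\eta$ work. By (ii), $\m(\widehat B_t)\asymp t^{-(2\delta+1)\eta}$, which in particular forces the doubling condition $\m(\widehat B_{2t})\asymp\m(\widehat B_t)$, so all hypotheses of Theorem~\ref{t8} hold. The series in Theorem~\ref{t8}(2) then satisfies
\[
\sum_{j\ge1}\frac{|\log\m(\widehat B_{2^j})|}{2^j\,\m(\widehat B_{2^j})}\;\asymp\;\sum_{j\ge1}j\,2^{-j(1-(2\delta+1)\eta)}<\infty ,
\]
the convergence being precisely the condition $(2\delta+1)\eta<1$. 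Hence Theorem~\ref{t8}(2) yields, for $\m$-a.e.\ $x$ and all $t\gg1$, $\Leb\{0<s<t:\mathcal G^s(x)\in\widehat B_t\}\asymp t\,\m(\widehat B_t)\asymp t^{1-(2\delta+1)\eta}$, which is the assertion of part~(2).

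The substantive work, and the step I expect to be the main obstacle, is proving the inputs (i)--(iii) in \secref{sball}. The key is the local product structure of the Bowen--Margulis--Sullivan measure --- locally $\m$ is comparable to the product of the Patterson--Sullivan measures on a stable and an unstable horosphere with Lebesgue measure along the flow direction --- together with Ahlfors $\delta$-regularity of the Patterson--Sullivan measure in the convex cocompact case, so that the PS-mass of a radius-$r$ ball centered on $\Lambda$ is $\asymp r^{\delta}$. This yields $\m(B_r(x_0))\asymp r^\delta\cdot r^\delta\cdot r=r^{2\delta+1}$, whereas the thickening $\tilde B_r(x_0)=\bigcup_{|s|<\e_0}\mathcal G^{s}(B_r(x_0))$ fills a fixed length in the flow direction and hence has mass $\asymp r^\delta\cdot r^\delta=r^{2\delta}$; this one-power-of-$r$ discrepancy is exactly why the hitting time in \eqref{c5} is governed by $t^{-2\delta}$ rather than by $t^{-(2\delta+1)}$. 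Constructing the mollified bump functions $\Psi_t^\pm$ with the required Sobolev bounds --- hence verifying regularity --- is then routine. Within the deduction above, the only point that needs care is choosing the reparametrization so that the summability threshold in Theorem~\ref{t8}(2) coincides exactly with the stated range $\eta<\frac1{2\delta+1}$.
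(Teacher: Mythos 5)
Your proposal is correct and takes essentially the same route as the paper: part~(1) follows from Theorem~\ref{t6} applied to $B_t=B_{1/t}(x_0)=x_0G_{1/t}M$ with $\m(\tilde B_{1/t})\asymp t^{-2\delta}$, and part~(2) from Theorem~\ref{t8}(2) applied to the reparametrized family $B_{t^{-\eta}}(x_0)$ via exactly the summability check at exponent $1-(2\delta+1)\eta$. The geometric inputs you flag as the substantive work (regularity of small balls, $\m(B_r)\asymp r^{2\delta+1}$, $\m(\tilde B_r)\asymp r^{2\delta}$) are established in \secref{sball} precisely by the flow-box local product structure for $\m$ together with Sullivan's shadow lemma giving $\delta$-Ahlfors regularity of the Patterson--Sullivan measure in the convex cocompact case (Propositions~\ref{pp1}, \ref{pp2}, \ref{equal}), matching your sketch.
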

 
 When $\cM$ has cusps, the situation is more complicated as $\bms(B_{t}(x_0))$ can fluctuate, with the fluctuation depending on $x_0$ (or more precisely on the cusp excursions of the geodesic emanating from $x_0 \in \T^1(\cM)$). Combining our previous results on cusp excursions, we can show the following 
 \begin{thm}\label{t4}
Suppose that $\cM$ has cusps. 
\begin{enumerate}
\item For $\bms$-a.e. $x_0\in \T^1(\cM)$, and for $\m$-a.e. $x\in \T^1(\cM)$,
$$\lim_{t\to \infty} \frac{\log \tau_{B_{t}(x_0)} (x)}{\log t} = {2 \delta} .$$
\item For any pair of distinct cusps of ranks $\kappa_1,\kappa_2$, we can find $x_0\in  \T^1(\cM)$ such that  
 for $\m$-a.e. $x\in \T^1(\cM)$,
$$
\lim_{t\to \infty}\frac{\log(\tau_{B_{t}(x_0)}(x))}{\log t}=4\delta-\kappa_1-\kappa_2.$$
\end{enumerate}
 \end{thm}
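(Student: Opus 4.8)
\emph{Plan.} Both parts are deduced from the logarithm law (Theorem~\ref{t6}) applied to the family $\{B_{1/t}(x_0)\}_{t\gg1}$, once the asymptotics of $\bms(\tilde B_{1/t}(x_0))$ are determined. The first step reduces this to a Patterson--Sullivan estimate on the boundary. If $x_0$ lies in the thick part of $\T^1(\cM)$, then a fixed neighbourhood of $x_0$ is isometric to a ball in $\T^1(\bH^n)$, and in the Hopf parametrization $\T^1(\bH^n)\cong(\dd\bH^n\times\dd\bH^n\setminus\mathrm{diag})\times\br$ one has on that neighbourhood $d\bms\asymp d\mu_o(\eta^+)\,d\mu_o(\eta^-)\,ds$, where $o=\pi(x_0)$, $\mu_o$ is the Patterson--Sullivan measure from $o$, and the Gromov product $D_o(\eta^+,\eta^-)$ stays bounded above and below near $x_0$ because $o$ lies on the geodesic joining the two endpoints $\xi_0^\pm\in\Lambda$ determined by $x_0$. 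As thickening fills in the flow direction, this yields, for all $t\gg1$,
\[
\bms\big(\tilde B_{1/t}(x_0)\big)\ \asymp\ \mu_o\big(B(\xi_0^{+},c/t)\big)\cdot\mu_o\big(B(\xi_0^{-},c/t)\big)
\]
with $c>0$ depending only on the metric; in particular $\{B_{1/t}(x_0)\}$ is a family of shrinking targets. The two shadows are governed by the global measure formula for $\mu_o$ (\cite{SV95}): with $u=\log(1/r)$ and $\xi_u$ the point at distance $u$ along $[o,\xi)$,
\[
\mu_o\big(B(\xi,r)\big)\ \asymp\ r^{\delta}\,e^{(\kappa(\xi_u)-\delta)\,\rho(\xi_u)},
\]
where $\rho(\xi_u)\ge0$ is the penetration depth of $\xi_u$ into the cuspidal part and $\kappa(\xi_u)$ the rank of the cusp containing it (the factor being $1$ when $\xi_u$ is in the thick part); in particular $\mu_o(B(p,r))\asymp r^{2\delta-\kappa}$ for any parabolic fixed point $p$ of rank $\kappa$.

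\emph{Part (1).} For $\bms$-a.e.\ $x_0$ the pair $(\xi_0^+,\xi_0^-)$ is $\mu_o\times\mu_o$-generic (the map $x_0\mapsto(\xi_0^+,\xi_0^-)$ pushes $\bms$ forward to a measure locally absolutely continuous with respect to $\mu_o\times\mu_o$); in particular, by Sullivan's logarithm law for cusp excursions in the form behind Theorem~\ref{t1}(1) (equivalently \cite{SV95}), the rays $[o,\xi_0^\pm)$ have sublinear cusp penetration, $\rho((\xi_0^\pm)_u)=O(\log u)$. Feeding this into the global measure formula with $r=1/t$ gives $\mu_o(B(\xi_0^\pm,1/t))=t^{-\delta+o(1)}$, hence $\bms(\tilde B_{1/t}(x_0))=t^{-2\delta+o(1)}$; in particular this quantity is positive, decays polynomially, tends to $0$, and $-\log\bms(\tilde B_{1/t}(x_0))=(2\delta+o(1))\log t$. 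Since $x_0$ is in the thick part, $\{\tilde B_{1/t}(x_0)\}$ is readily checked to be inner regular: take $\Psi_t^-$ a smooth bump equal to $\Id$ on most of the tube $\tilde B_{1/t}(x_0)$, obtained by mollifying at scale $\asymp1/t$ in the $2n-2$ transverse directions with a fixed cutoff in the flow direction; then $\cS(\Psi_t^-)\lesssim t^{\ell}\le c\,\bms(\tilde B_{1/t}(x_0))^{-\alpha}$ for suitable $\alpha$ (by the polynomial lower bound), and $\bms(\Psi_t^-)\gtrsim\bms(\tilde B_{1/t}(x_0))$ because $\mu_o$ is doubling on $\Lambda$ (again by the global measure formula). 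Theorem~\ref{t6} then gives, for $\m$-a.e.\ $x$,
\[
\frac{\log\tau_{B_{1/t}(x_0)}(x)}{-\log\bms(\tilde B_{1/t}(x_0))}\ \to\ 1,\qquad\text{i.e.}\qquad \frac{\log\tau_{B_{1/t}(x_0)}(x)}{\log t}\ \to\ 2\delta.
\]

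\emph{Part (2).} Given cusps of ranks $\kappa_1,\kappa_2$, choose parabolic fixed points $p_1\neq p_2\in\Lambda$ whose $\Gamma$-stabilizers have ranks $\kappa_1$ and $\kappa_2$ (if the two cusps coincide, take two distinct parabolic fixed points in one $\Gamma$-orbit). The complete geodesic $\gamma$ joining $p_2$ to $p_1$ lies in $\op{hull}(\Lambda)$ and, having distinct endpoints, is not contained in any horoball, so it meets the thick part; let $x_0$ be its unit tangent vector at a point of the thick part, so $\xi_0^+=p_1$ and $\xi_0^-=p_2$. Along each ray $[o,p_j)$ the geodesic eventually enters and then runs straight down the cusp of $p_j$, so $\rho((p_j)_u)=u+O(1)$ and $\kappa((p_j)_u)=\kappa_j$ for $u$ large; the global measure formula gives $\mu_o(B(p_j,1/t))\asymp t^{-(2\delta-\kappa_j)}$, whence
\[
\bms\big(\tilde B_{1/t}(x_0)\big)\ \asymp\ t^{-(2\delta-\kappa_1)}\,t^{-(2\delta-\kappa_2)}\ =\ t^{-(4\delta-\kappa_1-\kappa_2)},
\]
a genuine shrinking family since $4\delta-\kappa_1-\kappa_2>0$ (each $\kappa_j<2\delta$). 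Inner regularity of $\{\tilde B_{1/t}(x_0)\}$ follows exactly as in Part~(1), and Theorem~\ref{t6} yields, for $\m$-a.e.\ $x$,
\[
\lim_{t\to\infty}\frac{\log\tau_{B_{1/t}(x_0)}(x)}{\log t}\ =\ 4\delta-\kappa_1-\kappa_2 .
\]

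\emph{Main obstacle.} The only place the cusp geometry really enters is the second half of the first step---translating cusp excursions into shadow-measure asymptotics through the global measure formula, together with (for Part~(1)) the uniform sublinear penetration $\rho((\xi)_u)=O(\log u)$ valid along $[o,\xi)$ for $\bms$-a.e.\ base point. Once $\bms(\tilde B_{1/t}(x_0))$ is pinned down, verifying the hypotheses of Theorem~\ref{t6} is routine: the bump functions are supported in the thick part and the required mass comparison is just the doubling property of $\mu_o$. A point to handle carefully in Part~(1) is the order of quantifiers: one first discards a $\bms$-null set of centres $x_0$ so that $\bms(\tilde B_{1/t}(x_0))=t^{-2\delta+o(1)}$ holds, and only then applies Theorem~\ref{t6} to obtain, for each remaining $x_0$, the corresponding $\m$-conull set of starting points $x$; no Fubini argument is needed.
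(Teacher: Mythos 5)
Your proof is correct and follows essentially the same route as the paper: in both arguments one uses the product structure of $\bms$ on flow boxes to reduce $\bms(\tilde B_{1/t}(x_0))$ to a product of Patterson--Sullivan shadows at the two endpoints, then invokes the Stratmann--Velani global measure formula (Lemma~\ref{l:Shadow} in the paper), Sullivan's cusp-excursion logarithm law (via Theorem~\ref{t1}(1)) to control the generic penetration depth, and finally Theorem~\ref{t6}. The paper packages the shadow computation into Proposition~\ref{p:balls} (parts (1) and (3)) before applying Theorem~\ref{t6}, whereas you unpack the same shadow estimate directly; this is a presentational difference only.
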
 

\begin{rem}
We note that if $\cM$ has finite volume, then $\delta=n-1$ and  $\m(\tilde{B}_{t}(x_0))\asymp t^{-(2n-2)}$. Hence, in this case,  the same arguments imply that for $\m$-a.e. $x\in \T^1(\cM)$, we have $\lim_{t\to \infty} \frac{\log \tau_{B_{t}(x_0)} (x)}{\log t} = 2(n-1)$. We note that here the shrinking targets are in $\T^1(\cM)$, unlike the results of \cite{Mau06} which considered shrinking balls inside $\cM$, in which case the limit is $n-1$ (see also \cite{KZ17}, for related result for the discrete time geodesic flow).
\end{rem}

 \subsection{Strategy of proof} 

First  we define an averaging operator, along the discrete time, acting on  $L^2(\T^1(\cM),\m)$:
 $$\lambda_T(\Psi)(x)=\frac{1}{T} \sum_{k=1}^T \Psi(\cG^k(x)).$$
 If $\Psi$ is the characteristic function of $B$, we simply write $\lambda_T(B)$ instead of $\lambda_T(1_B)$.
 The Birkhoff  ergodic theorem implies that for a.e. $x\in X$,
 $$\lim_{T\to \infty} \lambda_T(\Psi)(x) =\int_{\T^1(\cM)}\Psi d\bms.$$
 
We note that if we had  a rate control in this convergence such as 
 \begin{equation}\label{control} |\lambda_T(B_t)(x) -\m(B_t)|\ll \frac{ \sqrt{\m(B_t)} |\log (\m(B_t))|}{\sqrt{T}}, \end{equation}
 we would get
 \begin{equation}\label{tau1} \log \tau^d_{B_t}(x) \le |\log \m(B_t) | +2\log|\log \m(B_t)| \end{equation}
 just from the simple observation that $\lambda_{\tau^d_{B_t}(x)}(B_t)=0$.

An estimate like \eqref{control} is too strong to be true for a.e. individual points $x$. So, instead, we prove its mean-version for all smooth functions $\Psi\in L^2(\T^1(\cM),\m)$,  that is, 
\begin{equation}\label{control2} \|\lambda_T(\Psi)-\m(\Psi)\|_2 \leq C \frac{\|\Psi\| \log (\frac{\mathcal S(\Psi)}{ \|\Psi\|_2}) }{\sqrt{T}} \end{equation}
for some uniform constant $C>0$.
The regularity conditions imposed on the thickenings $\tilde B_t$ of our shrinking targets are precisely so that
we could apply \eqref{control2} to smooth functions which approximates $1_{\tilde B_t}$ and deduce
\begin{equation}\label{control3} \|\lambda_T(\tilde B_t)-\m(\tilde B_t)\|_2 \ll   \frac{\sqrt{\m(\tilde B_t)} \log |(\m(\tilde B_t))| }{\sqrt{T}} .\end{equation}

This effective mean ergodic theorem for $\tilde B_t$'s enables us to obtain 
that for a.e. $x$,
 \begin{equation}\label{tau1} \log \tau^d_{\tilde B_t}(x) \le | \log \m(\tilde B_t)|+O(\log|\log \m(\tilde B_t)|), \end{equation}
for all sufficiently large $t$.
Using that $|\tau^d_{\tilde B_t}(x)-\tau_{B_t}(x)|\le 1$, we deduce that 
$$\limsup_{t\to \infty} \frac{\log \tau_{B_t}(x)}{-\log \m(\tilde B_t)}\le 1 .$$
This is the non-trivial direction of the logarithm law Theorem \ref{t6}; the other direction holds for general shrinking targets in any dynamical system (see e.g. \cite[Lemma 2.2]{KelmerYu17}). Theorems \ref{t7} and \ref{t8} are also proved in a similar spirit using the effective mean ergodic theorem. 

 The use of quantitative mixing of geodesic flow in the shrinking target problem in the homogeneous setting goes back to the work of Kleinbock and Margulis \cite{KM99}, and
the idea of using an effective mean ergodic theorem was first introduced in  \cite{GK17} and more explicitly in \cite{Kel17, KelmerYu17}, where these ideas were used to prove the analogous results for finite volume hyperbolic manifold.

Here we will use the following exponential decay of matrix coefficients for geometrically finite hyperbolic manifolds:
\begin{thm}\label{mix}
There exists $\eta_0>0$ such that 
for any $\Psi_1, \Psi_2\in C^\infty(\T^1(\cM))$ with support in one-neighborhood of $\op{supp}(\m)$,  for all $t\geq 1$,
\be\label{mmix0}  \int_{\T^1(\cM)} \Psi_1(\mathcal G^t(x)) \Psi_2(x) \; d\m (x)
 = \m(\Psi_1)\m (\Psi_2) +O( e^{-\eta_0 t}   \cS (\Psi_1) \cS(\Psi_2)). \ee
{ Moreover, $\eta_0$ is explicitly computable when $\delta>\frac{n-1}{2}$, depending only on the spectral gap for the Laplacian on $L^2(\cM)$.
 If $\Gamma$ is convex cocompact or $\delta>n-2$, \eqref{mmix0} with $\m$ replaced by its $M$-invariant lift on $\Gamma\ba G$
  holds for any $\Psi_1, \Psi_2\in C^\infty(\Gamma\ba G)$.}
 \end{thm}
 This theorem was obtained in (\cite{MO15}, \cite{EO}) 
for compactly supported functions under the assumption $\delta>\frac{n-1}{2}$
and in  \cite{Sto11} for any convex cocompact $\Gamma$ (see also \cite{Wi} for the same result for the frame flow).
 In order to study shrinking target problem
for cusp neighborhoods as described in Theorem \ref{t1}, removing the compact support condition is crucial as we need to study functions that are positive
on cusps.  We use the quantitative decay of the matrix coefficient of the functions
 $L^2(\Gamma\ba G)$ with respect to the Haar measure $\m^{\Haar}$ in \cite{MO15}, and exploit the product structures of $\m$ and $\m^{\Haar}$ to transfer the exponential rate information on
  the transversal intersections of $\mathcal G^t (B_\epsilon(x))$ for the flow box $B_\e(x)$, that we get from the behavior of the
  correlation function with respect to $\m^{\Haar}$, to the behavior of the correlation function with respect to $\m$.
Here $\e$ depends on the injectivity radius of $x$, and as we need to control the exponential rate independent of
the injectivity radius for Theorem \ref{mix}, which is required to deal with functions which are not compactly supported, the whole procedure
turns out to be technically quite subtle.
{ The remaining cases of geometrically finite manifolds with cusps are proved in a recent work of Li-Pan \cite{LiPan20}.}



\medskip

After some preliminaries given in section \ref{s:preliminaries}, we devote section \ref{s:mix} to the proof of Theorem \ref{mix}.
With this result in hand, we prove effective mean ergodic theorem in this setting (see Theorem \ref{t:MET}), and use it in section \ref{s:shrinking} to establish results on shrinking target problems for both the discrete and continuous time flow. While the results we obtain for the discrete time flow are essentially optimal, this is not the case for some of the results for continuous time flow. Nevertheless, in section \ref{s:cont}, we show how one can obtain optimal results for the continuous flow by translating it into a discrete time flow problem for a thickened target.  In section \ref{s:example}, we deduce Theorems \ref{t1}, \ref{t55}, \ref{t3} and \ref{t4} by proving the regularity of the corresponding shrinking sets and by computing their volumes using Sullivan's shadow lemma and the structure of cusps
for geometrically finite manifolds.

 \section{Preliminaries and notation}\label{s:preliminaries}
 
\subsection{Notations and conventions} 
Let $G\cong \SO(n,1)^o$ be the group of orientation preserving isometries of $\bH^n$, and $\G < G$ 
 a geometrically finite, torsion-free, Zariski dense, discrete subgroup of $G$. 
 We denote by $\Lambda$ the limit set of $\Gamma$, and by $0<\delta\le n-1$
 the Hausdorff dimension of $\Lambda$, which is equal to the critical exponent of $\Gamma$.
 Let $\cM=\Gamma\ba \bH^n$.
Let  $K< G$  be a maximal compact subgroup and identify $\cM$ with $\Gamma\ba G/K$.
There exists  a one parameter diagonalizable  subgroup $A=\{a_t\}$ so that if $M$ denotes the centralizer of $A$ in $K$, then
 the unit tangent bundle $\T^1(\cM)$ can be identified with $\Gamma\ba G/M$ in the way that
the geodesic flow $\mathcal G^t$ on $\T^1(\cM)$ corresponds to the right translation action of $a_t$ on $\Gamma\ba G/M$. 
  With this identification we can work in the homogeneous space $\G\bk G$ and  think of subsets and functions on $\T^1(\cM)$  and $\cM$ respectively as $M$-invariant (resp. $K$ invariant) subsets and functions on $\G\bk G$. 


We say that two families $\{B_t\}$ and $\{A_t\}$ of shrinking sets are Lipschitz equivalent and write $B_t\asymp A_t$, if there are some positive constants $c_1,c_2$ such that $B_{c_1t}\subseteq A_{t}\subseteq B_{c_2 t}$ for all $t>1$.

We fix a left $G$-invariant and right $K$-invariant metric $d$ on $G$ which descends to the hyperbolic metric on $\bH^n=G/K$.
This induces a unique metric on $G/M$ which we will also denote by $d$ by abuse of notation. The metric $d$ defines a distance function on $\T^1(\cM)=\G\bk G/M$  given by
$\dist(\G g,\G h)=\inf_{\g\in \G} d(\g g,h)$. 

\subsection{Invariant measures}
For $\xi\in \partial\bH^n$, let $\beta_\xi:\bH^n\times\bH^n\to \R$ denote the Busemann function for the geodesic flow, defined by 
$$\beta_\xi(x,y)=\lim_{t\to\infty} d(x,\xi(t))-d(y,\xi(t)),$$
with $\xi(t)$ a unit speed geodesic ray toward $\xi$.
 A family of measures
$\{\mu_x:x\in \bH^n\}$ is called  a {\em $\G$-invariant conformal
density\/} of dimension $\delta_\mu > 0$  on $\partial\bH^n$, if  each
$\mu_x$ is a non-zero finite Borel measure on $\partial\bH^n$
satisfying for any $x,y\in \bH^n$, $\xi\in \partial\bH^n$ and
$\gamma\in \G$,
$$\gamma_*\mu_x=\mu_{\gamma x}\quad\text{and}\quad
 \frac{d\mu_y}{d\mu_x}(\xi)=e^{-\delta_\mu \beta_{\xi} (y,x)}, $$
where $\gamma_*\mu_x(F)=\mu_x(\gamma^{-1}(F))$ for any Borel
subset $F$ of $\partial\bH^n$. 

In particular, the {\em{Patterson-Sullivan density}} $\{\nu_x\}$ is a $\G$-invariant conformal density supported on the limit set $\Lambda$ 
of dimension $\delta$ 
and the {\em{Lebesgue density}} $\{m_x\}$ is a $G$-invariant conformal density of dimension $(n-1)$ (both are unique up to scalar multiplications).

Let $\pi:\T^1(\bH^n)\to \bH^n$ be the basepoint projection. For $u\in \T^1(\bH^n)$, we denote by $u^{\pm}\in \partial \bH^n$ the forward and the backward endpoints of the geodesic determined by
$u$. Fix $o\in \bH^n$ so that $K$ fixes $o$. The map
\[
u \mapsto (u^+, u^-, s=\beta_{u^-} (o,\pi(u)))
\]
is a homeomorphism  between $\T^1(\bH^n)$ and $( \partial\bH^n\times \partial\bH^n- \{(\xi,\xi):\xi\in \partial\bH^n\})  \times \R.$
  In these coordinates, the BMS measure $\bms=\m^{\rm{BMS}}$, 
 the Haar measure $\m^{\rm{Haar}}$, and the Burger-Roblin measure $\m^{\rm{BR}}$ on $\T^1(\bH^n)$ are given by
\begin{enumerate}
 \item 
$
d\bms(u)= e^{\delta \beta_{u^+}(o,
\pi(u))}\;
 e^{\delta \beta_{u^-}(o, \pi(u)) }\; d\nu_o(u^+) d\nu_o(u^-) ds.
$

\item  $ d\m^{\rm{Haar}}(u)= e^{(n-1)\beta_{u^+}(o,
\pi(u))}\;
 e^{(n-1) \beta_{u^-}(o, \pi(u)) }\; dm_o(u^+) dm_o(u^-) ds.
$

\item
$ d\m^{\rm{BR}}(u)= e^{(n-1) \beta_{u^+}(o,
\pi(u))}\;
 e^{\delta \beta_{u^-}(o, \pi(u)) }\; dm_o(u^+) d\nu_o(u^-) ds.
 $

\end{enumerate}

These measures are all left $\G$-invariant, and hence
descend to corresponding measures on $\T^1(\cM)$ .
Using $\T^1(\bH^n)=G/M$, we can lift the above measures to right $M$-invariant measures on $\Gamma\ba
G$, which we still denote by $\bms$, $\m^{\Haar}$ and $\m^{\BR}$ by abuse of notation. The measure $\bms$ is  finite and ergodic with respect to the geodesic flow  \cite{Sul82}. 
We will normalize the Patterson-Sullivan density $\{\nu_x\}$ so that $\m(\T^1(\cM))=\bms(\G\bk G)=1$.

Let $N=N^+$ and $N^-$ denote the expanding and the contracting horospherical subgroups respectively, i.e.,
$$N^{\pm}=\{g\in G: a_s ga_{-s}\to e\text{ as $s\to \pm \infty$}\}.$$

Note that $$\Omega:=\op{supp}(\m)=\{[g]\in \G\ba G: g^+, g^-\in\Lambda(\G)\},$$
where  $g^{\pm}:=[gM]^{\pm} \in \partial\bH^n$.

The BMS measure $\bms$ has a natural foliation corresponding to the decomposition $PN=G$ (modulo a Zariski closed subset)
with $P=N^-AM$.
Explicitly, for any $g\in G$, we define the PS-measure and the Lebesgue measure on the coset $gN$, by
\begin{equation} \label{e: muPS} 
d\tilde \mu_{gN}^{\PS}(gn)= e^{\delta  \beta_{(gn)^+}(o,gn)} d\nu_o (gn)^+,
\end{equation}
and 
\begin{equation}\label{e:muLeb} 
d\tilde \mu_{gN}^{\Leb}(gn)= e^{(n-1) \beta_{(gn)^+}(o,gn)} dm_o(gn)^+,
\end{equation}
respectively. We  also define the measure $\tilde\nu_{gP}$ on the coset $gP$ by
\begin{equation}\label{e:nu}
d\tilde \nu_{gP}(gp)=e^{\delta t} d\nu_o(gp)^-dt\end{equation}
 for $t=\beta_{(gp)^{-}}(o, gp)$.
Using the decomposition $G=gPN$ and noting that $(gpn)^-=(gp)^-$, we have that for any $\Psi\in C_c(G)$, 
\begin{equation}\label{e:mPsi} \m(\Psi)=\int_{gP}\int_{N}\Psi(gpn)d\tilde \mu^{\PS}_{gpN}(gpn) d\nu_{gP}(gp) .\end{equation}

Finally, for $x=[g]\in \Gamma\ba G$ and $\e>0$ smaller than the injectivity radius at $x$, we denote by $d\mu_{xN_\e}^{\PS}$ and $d\nu_{xP_\e}$  the  measures 
induced by $d\tilde \mu_{gN}^{\PS}$ and $d\tilde\nu_{gP}$ on  $xN_\e$ and $xP_\e$ respectively.

\subsection{Cusp decomposition}\label{s:cusps}
Let $X_0$ be the pre-image of the convex core of $\cM$
under the base point  projection map $\pi: \Gamma\ba G\to \Gamma\ba G/K=\cM$ and let $X$ be the unit neighborhood of $X_0$. Then $\Omega\subseteq X_0\subseteq X$  and since $\cM$ is geometrically finite, $X$ has finite Haar-measure. When $\cM$ is convex cocompact, $X$ is compact, and otherwise it can be decomposed into a compact part and finitely many cusp neighborhoods, as we describe below.

Let $\Lambda_p\subset \Lambda$ denote the set of parabolic fixed points (i.e. points fixed by some parabolic element of $\G$). Since $\G$ is geometrically finite, $\Lambda_p$ consists of finitely many $\G$-orbits represented by $\{\xi_1,\ldots,\xi_k\}$ which are called
cusps of $\cM$. A cuspidal neighborhood of $\xi_i\in \Lambda_p$ is a set of the form 
\begin{equation}\label{hii} \fh_i=\pi^{-1} (\G\bk \G H_{\xi_i})\end{equation}
 where $H_{\xi}\subseteq \bH^n$ is some fixed horoball tangent to $\xi$ such that
 $\gamma H_{\xi}\cap H_{\xi}\ne \emptyset $ if and only if $\gamma$ fixes $\xi$. For each $i$,
 the stabilizer $\op{Stab}_\Gamma (\xi_i)$ is a free abelian subgroup and we denote its rank by $\kappa_i$.
 We set $\kappa_{\rm max}:=\max \kappa_i$ and $\kappa_{\rm min}:=\min \kappa_i$. Note that 
 $2\delta > k_{\rm max}$(see \cite[Lem. 3.5]{CI99}).

For $x\in \G\bk G$, we denote by $r_x$ the injectivity radius at $x$.
For all sufficiently small $\epsilon>0$, let 
$X(\epsilon)=\{x\in X:r_x<\epsilon\},$
so that 
 $$Y(\epsilon):=X\setminus X(\epsilon)$$ is compact, and the family $X(\epsilon)$ with  $\epsilon<\e_0$ forms a shrinking family of cusp neighborhoods.

More explicitly, we show in section \ref{s:cusp} that for all sufficiently small $\e>0$,
\begin{equation}\label{e:inj}
X(\e)\cap \fh_i  \asymp X\cap \fh_{i, \log(\e^{-1})},
\end{equation}
and using the measure estimate $\m(\fh_{i, \log(\e^{-1})}) \asymp \e^{2\delta-\kappa_i}$ (see Proposition \ref{p:BMScusp}),
we get that
\begin{equation}\label{e:mXepsilon}
\m(X(\e))\asymp  \e^{2\delta-\kappa_{\rm max}}.\end{equation}

 \subsection{Sobolev norms}\label{sob}
 The mixing rate of the geodesic flow depends on the smoothness of the test functions which can be captured by appropriate Sobolev norms we now define.
Given a fixed basis of $\op{Lie} G$,
$l \in \N$, and $1\le p\le \infty$,
the Sobolev norm $\cS_{p,l}(\Psi)$ of $\Psi\in C^\infty(\G\bk G)$ is defined by
\begin{equation} \label{si}\mathcal \cS_{p, l}(\Psi)=\sum \| D (\Psi)\|^{\Haar}_{p} \end{equation}
where the sum is taken over all monomials  $D$ of order at most $l$ in the basis elements,
and $\|\Psi\|^{\Haar}_p$ denotes the $L^p(\G\bk G, \m^{\Haar})$-norm of $\Psi$. While this norm depends on the choice of basis, changing the basis will only change the norm by some bounded factor. 

We will mostly use the norms $\cS_{\infty,l}$, which we will denote by $\cS_l$ to simplify notation.
Since $\supp(\m)\subset X$, it is sufficient for our purpose to consider functions supported on $X$, and since 
 $X$ has finite Haar measure we can, and will use the bound
$$\cS_{p,l}(\Psi)\leq \cS_{l}(\Psi)\m^{\rm{Haar}}(X)^{1/p}\ll \cS_{l}(\Psi),$$
where the implied constant is independent of $\Psi\in C^\infty(X)$.

\section{Decay of matrix coefficients}\label{s:mix}
A crucial ingredient in our proof is the exponential mixing of the geodesic flow with respect to the BMS-measure. 
We use the inner product notation:
$$\la a_t \Psi, \Phi \ra =\int_{\G\bk G} \Psi(xa_t) \Phi(x) \; d\m (x) .$$

{ By the remarks following Theorem \ref{mix}, this following theorem is the only missing part of it, given the works \cite{Wi} and \cite{LiPan20}. }

\begin{thm}\label{t:mmix} { Suppose that $\delta> \max\{\tfrac{n-1}{2},n-2\}$ (resp. $\delta>\tfrac{n-1}{2}$).}
Then  there exist an explicit $\eta_0>0$ (depending only on the spectral gap of $L^2(\mathcal M)$) and $l\in \N$,
such that 
for any bounded $\Psi, \Phi\in C^\infty(X)$ (resp. $\Psi, \Phi\in C^\infty(X)^M$) 
$$\la a_t \Psi, \Phi \ra
 = {\m(\Psi)\cdot \m(\Phi)} +O( e^{-\eta_0 t}   \cS_l(\Psi) \cS_l(\Phi)). $$
\end{thm}

In the rest of this section, we assume 
$$\delta>(n-1)/2 .$$

Theorem \ref{t:mmix} with an explicit $\eta_0$ depending only on the spectral gap of $L^2(\cM)$
 is then proved in  (\cite[Theorem 6.16]{MO15}, \cite{EO}) under the assumption that the test functions are compactly supported. In order to complete the proof of the theorem we need to remove the assumption on the support of the test functions.

To do this, we will approximate $\Psi$ as the sum $\Psi_{ \e} +(\Psi -\Psi_{\e})$
where $\Psi_{\e}$ is a smooth function supported on $Y(\e )$, and similarly for $\Phi$.
In view of \eqref{e:mXepsilon}, the main term will be reduced to
$\la a_t \Psi_{\e}, \Phi_{ \e}\ra$, for which the result follows from \cite[Theorem 6.16]{MO15}. However, since the dependence on the supports of 
$\Psi_{\e}$ and $ \Phi_{\e}$ was not made
explicit in terms of $\e$ in \cite{MO15}, we need to redo their arguments while keeping track of
the dependence on $\e$ as well as on all implied constants along the proof.

\subsection{Control of BR measures}
Since $\m^{\BR}(\G\bk G)=\infty$ when $\G <G$ is not a lattice, and some of the implied constants in \cite[Thm. 6.16]{MO15} depend on $\m^{\BR}(\supp(\Psi))$, we need the following result to control the dependence on these measures. 

\begin{lem}\label{br2}
Assume that $\delta>\tfrac{n-1}{2}$. 
Then there exists $c>0$ such that for any $K$-invariant subset $Y\subset\G\bk G$ with $\m^{\Haar}(Y)<\infty$,
we have $$\m^{\BR}(Y)\le c \cdot \sqrt{\m^{\Haar}(Y)}.$$
\end{lem}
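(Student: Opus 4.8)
The plan is to exploit the product structures of $\m^{\BR}$ and $\m^{\Haar}$ in the coordinates $(u^+, u^-, s)$, together with the Cauchy--Schwarz inequality applied ``in the $u^+$ variable''. Recall from the formulas for these measures that, for a $K$-invariant (in particular $M$-invariant) set $Y$, both measures are obtained by integrating a density of the form (factor depending on $u^+$)$\times$(factor depending on $u^-$)$\,ds$ over the orbit structure $G = gPN$: the $u^-$-and-$s$ directions carry the same measure $\tilde\nu_{gP}$ for both $\m^{\BR}$ and $\m^{\Haar}$ (namely $e^{\delta t}d\nu_o(u^-)\,dt$ versus $e^{(n-1)t}dm_o(u^-)\,dt$ — here I should be careful: the backward/transversal factor for $\m^{\BR}$ uses $\nu_o$, matching $\m^{\BMS}$, while the forward $u^+$-factor uses $m_o$, matching $\m^{\Haar}$), and the only difference between $\m^{\BR}$ and $\m^{\Haar}$ is in the $u^+$-direction: $\m^{\BR}$ uses the Lebesgue leafwise measure $\tilde\mu^{\Leb}_{gN}$ (exponent $n-1$, base measure $m_o$) while$\m^{\Haar}$ uses the same thing — wait, that is the point: $\m^{\BR}$ has the Lebesgue/horospherical factor in the $N$-direction exactly as $\m^{\Haar}$ does, and differs from $\m^{\Haar}$ only in the transversal $P$-direction, where it carries $d\tilde\nu_{gP} = e^{\delta t}d\nu_o(u^-)dt$ in place of the Haar transversal $e^{(n-1)t}dm_o(u^-)dt$. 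So disintegrating $\m^{\BR}$ along the $N$-orbits gives $\m^{\BR}(Y) = \int_{gP}\big(\tilde\mu^{\Leb}_{gpN}(Y\cap gpN)\big)\,d\tilde\nu_{gP}(gp)$.

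The key step is then a pointwise comparison on each $N$-leaf: I want to bound $\tilde\mu^{\Leb}_{gpN}(Y\cap gpN)$ by $C\sqrt{\tilde\mu^{\Leb}_{gpN}(Y\cap gpN)}$ trivially if that quantity is $\le 1$, but that is circular. Instead, the correct move is the global Cauchy--Schwarz: write $\m^{\BR}(Y) = \int_{\G\bk G} \mathbf 1_Y \, d\m^{\BR}$ and re-express $d\m^{\BR}$ against $d\m^{\Haar}$. Concretely, using that the $N$-leafwise measures of $\m^{\BR}$ and $\m^{\Haar}$ coincide and the transversal measures differ, one gets $\m^{\BR}(Y) = \int \mathbf 1_Y \cdot \rho \, d\m^{\Haar}$ for a density $\rho$ which is NOT bounded but lies in $L^2(\m^{\Haar})$ precisely when $\delta > (n-1)/2$. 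The finiteness $\int \rho^2 \, d\m^{\Haar} < \infty$ reduces, after integrating out the leaf directions, to convergence of $\int_0^\infty e^{(2\delta - (n-1))t} \cdot (\text{measure of a horoball slice at depth } t)\, dt$-type integrals; the horoball-slice measures decay like $e^{-\delta' t}$ for appropriate exponents, and the Zariski-density plus geometric finiteness of $\G$ (together with $2\delta > \kappa_{\max}$) guarantee convergence exactly in the range $\delta > (n-1)/2$. Granting $\rho \in L^2(\m^{\Haar})$ with $\|\rho\|_2 = c < \infty$, Cauchy--Schwarz gives
$$\m^{\BR}(Y) = \int \mathbf 1_Y\, \rho\, d\m^{\Haar} \le \|\mathbf 1_Y\|_{L^2(\m^{\Haar})}\,\|\rho\|_{L^2(\m^{\Haar})} = c\,\sqrt{\m^{\Haar}(Y)},$$
which is the claim.

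The main obstacle is making the density $\rho = d\m^{\BR}/d\m^{\Haar}$ precise and proving $\rho \in L^2(\m^{\Haar})$ — i.e. identifying it cleanly from the conformal-density formulas and then controlling the resulting integral over cusp neighborhoods. This is where $\delta > (n-1)/2$ enters decisively: in the cusps, $m_o$ is Lebesgue on $\partial\bH^n$ of dimension $n-1$ while $\nu_o$ is the Patterson--Sullivan density of dimension $\delta$, and near a parabolic fixed point the Radon--Nikodym-type comparison between the $\delta$-dimensional and $(n-1)$-dimensional conformal weights produces a factor growing like $e^{((n-1)-\delta)t}$ along a cusp excursion of depth $t$, against a BMS/Haar volume decaying like $e^{-(2\delta-\kappa_i)t}$ or $e^{-(n-1 - \kappa_i)t}$ respectively; squaring $\rho$ doubles the growth exponent, and one checks the product is integrable iff $2((n-1)-\delta) < \text{(decay rate)}$, which rearranges to $\delta > (n-1)/2$. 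I would carry this out by first reducing, via the cusp decomposition of section \ref{s:cusps} and the $K$-invariance of $Y$, to estimating $\m^{\BR}$ and $\m^{\Haar}$ separately on the compact part (where the comparison is trivial and the constant is bounded) and on each standard horoball neighborhood $\fh_i$ (where one uses the explicit parabolic coordinates and the measure estimates $\m(\fh_{i,t})\asymp e^{-(2\delta-\kappa_i)t}$ already quoted, together with their Haar analogues), and then summing the finitely many cusp contributions.
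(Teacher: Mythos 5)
The plan cannot work as written, because $\m^{\BR}$ is not absolutely continuous with respect to $\m^{\Haar}$, so the ``density'' $\rho = d\m^{\BR}/d\m^{\Haar}$ does not exist as a function. You correctly observe that the two measures share the same $N$-leafwise factor (exponent $n-1$ against $m_o$) and differ only in the transversal $P$-direction, where $\m^{\BR}$ carries $e^{\delta t}\,d\nu_o(u^-)\,dt$ while $\m^{\Haar}$ carries $e^{(n-1)t}\,dm_o(u^-)\,dt$. But the Patterson--Sullivan measure $\nu_o$ is supported on the limit set $\Lambda$, which has $m_o$-measure zero whenever $\delta<n-1$, so these transversal measures are mutually \emph{singular}, and Cauchy--Schwarz has no integrand $\Id_Y\cdot\rho$ to apply to. This is a genuine obstruction, not a technicality to be ``made precise'' later.

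The missing ingredient is that the $K$-invariance of $Y$ must be exploited to change the object being compared: what is absolutely continuous is the \emph{$K$-averaged} projection of $\m^{\BR}$, and the correct density is the base eigenfunction $\phi_0$ of the Laplacian, $-\Delta\phi_0=\delta(n-1-\delta)\phi_0$, which one may realize as $\phi_0(x)=\int_{\partial\bH^n}e^{-\delta\beta_\xi(o,x)}\,d\nu_o(\xi)$ (Sullivan). The paper's proof is exactly this: by the Kontorovich--Oh identity one has $\m^{\BR}(\Psi)=\int\Psi\,\phi_0\,d\m^{\Haar}$ for all $K$-invariant $\Psi$; combined with the Lax--Phillips and Sullivan fact that $\phi_0\in L^2(\m^{\Haar})$ precisely when $\delta>(n-1)/2$, Cauchy--Schwarz then gives the bound with $c=\|\phi_0\|^{\Haar}_2$. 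Your heuristic that the density grows like $e^{((n-1)-\delta)t}$ into the cusps and is square-integrable against Haar exactly when $\delta>(n-1)/2$ is qualitatively the right reason why $\phi_0\in L^2$, but it has to be attached to $\phi_0$ --- a genuine function on $\G\bk G/K$ --- rather than to a Radon--Nikodym derivative of two mutually singular measures.
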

\begin{proof}
Recall that by \cite{Sul79} and \cite{LP82}, there exists a positive eigenfunction
 $\phi_0\in C^\infty(\Gamma\ba G)^K$ for the Laplace operator such that
$$-\Delta \phi_0= \delta(n-1-\delta)\phi_0.$$
Under the assumption $\delta>\tfrac{n-1}{2}$, we have $\|\phi_0\|^{\Haar}_2<\infty .$
If $\Psi$ denotes  the indicator function of $Y$, then $\Psi$ is $K$-invariant and hence by  \cite[Lem. 6.7]{KO11}
$$\m^{\BR}(\Psi)=\int_{X} \Psi(x) \phi_0(x) d\m^{\Haar}(x), $$
and in particular 
$\m^{\BR}(Y)\leq  \|\phi_0\|^{\Haar}_2\sqrt{\m^{\Haar}(Y)}$,
as claimed. 
\end{proof} 

Since $X$ is $K$-invariant with $\m^{\Haar}(X)<\infty$, the following follows from Lemma \ref{br2}:
\begin{Cor} If $\delta>\frac{(n-1)}{2}$,
then  $ \m^{\BR}(X)<\infty .$
\end{Cor}

\subsection{Test function supported on small balls}
For a subset $S\subseteq G$ and $\e>0$, $S_\e$ denotes the $\e$-neighborhood of $e$ in $S$, that is, 
$S_{\e}=\{g\in S: d(g, e)\le \e\}$.
Set $B_\e:=P_{\e}N_\e$; and note that $G_\e\asymp B_\e$ for all sufficiently small $\e>0$.
In this subsection, we will prove the following.

\begin{prop}\label{p:effe}  Suppose $\delta> \max\{\tfrac{n-1}{2},n-2\}$ (resp. $\delta>\tfrac{n-1}{2}$). There exist  $l\in \N$ depending only on $\dim(G)$ and $\eta>0$ (depending only on the spectral gap of $\Gamma$) such that for any $\e\in(0,1)$ small and  any $x\in Y(\e)\cap \Omega$, 
 for all  $\Phi, \Psi\in C^\infty(xB_{\e})$ (resp. $\Phi, \Psi\in C^\infty(xB_{\e} M)^M$),  we have that 
\begin{align} \label{effe} &
\la a_t \Psi, \Phi \ra = \bms(\Psi) \bms(\Phi)+ O(e^{-\eta t}  \cS_{l}(\Psi) \cS_{l}(\Phi))
\notag \end{align}
where the implied constant is absolute.
\end{prop}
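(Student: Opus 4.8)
The plan is to leverage the known compactly-supported exponential mixing result \cite[Thm. 6.16]{MO15} together with the product structure of the BMS measure along the $PN$-decomposition. Since $x\in Y(\e)\cap\Omega$ has injectivity radius $r_x\ge\e$, the ball $xB_\e$ is embedded, and we may work upstairs in $G$: lift $\Psi,\Phi$ to functions on $gB_\e\subset G$ where $x=[g]$. Using the parametrization $gB_\e = gP_\e N_\e$ and formula \eqref{e:mPsi}, write
\[
\bms(\Psi)=\int_{gP_\e}\int_{N_\e}\Psi(gpn)\,d\tilde\mu^{\PS}_{gpN}(gpn)\,d\tilde\nu_{gP}(gp),
\]
and similarly for $\Phi$. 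The key geometric mechanism is that $a_t$ expands along $N=N^+$ and contracts along $N^-$, so $\langle a_t\Psi,\Phi\rangle$ is controlled by the transversal intersections of $a_{-t}(gP_\e N_\e)$ with $gP_\e N_\e$. I would first foliate the integral by $N^-$-leaves (or rather $P_\e$-leaves), and on each leaf reduce the matrix coefficient to an integral over $N_\e\times N_\e$ of the form $\int\int \Psi(gpn_1)\Phi(gp'n_2)$ weighted by PS-measures, where the $a_t$-translate forces $n_2$ to lie within distance $O(e^{-t})$ of a translate of $n_1$.

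Next I would introduce the Haar (Lebesgue) measure: by the product formulas (1)--(3) in \S2.2, $d\bms$, $d\m^{\Haar}$ and $d\m^{\BR}$ differ only in whether the $u^\pm$-factors use $\nu_o$ or $m_o$ and whether the exponent is $\delta$ or $n-1$. The strategy is to transfer the exponential decay rate from the Haar correlation function (for which the spectral gap and \cite{MO15} give a clean statement on $L^2(\G\bk G,\m^{\Haar})$, valid precisely because $\delta>\frac{n-1}{2}$ makes $\phi_0\in L^2$) to the BMS correlation function. Concretely, on each local transversal I would compare the PS-density $e^{\delta\beta_{(gn)^+}(o,gn)}d\nu_o$ with the Lebesgue density $e^{(n-1)\beta_{(gn)^+}(o,gn)}dm_o$; the ratio is essentially a bounded density against $\nu_o$, and Sullivan's shadow lemma controls $\nu_o$-measures of the relevant $e^{-t}$-shadows by $e^{-\delta t}$. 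Combining this with the $L^2$-decay from \cite{MO15} (applied to smooth bump functions on $xB_\e$, whose Haar-Sobolev norms are comparable to $\cS_l(\Psi),\cS_l(\Phi)$ since $\m^{\Haar}(X)<\infty$) produces the error term $O(e^{-\eta t}\cS_l(\Psi)\cS_l(\Phi))$ with $\eta$ depending only on the spectral gap and $l$ only on $\dim G$.

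The main obstacle — and the reason the proposition is stated as a self-contained lemma — is making the implied constant \emph{absolute}, i.e. independent of $\e$ and of the location of $x$ in the cusp. In \cite{MO15} the implied constants depend on $\m^{\BR}(\supp\Psi)$, which a priori blows up as $x$ moves deep into a cusp; Lemma \ref{br2} and its corollary (bounding $\m^{\BR}$ by $\sqrt{\m^{\Haar}}$, hence $\m^{\BR}(X)<\infty$) are exactly what is needed to tame this, but one must carefully re-examine every step of \cite{MO15}'s argument to verify that the only place the support enters is through this $\m^{\BR}$-mass and through Sobolev norms, both of which are now uniformly controlled. A secondary technical point is that the size $\e$ of the flow box is tied to $r_x$, and the "wavefront"/thickening estimates relating $a_t(xB_\e)$ to a genuine transversal must be done with $\e$-uniform constants; here the fact that $G_\e\asymp B_\e=P_\e N_\e$ for small $\e$, together with Lipschitz bounds on the Busemann cocycle on the unit neighborhood $X$ of $\supp(\m)$, should suffice. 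Once these uniformities are in place, the decay rate $\eta$ is extracted from the $L^2$-spectral gap exactly as in \cite{MO15}, and the proof concludes.
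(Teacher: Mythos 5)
Your high-level picture is right and matches the paper's strategy at the level of architecture: foliate the matrix coefficient by the $PN$-decomposition, reduce to leaf-wise integrals over $N_\e$, transfer decay information from the Haar world to the BMS world, and use Lemma~\ref{br2} to keep the $\m^{\BR}$-masses under uniform control so that the constants do not degenerate as $x$ moves into a cusp. You also correctly isolate the real work as making every step $\e$-uniform rather than proving anything conceptually new.

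However, the middle of your argument -- the engine that actually converts Haar-correlation decay into BMS-correlation decay -- is not correct as stated. You propose to ``compare the PS-density $e^{\delta\beta_{(gn)^+}(o,gn)}d\nu_o$ with the Lebesgue density $e^{(n-1)\beta_{(gn)^+}(o,gn)}dm_o$; the ratio is essentially a bounded density against $\nu_o$.'' When $\delta < n-1$ the measures $\nu_o$ and $m_o$ on $\partial\bH^n$ are mutually singular (the Patterson--Sullivan measure lives on the fractal limit set), so there is no bounded Radon--Nikodym derivative between the PS- and Lebesgue-leaf measures, and this comparison cannot work. The Stratmann--Velani shadow lemma does not enter at this stage either. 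What the paper does instead is invoke two specific \emph{intermediate} results from \cite{MO15}: Lemma~6.2, which replaces the inner $N$-integral of $\Psi(\cdot\,a_t)\phi(\cdot)$ by a finite sum over the transversal-intersection set $\cP_x(t)=\{p\in P_\e: xpN_\e a_t\cap xpN_\e\ne\emptyset\}$, sandwiched by slightly thickened/shrunken test functions $\psi_{\e_0}^{\pm}$ and $\phi^{\pm}_{e^{-t}\e_0}$; and the proof of Theorem~6.7, which equidistributes that sum to the product $\nu_{xP}(\psi^\pm)\cdot\mu^{\PS}(\phi^\pm)$ with an exponential error whose coefficient is $A_\Psi^{\BR}A_\phi^{\PS}+\cS_{2,l}(\Psi)\cS_{2,l}(\phi)$. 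The spectral input and the singular-vs.-absolutely-continuous bookkeeping live inside those two lemmas, not in any pointwise density comparison. One cannot simply apply Theorem~6.16 of \cite{MO15} to bump functions, because Theorem~6.16 is precisely the statement being re-proved with uniform constants; the paper has to open its proof and retrace Lemma~6.2 and Theorem~6.7 with explicit $\e$- and support-dependence. A second, secondary omission is the step, after integrating the leaf estimates over $xP_\e$, of bounding $\int_{xP_\e}\cS_{2,l}(\phi)\,d\nu_{xP}$ by $\cS_{\infty,l}(\Phi)$ via the chain $\m^{\BR}(xP_\e N_\e)\ll\sqrt{\m^{\Haar}(xP_\e N_\e K)}\ll\sqrt{\mu^{\Leb}_{xpN}(xpN_\e)}$; this Cauchy--Schwarz trade is where Lemma~\ref{br2} does its other job, and your sketch only uses it once (to make $\m^{\BR}(X)<\infty$). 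So: right skeleton, right diagnosis of the difficulty, but the central transfer mechanism needs to be replaced by the MO15 equidistribution machinery rather than a density argument.
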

\begin{proof} Fix $\Phi, \Psi\in C^\infty(xB_{\e})$.
In the case when $\frac{n-1}2< \delta\le n-2$, we assume that $\Phi, \Psi\in C^\infty(xB_{\e}M)$ are $M$-invariant.
We have
\begin{align*} \la a_t \Psi, \Phi \ra
&=\int_{xp\in xP_{\e}}\int_{xpN_{\e}} \Psi(xpna_t) \Phi(xpn) d\mu_{xpN}^{\PS} (xpn) d\nu_{xP}(xp).
\end{align*}
Now, for fixed $p\in P_\e$, letting $\phi=\Phi |_{xpN_{\e}}\in C^\infty_c(xpN_\e)$, we estimate the inner integral 
\begin{equation}
\int_{xpN_{\e}} \Psi(xpna_t) \Phi(xpn) d\mu_{xpN}^{\PS} (xpn)=\int_{xpN_{\e}} \Psi(xpna_t) \phi(xpn) d\mu_{xpN}^{\PS} (xpn)
\end{equation}
as follows.

Fix a small $0<\e_0<\e^2$ and consider the functions $\Psi_{\e_0}^{\pm}$ on $\G\bk G$ defined by 
$$\Psi_{\e_0}^{+} (y)= \sup_{g\in G_{\e_0}} \Psi(yg),\quad
\Psi_{\e_0}^{-} (y)= \inf_{g\in G_{\e_0}} \Psi(yg)$$ 
and let 
$$\psi_{\e_0}^{\pm}(xp)=\int_{xpN}\Psi_{\e_0}^{\pm} (xpn) d\mu_{xpN}^{\PS}(xpn).$$
We then have that $$\nu_{xP}(\psi_{\e_0}^{\pm})=\m(\Psi_{\e_0}^{\pm})
\quad \text{ and} \quad \int_{xP_{\e}}\mu_{xpN}^{\PS}(\phi)d\nu_{xP}(xp)=\m(\Phi) .$$
Moroever, since $\Psi(x)=\Psi_{\e_0}^\pm(x)+O(\e_0\cS_{\infty,1}(\Psi))$, we get that 
$$\bms(\Psi_{\e_0}^\pm)=\bms(\Psi)+O(\e_0\cS_{\infty,1}(\Psi)),$$
where we used that $\bms(X)<\infty$.
We will also use the notation  
$$\phi^+_{\e_1}(y):=\sup_{n\in N_{\e_1}} \phi(yn),$$  
and similarly get that 
$\mu_{yN}^{\PS}(\phi_{\e_1}^{+})=\mu_{yN}^{\PS}(\phi)+O(\e_1 \cS_{\infty,1}(\phi))$.

Now by (\cite[Lem. 6.2]{MO15}, \cite{EO}),  there exists some absolute constant $c>0$, such that 
the integral \begin{equation}\label{e:innerint}
\int_{xpN_{\e}} \Psi(xpna_t) \phi(xpn) d\mu_{xpN}^{\PS} (xpn)
\end{equation}
 is bounded from above and below, respectively, by 
$$
 (1\pm c\epsilon_0) e^{-\delta t} \sum_{p\in \cP_x(t)} \psi_{c\e_0}^{\pm}(xp) \phi^\pm_{ce^{-t}\e_0} (xpa_{-t}),$$
where $\cP_x(t)$ is the finite set defined by 
 $$\cP_x(t)=\{p\in P_{\e}: xpN_{\e} a_t\cap xpN_{\e}\ne \emptyset\}.$$
Moreover, by the proof of \cite[ Thm. 6.7]{MO15}, there are positive constants  $\eta>0$ (depending only on the spectral gap of $\Gamma$) and  $\alpha>0$ such that
\begin{align*} 
e^{-\delta t} \sum_{p\in \cP_x(t)} \psi_{\e_0}^{\pm}(xp) \phi^+_{e^{-t}\e_0} (xpa_{-t}) &= \nu_{xP}(\psi_{\e_0}^{\pm}) \mu_{xpN}^{\PS}(\phi^{\pm}_{e^{-t}\e_0})\\
&+
O( e^{-\eta t} + \e_0^{\alpha}) A_\Psi^{\BR} A_\phi^{\PS}+ O(e^{-\eta t} \cS_{2,l} (\Psi) \cS_{2,l}(\phi))
\end{align*}
where  
$$A_\Psi^{\BR}:=\cS_{\infty,1}(\Psi) \m^{\BR} (\text{supp}(\Psi))\ll \cS_{\infty,1}(\Psi),$$
(by Lemma \ref{br2}), and
 $$A_\phi^{\PS}:=\cS_{\infty,1}(\phi) \mu_{xpN}^{\PS} (\text{supp}(\phi))\leq \cS_{\infty,1}(\phi)\mu_{xpN}^{\PS} (xpN_\e).$$
 Notice that the injectivity radii of the supports of $\phi$ and $\psi$ are at least $\e$ and since we chose $\e_0\ll\e^2$ much smaller, all the implied constants are absolute and independent of $\e$ and $\e_0$.

Combining these results and estimating 
$$\nu_{xP}(\psi_{\e_0}^\pm)=\m(\Psi_{\e_0}^\pm)=\bms(\Psi)+O(\epsilon_0 \cS_{\infty,1}(\Psi)),$$ 
and 
$$\mu_{xpN}^{\PS}(\phi^{\pm}_{e^{-t}\e_0})=\mu_{xpN}^{\PS}(\phi)+O(\epsilon_0 \cS_{\infty,1}(\phi)),$$
we get that 
\begin{multline*}
\int_{xpN_{\e}} \Psi(xpna_t) \phi(xpn) d\mu_{xpN}^{\PS} (xpn)= \bms(\Psi)\mu_{xpN}^{\PS}(\phi)(1+O(\epsilon_0))\\
+O(\epsilon_0 \cS_{\infty,1}(\Psi)\cS_{\infty,1}(\Psi))
+O( e^{-\eta t} + \e_0^{\alpha}) \cS_{\infty,1}(\Psi)\cS_{\infty,1}(\phi)\mu_{xpN}^{\PS} (xpN_\e)\\
+ O(e^{-\eta t} \cS_{2,l} (\Psi) \cS_{2,l}(\phi)).
\end{multline*}

Since all implied constants are independent of $\e_0$, taking the limit as $\e_0\to 0$ 
gives 
\begin{eqnarray*}
&&\int_{xpN_{\e}} \Psi(xpna_t) \phi(xpn) d\mu_{xpN}^{\PS} (xpn)=\bms(\Psi)\mu_{xpN}^{\PS}(\phi)\\
&&+O( e^{-\eta t}\cS_{l}(\Psi)\cS_{l}(\Phi)\mu_{xpN}^{\PS} (xpN_\e)))+ O(e^{-\eta t} \cS_{2,l} (\Psi) \cS_{2,l}(\phi))
\end{eqnarray*}
where we used that $\cS_{l}(\phi)\leq \cS_{l}(\Phi)$.

Now, integrating over $xP_\e$, and noting that
$\int_{xP_\e}\mu_{xpN}^{\PS}(\phi)d\nu_{xP}(xp)=\bms(\Phi)$, 
the main term is indeed $\bms(\Psi)\bms(\Phi)$. Next, since 
$$\int_{xP_\e}\mu_{xpN}^{\PS} (xpN_\e))d\nu_{xP}(xp)=\int_{xP_\e}\int_{xpN_\e}d\mu_{xpN}^{\PS} d\nu_{xP}(xp)=\bms(B_\e)\leq 1,$$
the integral of the first remainder term is bounded by $O(e^{-\eta t}\cS_{l}(\Psi)\cS_{l}(\Phi))$. For the second remainder term, we bound 
$\cS_{2,l}(\phi)\leq \cS_{l}(\Phi)\sqrt{\mu_{xpN}^{\Leb}(xpN_\e)}$ to get that 

\begin{eqnarray*}
\int_{xP_\e}\cS_{2,l}(\Phi_{|_{xpN_\e}})d\nu_{xP}(xp)&\ll& 
\cS_{l}(\Phi)(\mu_{xpN}^{\Leb}(xpN_\e))^{-1/2}\int_{xP_\e}\int_{xpN_\e}d\mu_{xpN}^{\Leb}(xpn)d\nu_{xP}(xp)\\
&=&\cS_{l}(\Phi)(\mu_{xpN}^{\Leb}(xpN_\e))^{-1/2}\m^{\BR}(xP_\e N_\e).
\end{eqnarray*}
We now use Lemma \ref{br2} to bound
$$\m^{\BR}(xP_\e N_\e)\leq \m^{\BR}(xP_\e N_\e K)\ll \sqrt{\m^{\Haar}(xP_\e N_\e K)},$$
and since there is a uniform constant $c>0$ such that $P_\e N_\e K\subseteq P_{c\e}K$, noting that $P_\e K=N^{-}_\e A_{\e} K$,  we can bound 
$$\m^{\Haar}(xP_\e N_\e K)\ll \m^{\Haar}(xP_{c\e}K)\ll \mu_{xpN}^{\Leb}(xpN_\e)$$ to get that 
\begin{eqnarray*}
\int_{xP_\e}\cS_{2,l}(\Phi_{|_{xpN_\e}})d\nu_{xP}(xp)\ll \cS_{l}(\Phi).
\end{eqnarray*}

Combining the two remainder terms, and bounding all norms by $\cS_l(\Psi)\cS_l(\Phi)$ we get that 
\begin{eqnarray*} 
\la a_t \Psi, \Phi \ra =\bms(\Psi)\bms(\Phi)+O(e^{-\eta t}\cS_l(\Psi)\cS_l(\Phi))
\end{eqnarray*}
where the implied constant is absolute.
\end{proof}

\subsection{General test functions}
We now use a partition of unity to reduce the case of a general test function to the case of functions with small support.

For $\epsilon\in(0,1)$ sufficiently small, let $Q_\e$ be a maximal family of points in $X\cap Y_\e$ such that the sets $yB_{\e^3}$, $ y\in Q_\epsilon$,
are disjoint and meet $Y_{2\e}$, and let $Q_\e':=\{y\in Q_\e: yB_{\e^2}\cap Y_{4\e}\ne \emptyset\}$. 
Note that the collection $\{yB_{\e^2}:y\in Q_\e\}$ covers $X\cap Y_{2\e}$ and
 that $\{yB_{\e^3}B_{\e^3}  : y\in Q_\e'\}$ covers $X\cap Y_{4\e}$. Since $\m^{\Haar} (X)<\infty$,
 we have $\# Q_\e= O(\e^{-3\dim(G)})$.
 
Fix a non-negative function $ \beta_\e\in C^\infty(B_\e)$ taking values in $[0,1]$  which is $1$ on $B_{\e^3}B_{\e^3}$ and $0$ outside $B_{\e^2}$ (note that $B_{\e^3}B_{\e^3}\subseteq B_{2\e^3}\subset B_{\e^2}$). We can choose $\beta_{\e}$ so that $\cS_{l}(\beta_{\e})\ll \epsilon^{-3l}$.
For each $y\in Q_\e$, define a function  $\beta_{y,\epsilon}(yb):=\beta_\epsilon(b)$ on $yB_\epsilon$.
\begin{lem}
For any $x\in\bigcup_{y\in Q_{\e}'} yB_{\e^2}$, we have  
$$\sum_{z\in Q_\e}\beta_{z,\epsilon}(x)\geq 1.$$
\end{lem}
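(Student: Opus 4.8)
The goal is to show that the functions $\{\beta_{z,\epsilon}\}_{z\in Q_\e}$ form a partition of unity over $X\cap Y_{4\e}$, in the weak sense that their sum is $\ge 1$ near every point of $Y_{4\e}$; the precise statement to prove is that for $y\in Q_\e'$ and $x\in yB_{\e^2}$ one has $\sum_{z\in Q_\e}\beta_{z,\epsilon}(x)\ge 1$. The natural strategy is to produce a single $z\in Q_\e$ (in fact one can take $z=y$, but it is cleaner to argue abstractly) for which $\beta_{z,\epsilon}(x)=1$, since each summand is nonnegative.

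First I would recall the relevant definitions: $\beta_{z,\epsilon}(zb)=\beta_\epsilon(b)$ where $\beta_\epsilon\in C^\infty(B_\epsilon)$ equals $1$ on $B_{\e^3}B_{\e^3}$, and $B_{\e^3}B_{\e^3}\subseteq B_{2\e^3}\subset B_{\e^2}$. So to get $\beta_{z,\epsilon}(x)=1$ it suffices to write $x=zb$ with $b\in B_{\e^3}B_{\e^3}$. Since $y\in Q_\e'$ we have $yB_{\e^2}\cap Y_{4\e}\ne\emptyset$, and more importantly $y$ is close to a point of $Y_{4\e}$ at scale $\e^2$. The key point from maximality of $Q_\e$: $Q_\e$ is maximal with the property that the balls $zB_{\e^3}$, $z\in Q_\e$, are disjoint and meet $Y_{2\e}$. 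Maximality means that any point $w\in Y_{2\e}$ lies within $B_{\e^3}B_{\e^3}$-distance of some $z\in Q_\e$ — otherwise $wB_{\e^3}$ could be added, contradicting maximality (one must check $wB_{\e^3}$ would still be disjoint from all $zB_{\e^3}$ and meets $Y_{2\e}$, which holds because disjointness of $B_{\e^3}$-balls fails exactly when centers are within $B_{\e^3}B_{\e^3}^{-1}\subseteq B_{c\e^3}$, up to the usual quasi-group inclusions $B_\e^{-1}\subseteq B_{c\e}$, $B_\e B_\e\subseteq B_{c\e}$).

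The main step is then a covering/triangle-inequality argument: given $x\in yB_{\e^2}$ with $y\in Q_\e'$, I would first locate a point $w\in Y_{4\e}\subseteq Y_{2\e}$ with $x\in wB_{c\e^2}$ for a uniform constant $c$ (using $yB_{\e^2}\cap Y_{4\e}\ne\emptyset$ and $x\in yB_{\e^2}$, so $x$ and some point of $Y_{4\e}$ are both within $B_{\e^2}$ of $y$, hence within $B_{c\e^2}$ of each other). By maximality applied to $w\in Y_{2\e}$, there is $z\in Q_\e$ with $w\in zB_{c'\e^3}$, hence $z\in zB_{\e^3}\cap Y_{2\e}\ne\emptyset$ confirms $z$ is a legitimate center, and $x\in zB_{c'\e^3}B_{c\e^2}\subseteq zB_{c''\e^2}\subseteq zB_{\e^3}B_{\e^3}$ once $\e$ is small enough that $c''\e^2\le \e^3$... \emph{wait} — that inclusion fails for small $\e$ since $\e^2\gg\e^3$.

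\textbf{Correction to the plan.} The containment I actually need is $x\in zB_{\e^3}B_{\e^3}$, and this cannot come from an $\e^2$-scale estimate. The right reading is that $z$ should be chosen with the stronger property $x\in zB_{\e^3}$ directly: since $x\in yB_{\e^2}$ and the $B_{\e^3}$-balls around $Q_\e$-points cover $Y_{2\e}$ up to $B_{\e^3}B_{\e^3}$, one applies maximality not to $w$ but uses that the point $x$ itself (or rather the nearby point of $Y_{2\e}$) forces some $z\in Q_\e$ with $x\in zB_{\e^3}B_{\e^3}$. Concretely: pick $w\in Y_{4\e}$ with $x\in wB_{\e^3}$; this is possible because $x\in yB_{\e^2}$, $yB_{\e^2}\cap Y_{4\e}\ne\emptyset$, and... no — $x$ need not be within $\e^3$ of $Y_{4\e}$. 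So actually the lemma as stated must be using that $x\in yB_{\e^2}$ together with $y\in Q_\e'$ to conclude $\beta_{y,\epsilon}(x)$ is not the relevant term; rather some other $z$ with $zB_{\e^3}$ passing through $x$'s neighborhood. I would resolve this by the following clean argument: the sets $zB_{\e^3}$, $z\in Q_\e$, are \emph{not} assumed to cover, but the maximality gives that $Y_{2\e}\subseteq\bigcup_{z\in Q_\e}zB_{\e^3}B_{\e^3}^{-1}B_{\e^3}\subseteq\bigcup_z zB_{c\e^3}$; and then, since $\beta_\epsilon=1$ on $B_{\e^3}B_{\e^3}\supseteq B_{c\e^3}$ for the correct choice of the constants built into $\beta_\epsilon$ (one takes $\beta_\epsilon=1$ on a ball large enough to absorb these quasigroup constants), any $x\in Y_{2\e}$ has $\beta_{z,\epsilon}(x)=1$ for at least one $z$. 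Finally one upgrades from $x\in Y_{2\e}$ to $x\in yB_{\e^2}$ with $y\in Q_\e'$ by noting $yB_{\e^2}\subseteq Y_{2\e}$ (as $y\in Q_\e\subseteq X\cap Y_\e$ and $yB_{\e^2}$ stays in $Y_{2\e}$ for small $\e$), and then $x\in Y_{2\e}$ and we are done.

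\textbf{Main obstacle.} The entire difficulty is bookkeeping the quasigroup constants: $B_\e^{-1}\subseteq B_{c\e}$, $B_\e B_{\e'}\subseteq B_{c(\e+\e')}$, and ensuring the support/equal-to-one regions of $\beta_\epsilon$ are chosen with enough slack ($\beta_\epsilon\equiv 1$ on $B_{\e^3}B_{\e^3}$, supported in $B_{\e^2}$, with $B_{\e^3}B_{\e^3}\subset B_{2\e^3}\subset\!\subset B_{\e^2}$) that these constants are absorbed. I expect no conceptual content beyond the standard maximality-of-a-net argument; I would present it by fixing once and for all a constant $c_0$ dominating all the relevant quasigroup constants, checking $\e$ is small enough that $c_0\e^3<\e^2$, and then running the two-line covering argument.
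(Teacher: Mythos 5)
Your final ``clean argument'' is structurally the right one and is close to what the paper does: show that the balls $zB_{\e^3}B_{\e^3}$, $z\in Q_\e$, cover $Y_{2\e}$ by invoking maximality, then reduce the lemma to showing $x\in Y_{2\e}$. (Your first attempt, applying maximality to a point $w\in Y_{4\e}$ at distance $\asymp\e^2$ from $x$, is indeed a dead end for exactly the scale reason you noticed: you only control $z$ to within $\e^2$ of $x$, not $\e^3$.) The paper's proof is a case split -- either $x\in yB_{\e^3}B_{\e^3}$ and one takes $z=y$, or else $x$ is far from $y$ at scale $\e^3$ and maximality applied to $x$ itself produces the $z$ -- but this is essentially your covering argument unwound, so the two are morally the same.

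The genuine gap is in the step you need to finish: showing $x\in Y_{2\e}$. You write that $yB_{\e^2}\subseteq Y_{2\e}$ ``as $y\in Q_\e\subseteq X\cap Y_\e$''. But $Y_{2\e}\subsetneq Y_\e$ is the \emph{smaller} set (larger injectivity radius), so $y\in Y_\e$ tells you only that $r_y\ge\e$, and moving by $\e^2$ to a nearby $x$ gives $r_x\ge\e-O(\e^2)$ -- nowhere near $2\e$. In fact if $y\in Q_\e\setminus Q_\e'$ this step can fail outright, which is why the lemma is stated with the hypothesis $y\in Q_\e'$. The hypothesis $y\in Q_\e'$, i.e.\ $yB_{\e^2}\cap Y_{4\e}\ne\emptyset$, is exactly what supplies the missing injectivity-radius gain: it gives $b\in B_{\e^2}$ with $yb\in Y_{4\e}$, hence $yG_{3\e}=(yb)b^{-1}G_{3\e}\subseteq (yb)G_{4\e}$, so $y\in Y_{3\e}$, and only then does $x\in yB_{\e^2}$ give $x\in Y_{2\e}$. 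You actually reached for $yB_{\e^2}\cap Y_{4\e}\ne\emptyset$ earlier in the proposal, but used it to look for a point of $Y_{4\e}$ at distance $\e^3$ from $x$ (which need not exist) and then dropped it; the right use of that fact is to upgrade $y$ from $Y_\e$ to $Y_{3\e}$, not to move to a nearby auxiliary point.
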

\begin{proof}
Let $y\in Q'_\e$. If $x\in yB_{\e^3}B_{\e^3}$, then  $\beta_{y,\e}(x)=1$ and hence $\sum_{z\in Q_\e}\beta_{z,\epsilon}(x)\geq 1$.
Now suppose that $x\in yB_{\e^2}\setminus yB_{\e^3}B_{\e^3}$, in which case
$xB_{\e^3}\cap yB_{\e^3}=\emptyset$. Since $y\in Y_{3\e}$ and $x\in yB_{\e^2}$ we have that $x\in Y_{2\e}\cap X$. By the maximality of $Q_\e$,
 there exists $z\in Q_\e$ such that $xB_{\e^3}\cap zB_{\e^3}\neq \emptyset$. This implies that $x\in zB_{\e^3}B_{\e^3}$ and hence $\beta_{z,\e}(x)\geq 1$.
\end{proof}
Now consider
the normalized function (supported on $yB_\e$) given by
$$\alpha_{y, \e}:=\frac{\beta_{y, \e}}{\sum_{z\in Q_\e} \beta_{z, \e}} .$$
\begin{lem}\label{l:alphaybound}
For any $y\in Q_\e'$, we have
 $\cS_{l}(\alpha_{y, \e})\ll \e^{-p},$
 where  $p$ and the implied constant depend only on $l$ and $\dim(G)$. 
 \end{lem}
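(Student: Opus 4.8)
The plan is to estimate $\cS_{\infty,l}(\alpha_{y,\e})$ by a quotient rule, writing $\alpha_{y,\e}=\beta_{y,\e}\cdot(1/S)$ with $S:=\sum_{z\in Q_\e}\beta_{z,\e}$. The key structural input is the preceding lemma: for $y\in Q_\e'$ we have $S\ge 1$ on $yB_{\e^2}$, hence (by continuity of $S$) on $y\overline{B_{\e^2}}\supseteq\supp(\beta_{y,\e})$. In particular $\alpha_{y,\e}$ is a genuine element of $C^\infty(\G\bk G)$, supported in $yB_\e$, and it suffices to bound the sup-norms of $D\alpha_{y,\e}$ for monomials $D$ of order $\le l$ at points of $\supp(\beta_{y,\e})$, where $1/S$ is smooth and bounded by $1$.

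First I would record that $\cS_{\infty,l}(\beta_{z,\e})=\cS_{\infty,l}(\beta_\e)\ll\e^{-3l}$ uniformly in $z$: since $\beta_{z,\e}(x)=\beta_\e(z^{-1}x)$ and the invariant differential operators $D$ commute with left translations, one has $D\beta_{z,\e}(x)=(D\beta_\e)(z^{-1}x)$, hence the same sup-norm. Next I would bound $\cS_{\infty,l}(S)$ using bounded overlap of the cover $\{z\overline{B_{\e^2}}\}_{z\in Q_\e}$: a derivative $D\beta_{z,\e}(x)$ vanishes unless $x\in z\overline{B_{\e^2}}$, i.e. $z\in x(\overline{B_{\e^2}})^{-1}$, and for such $z$ the pairwise disjoint sets $zB_{\e^3}$ all lie in one ball $xB_{c\e^2}$; since $\vol(gB_r)\asymp r^{\dim G}$ uniformly in $g$, there are at most $O(\e^{-\dim G})$ of them. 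Thus for each monomial $D$ of order $\le l$,
\[
\|DS\|_\infty\le\sup_{x}\sum_{z:\,x\in z\overline{B_{\e^2}}}|D\beta_{z,\e}(x)|\ll\e^{-\dim G}\e^{-3l},
\]
so $\cS_{\infty,l}(S)\ll\e^{-(\dim G+3l)}$, the number of monomials of order $\le l$ being a constant depending only on $l$ and $\dim G$.

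Finally I would combine these via the Leibniz rule and the chain rule for the scalar map $u\mapsto 1/u$ (Fa\`a di Bruno's formula). On $\supp(\beta_{y,\e})$, where $S\ge 1$, a monomial $D$ of order $k\le l$ expands $D(1/S)$ as a sum of a universally bounded (depending only on $l$) number of terms $\pm\,S^{-(m+1)}\prod_{j}D_jS$ with $m\le k$ and $\sum_j\ord(D_j)=k$; since $S^{-(m+1)}\le 1$ there and each $\|D_jS\|_\infty\ll\e^{-(\dim G+3l)}$, we get $\|D(1/S)\|_\infty\ll_l\e^{-l(\dim G+3l)}$. A further application of Leibniz to $D(\beta_{y,\e}\cdot(1/S))$ then gives $\|D\alpha_{y,\e}\|_\infty\ll_l\e^{-3l}\e^{-l(\dim G+3l)}$, and summing over the boundedly many monomials of order $\le l$ yields $\cS_{\infty,l}(\alpha_{y,\e})\ll\e^{-p}$ with $p=3l+l(\dim G+3l)$, depending only on $l$ and $\dim G$; outside $\supp(\beta_{y,\e})$ all derivatives of $\alpha_{y,\e}$ vanish, so the bound is global. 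The step I expect to require the most care is the uniform bounded-overlap count: one must verify that pairwise disjoint translates of $B_{\e^3}$ fitting inside a fixed translate of $B_{c\e^2}$ number only $O(\e^{-\dim G})$, which rests on the two-sided volume comparison $\vol(gB_r)\asymp r^{\dim G}$ uniform in $g$ together with an inclusion $(\overline{B_{\e^2}})^{-1}B_{\e^3}\subseteq B_{c\e^2}$ valid for all small $\e$ (obtained from $G_\e\asymp B_\e$); everything else is routine differentiation bookkeeping with non-commuting invariant vector fields.
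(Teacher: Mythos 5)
Your proof is correct and follows essentially the same structure as the paper's: write $\alpha_{y,\e}=\beta_{y,\e}/s_\e$, use the preceding lemma to get $s_\e\geq 1$ on the support of $\beta_{y,\e}$, bound $\cS_{\infty,l}(s_\e)$, and conclude by differentiating the quotient. The one place you differ is in bounding $\cS_{\infty,l}(s_\e)$: the paper simply multiplies the uniform bound $\cS_{\infty,l}(\beta_{z,\e})\ll\e^{-3l}$ by the crude global count $\#Q_\e\ll\e^{-3\dim G}$, giving $\cS_{\infty,l}(s_\e)\ll\e^{-3(l+\dim G)}$; you instead observe that at any fixed point $x$ only the $z$ with $x\in z\overline{B_{\e^2}}$ contribute, and a bounded-overlap count (pairwise disjoint translates $zB_{\e^3}$ fitting inside a single $xB_{c\e^2}$, compared via $\vol(gB_r)\asymp r^{\dim G}$) shows there are only $O(\e^{-\dim G})$ of those, yielding the sharper $\cS_{\infty,l}(s_\e)\ll\e^{-(\dim G+3l)}$. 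This is a modest quantitative improvement to the exponent $p$, but both bounds are polynomial in $\e^{-1}$ with exponent depending only on $l$ and $\dim G$, which is all the lemma asks for. Your explicit Leibniz/Fa\`a di Bruno bookkeeping fills in what the paper summarizes as "taking derivatives of the quotient," and is correct; the care you flagged about lifting the disjoint balls from $\G\bk G$ to $G$ works because all points of $Q_\e$ lie in $Y_\e$, where the injectivity radius is at least $\e$, so representatives can be chosen coherently inside one fundamental-domain neighborhood.
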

 \begin{proof}
 Let $s_\e(x)=\sum_{z\in Q_\e} \beta_{z, \e}(x)$ so that $\alpha_{y,\e}(x)=\frac{\beta_{y,\e}(x)}{s_\e(x)}$.
Since $\alpha_{y,\e}$ is supported on $yB_{\e^2}$, we only need to bound its derivatives there in which case we have that $s_\e(x)=\sum_{z\in Q_\e} \beta_{z, \e}(x)\geq 1$. Taking derivatives of the quotient  $\alpha_{y,\e}=\frac{\beta_{y,\e}}{s_\e}$ and using the bound $s_\e(x)\geq 1$ together with the bound
 $\cS_{\infty, l}(s_\e)\ll \e^{-3l}\# Q_{\e}\ll \e^{-3(l+\dim(G))}$ proves the lemma. \end{proof}
\begin{lem}
The function $\tau_\e:=\sum_{y\in Q_\e'} \alpha_{y, \e}$ belongs to $C^\infty(X)$ and satisfies that
$0\le \tau_\e \le 1$, $\tau_\e=1$ on $X\cap Y_{4\e}$, and $\tau_\e=0$ outside $Y_{\e}$.
\end{lem}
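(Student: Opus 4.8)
The statement is the standard assertion that a partition of unity constructed from bump functions has the expected properties, so the plan is to verify smoothness first and then the three pointwise conditions, leaning on the two preceding lemmas. Throughout I will write $s_\e:=\sum_{z\in Q_\e}\beta_{z,\e}$, so that $\alpha_{y,\e}=\beta_{y,\e}/s_\e$ wherever $s_\e>0$, with the natural convention that $\alpha_{y,\e}=0$ wherever $\beta_{y,\e}=0$ (in particular wherever $s_\e=0$, since $0\le\beta_{y,\e}\le s_\e$).

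First I would show $\tau_\e\in C^\infty(X)$. Since $\beta_\e$ is supported in $B_{\e^2}$, the support of $\beta_{y,\e}$ lies in the compact set $\overline{yB_{\e^2}}$. For $y\in Q_\e'$ the previous lemma gives $s_\e\ge1$ on $yB_{\e^2}$, hence $s_\e\ge1$ on $\overline{yB_{\e^2}}$ by continuity, and therefore $s_\e$ stays bounded below by $\tfrac12$ on an open neighbourhood $U$ of $\supp(\beta_{y,\e})$. On $U$ the quotient $\beta_{y,\e}/s_\e$ is smooth (the denominator being smooth and nonvanishing), while on the open set $X\setminus\supp(\beta_{y,\e})$ the function $\alpha_{y,\e}$ is identically $0$; the two descriptions agree on the overlap, so $\alpha_{y,\e}\in C^\infty(X)$. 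As $s_\e$ is a finite sum of smooth nonnegative functions and $\#Q_\e=O(\e^{-3\dim(G)})$ is finite, $\tau_\e=\sum_{y\in Q_\e'}\alpha_{y,\e}$ is a finite sum of smooth functions, hence smooth on $X$.

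Next I would check the three bounds. For $0\le\tau_\e\le1$: each $\alpha_{y,\e}\ge0$, and since $Q_\e'\subseteq Q_\e$ we have $\tau_\e\le\sum_{z\in Q_\e}\alpha_{z,\e}$, which equals $s_\e/s_\e=1$ where $s_\e>0$ and $0$ elsewhere, so $\tau_\e\le1$. For the value on $X\cap Y_{4\e}$: given such $x$, the fact that $\bigcup_{y\in Q_\e'}yB_{\e^3}B_{\e^3}$ covers $X\cap Y_{4\e}$ gives some $y_0\in Q_\e'$ with $x\in y_0B_{\e^3}B_{\e^3}$, whence $\beta_{y_0,\e}(x)=1$ because $\beta_\e\equiv1$ on $B_{\e^3}B_{\e^3}$; moreover any $z\in Q_\e\setminus Q_\e'$ has $\beta_{z,\e}(x)=0$, since otherwise $x\in zB_{\e^2}$ (as $\beta_\e$ vanishes outside $B_{\e^2}$), forcing $zB_{\e^2}\cap Y_{4\e}\ne\emptyset$ and contradicting $z\notin Q_\e'$. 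Hence $s_\e(x)=\sum_{y\in Q_\e'}\beta_{y,\e}(x)\ge1$ and $\tau_\e(x)=s_\e(x)/s_\e(x)=1$. Finally, for vanishing outside $Y_\e$: the proof of the previous lemma already showed $y\in Q_\e'\Rightarrow y\in Y_{3\e}$; since the injectivity radius is $1$-Lipschitz for $d$ and $B_{\e^2}\subseteq G_{c\e^2}$, any $x\in\supp(\alpha_{y,\e})\subseteq\overline{yB_{\e^2}}$ satisfies $r_x\ge 3\e-c\e^2>\e$ for $\e$ small, so $x\notin X(\e)$; thus $\tau_\e$ vanishes on $X\setminus Y_\e=X(\e)$.

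\textbf{Expected main obstacle.} The only genuinely delicate point is the smoothness of $\alpha_{y,\e}=\beta_{y,\e}/s_\e$ across the locus where $s_\e$ might vanish; this is resolved precisely by the preceding lemma, which guarantees $s_\e\ge1$ on all of $yB_{\e^2}$ (hence on a neighbourhood of $\supp(\beta_{y,\e})$) for every $y\in Q_\e'$. Everything else is routine bookkeeping with the support conditions on $\beta_\e$ and the definitions of $Q_\e$ and $Q_\e'$.
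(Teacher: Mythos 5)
Your proof is correct and follows essentially the same route as the paper's, verifying the three pointwise bounds directly from the support conditions on $\beta_\e$, the covering property of $\{yB_{\e^3}B_{\e^3}\}$, and the fact (established in the preceding lemma's proof) that $y\in Q_\e'$ forces $y\in Y_{3\e}$ and hence $yB_{\e^2}\subseteq Y_\e$. You are somewhat more careful than the paper on the point of smoothness across the boundary of $\{s_\e>0\}$, which the paper leaves implicit (it instead records a Sobolev-norm bound not actually in the lemma statement); your gluing of the two smooth descriptions on overlapping opens is the right way to make that rigorous.
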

\begin{proof}
Since $\tau_\e=\frac{\sum_{y\in Q_\e'}\beta_{y,\e}}{\sum_{y\in Q_\e}\beta_{y,\e}}$, it is clear that $0\leq \tau_\e\leq 1$. 
Note that if $y\in Q_\e \setminus Q_{\e}'$, then $yB_{\e^2}\cap Y_{4\e}=\emptyset$. Hence 
 if $x\in X\cap Y_{4\e}$ satisfies
$x\not\in yB_{\e^2}$, then $\beta_{y,\e}(x)=0$. This shows that $\sum_{y\in Q_\e}\beta_{y,\e}(x)=\sum_{y\in Q_\e'}\beta_{y,\e}(x)$. Moroever, since $X\cap Y_{4\e}$ is covered by $\{yB_{\e^3}B_{\e^3}:y\in Q_\e'\}$, we have that $\sum_{y\in Q_\e'}\beta_{y,\e}(x)\neq 0$ on $X\cap Y_{4\e}$ and hence indeed $\tau_\e=1$ there.  Next, since for any $y\in Q_{\e}'$, we have that $yB_{\e^2}\subseteq Y_{\e}$; so $\tau_{\e}(x)=0$ outside of $Y_{\e}$. Finally we can bound
$\cS_{l}(\tau_{\e})\leq \sum_{y\in Q_{\e}'}\cS_{l}\alpha_{y,\e}\ll \e^{-p+3\dim(G)}.$ \end{proof}

\begin{proof}[Proof of Theorem \ref{t:mmix}]
Suppose first that $\delta>\max\{\tfrac{n-1}{2},n-2\}$. 
Now, for given $\Psi ,\Phi\in C^\infty(X)$, 
consider 
$$\Psi_\e:=\Psi\cdot \tau_\e=\sum_{y\in Q_\e'} \Psi\cdot \alpha_{y, \e}\;\;\;\text{and}\;\; \;\Phi_\e:=\Phi\cdot \tau_\e= \sum_{y\in Q_\e'} \Phi\cdot \alpha_{y, \e}.$$
Note that $\cS_{l}(\Psi \cdot \alpha_{y,\e})\ll \cS_{l}(\alpha_{y,\e})\cS_{l}(\Psi)\ll \e^{-p}\cS_{l}(\Psi),$ with $p$ as in Lemma \ref{l:alphaybound}. Now
applying Proposition  \ref{p:effe} to each $\Psi \cdot \alpha_{y, \e}$ and $\Phi \cdot \alpha_{y', \e}$ for  $y,y'\in Q_\e'$, and recalling that $\#Q_\e=O(\e^{-3\dim G})$,
we get that 
\begin{eqnarray}\label{mmm} 
\la a_t \Psi_\e, \Phi_\e \ra &=& \bms(\Psi_\e) \bms(\Phi_\e)+O(\e^{-p_0}e^{-\eta t}\cS_{l}(\Psi)\cS_{l}(\Phi))
\end{eqnarray}
with $p_0=2p+6\dim(G)$.

It follows from  \eqref{e:mXepsilon} that  for $\delta_0:=2\delta -\kappa_{\rm max}>0$,
$$\m(\Psi-\Psi_\e) \le \|\Psi\|_\infty \m(X_{4\e}) \ll  \e^{\delta_0} \|\Psi\|_\infty,$$
and similarly $ \m(\Phi-\Phi_\e) \ll  \e^{\delta_0} \|\Phi\|_\infty $. 
Hence
$$|\la a_t \Psi, \Phi \ra-\la a_t \Psi_\e, \Phi_\e \ra|\ll \e^{\delta_0} \|\Psi\|_\infty\|\Phi\|_\infty.$$
We then deduce
\begin{eqnarray*}
\la a_t \Psi, \Phi\ra &=& \bms(\Psi) \bms(\Phi)+O(\epsilon^{\delta_0}\|\Psi\|_\infty\|\Phi\|_\infty)+O(\e^{-p_0}e^{-\eta t}\cS_{l}(\Psi)\cS_{l}(\Phi)).
\end{eqnarray*}
Taking $\epsilon=e^{-\frac{\eta t}{\delta_0+p_0}} $ and recalling that $\cS_{\infty,0}\ll \cS_l$, we get that 
\begin{eqnarray*}
\la a_t \Psi, \Phi\ra &=& \bms(\Psi) \bms(\Phi)+O(e^{-\eta_0 t}\cS_l(\Psi)\cS_l(\Phi))
\end{eqnarray*}
with $\eta_0=\frac{\eta \delta_0}{\delta_0+p_1}$. This concluds the proof when $\delta> \max\{\tfrac{n-1}{2},n-2\}$.

Finally, for $n>3$, if $\frac{n-1}2< \delta\le n-2$, and $\Psi$ and $\Phi$ are $M$-invariant,
we can replace $\alpha_{y, \e}$ with an $M$-invariant function $\alpha^M_{y,\e}(x)=\int_M \alpha_{y,\e}(xm) dm$ and run the same argument
to get \eqref{mmm}. Then the rest of the proof is identical.
\end{proof}

\section{Shrinking target problems}\label{s:shrinking}
We now use the results on the exponential decay of matrix coefficients to prove an effective mean ergodic theorem and apply it to various shrinking target problems. 
As before, we assume that $\Gamma$ is a geometrically finite, Zariski dense subgroup of $G=\op{SO}(n,1)^\circ$. 
{ For $n\geq 5$, in the case where $\Gamma$ has a cusp and $\delta\leq n-2$, all functions and shrinking targets
on $X$ we consider below are  assumed to be $M$-invariant so that Theorem \ref{mix} applies to them.
All functions  below are also assumed to be real-valued functions.}

\begin{rem}
While we state our results for the geodesic flow on geometrically finite hyperbolic manifolds, we note that the results in this section are quite general and hold for any dynamical system on a measure space $(X,\bms)$ for which one has exponential decay of correlation in the sense of  Theorem \ref{t:mmix}
\end{rem}

\subsection{Effective mean ergodic theorem}\label{s:EMET}
Fix $\ell$ as given in Theorem \ref{t:mmix}.  For notational convenience, we introduce the norm
$$\cS^*(\Psi):=\frac{\cS_l(\Psi)}{\|\Psi\|} \quad\text{for any non-zero $\psi\in C^\infty(X)\cap L^2(X,\m)$, }$$
where $\|\Psi\|$ denotes the $L^2$-norm of $\Psi$.
In the entire section, we will take $\lambda$ to be either the Lebesgue measure on $\R$ (when considering a continuous time flow) or the counting measure on $\Z$ (for a discrete time flow). For $T\geq 1$, consider the averaging operator $\lambda_T$ on $L^2(X,\bms)$ given by 
$$\lambda_T(\Psi)(x)=\frac{1}{T}\int_0^T\Psi(xa_t)d\lambda(t).$$

\begin{thm} \label{t:MET}\label{mean}
 For any non-zero $\Psi\in C^\infty(X)$, and for all $T\gg 1$,
$$\|\lambda_T(\Psi)-\bms(\Psi)\|^2 \ll \frac{(1+\log(\cS^*(\Psi) ))\cdot \|\Psi\|^2}{T}.$$
\end{thm}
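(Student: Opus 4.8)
The plan is to establish the effective mean ergodic estimate by a standard ``spectral gap for the averaging operator'' argument, using Theorem~\ref{t:mmix} as the only dynamical input. Write $\Psi_0=\Psi-\bms(\Psi)$, so $\bms(\Psi_0)=0$ and $\cS_l(\Psi_0)\le \cS_l(\Psi)+|\bms(\Psi)|\,\cS_l(\mathbf 1)\ll \cS_l(\Psi)$ (using $\bms(X)<\infty$), while $\|\Psi_0\|\le\|\Psi\|$. Expanding the square,
\[
\|\lambda_T(\Psi_0)\|^2=\frac{1}{T^2}\int_0^T\int_0^T \langle a_{s-t}\Psi_0,\Psi_0\rangle\, d\lambda(s)\,d\lambda(t),
\]
and after using $\langle a_u\Psi_0,\Psi_0\rangle=\langle a_{-u}\Psi_0,\Psi_0\rangle$ (unitarity of $a_u$ on $L^2(X,\bms)$ plus self-adjointness coming from the time-reversal symmetry of $\bms$) one reduces to a single integral of $\langle a_u\Psi_0,\Psi_0\rangle$ against a weight of mass $O(1/T)$ supported in $|u|\le T$.

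First I would split that integral at a threshold $u=U$ to be chosen. For $|u|\le U$ I bound $|\langle a_u\Psi_0,\Psi_0\rangle|\le\|\Psi_0\|^2$ trivially (Cauchy--Schwarz and unitarity), contributing $O\!\left(\tfrac{U}{T}\|\Psi\|^2\right)$. For $|u|>U$ I apply Theorem~\ref{t:mmix} to $\Psi_0$ (note $\Psi_0\in C^\infty(X)$ is bounded since $X$ has finite Haar measure and $\Psi$ is bounded), getting $|\langle a_u\Psi_0,\Psi_0\rangle|\ll e^{-\eta_0|u|}\cS_l(\Psi_0)^2\ll e^{-\eta_0|u|}\cS_l(\Psi)^2$; integrating the exponential over $|u|>U$ against the $O(1/T)$ weight gives $O\!\left(\tfrac{1}{T}e^{-\eta_0 U}\cS_l(\Psi)^2\right)$. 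Thus
\[
\|\lambda_T(\Psi)-\bms(\Psi)\|^2\ll \frac{1}{T}\Big(U\,\|\Psi\|^2+e^{-\eta_0 U}\cS_l(\Psi)^2\Big).
\]
Now choose $U=\tfrac{1}{\eta_0}\log\big(\cS_l(\Psi)^2/\|\Psi\|^2\big)=\tfrac{2}{\eta_0}\log\cS^*(\Psi)$ when this is positive; then both terms are $\ll \|\Psi\|^2(1+\log\cS^*(\Psi))$, and when $\cS^*(\Psi)\le 1$ one simply takes $U=O(1)$ and uses the trivial bound, absorbing everything into the ``$1+$''. This yields exactly the claimed bound $\|\lambda_T(\Psi)-\bms(\Psi)\|^2\ll \tfrac{(1+\log\cS^*(\Psi))\|\Psi\|^2}{T}$, uniformly for $T\gg 1$.

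The one genuinely delicate point is the reduction ``$\langle a_u\Psi_0,\Psi_0\rangle=\langle a_{-u}\Psi_0,\Psi_0\rangle$'': this requires that the geodesic flow is measure-preserving and \emph{self-adjoint} on $L^2(X,\bms)$, i.e. that $\bms$ is invariant under the flip $u\mapsto u^{-}$ exchanging the two endpoints — which is visible from the explicit product formula for $d\bms$ in terms of $d\nu_o(u^+)d\nu_o(u^-)ds$. If one prefers to avoid invoking this symmetry, one can instead keep the double integral, change variables to $(u,t)=(s-t,t)$, and bound $\int\int$ directly; the Fubini bookkeeping is then the only routine nuisance. A secondary technical check is that Theorem~\ref{t:mmix} applies to $\Psi_0$ rather than to $\Psi$ itself — this is immediate since subtracting the constant $\bms(\Psi)$ changes neither the smoothness class nor, up to an absolute constant, the Sobolev norm, and the hypothesis that $\Psi$ be supported in a one-neighborhood of $\supp(\bms)$ is here replaced by the stronger standing assumption $\Psi\in C^\infty(X)$. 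I expect the choice-of-$U$ optimization and these two compatibility checks to be the whole content; there is no serious obstacle beyond correctly handling the regime $\cS^*(\Psi)\le 1$ so that the logarithm does not go negative.
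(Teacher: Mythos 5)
Your argument is correct and is essentially the same as the paper's: the paper expands $\|\lambda_T\Psi\|^2$, uses translation-invariance of $\lambda$ to collapse the double integral to a single integral of $\langle a_t\Psi,\Psi\rangle$ against the weight $\tfrac{T-|t|}{T^2}$ over $|t|\le T$, splits at a threshold $M$, bounds trivially for $|t|<M$ and by Theorem~\ref{t:mmix} for $|t|\ge M$, and then optimizes $M=\tfrac{2}{\eta_0}\log\cS^*(\Psi)$ --- exactly the same split-and-optimize scheme you describe. Subtracting the mean first (working with $\Psi_0=\Psi-\bms(\Psi)$) versus subtracting $\bms(\Psi)^2$ at the end (as the paper does via $\|\lambda_T\Psi-\bms(\Psi)\|^2=\|\lambda_T\Psi\|^2-\bms(\Psi)^2$) is a cosmetic difference.

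One small correction: the symmetry $\langle a_{-u}\Psi_0,\Psi_0\rangle=\langle a_u\Psi_0,\Psi_0\rangle$ that you flag as the ``genuinely delicate point'' does \emph{not} require the time-reversal (flip) symmetry of $\bms$. It is simply $\langle a_{-u}f,g\rangle=\langle f,a_u g\rangle$, i.e.\ unitarity of the flow on $L^2(X,\bms)$ (a change of variable $x\mapsto xa_u$ using only $\bms$-invariance under the geodesic flow), combined with real-valuedness of the test functions (a standing assumption in the paper). The paper sidesteps the issue entirely by writing the reduction to a single integral directly over $t\in[-T,T]$ with weight $T-|t|$, which is the cleaner bookkeeping; the mixing theorem is then invoked for $|t|\ge M$, and one only needs the observation above to extend Theorem~\ref{t:mmix}, stated for $t\ge 1$, to negative times.
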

\begin{proof}
Since we have
$\|\lambda_T(\Psi)-\bms(\Psi)\|^2=\|\lambda_T(\Psi)\|^2-\bms(\Psi)^2,$
it is enough to estimate $\|\lambda_T(\Psi)\|^2$.
Now, expand
\begin{eqnarray*}
\|\lambda_T\Psi\|^2&=&\frac{1}{T^2}\int_0^T\int_0^T \int_{X}\Psi(xa_{t_1-t_2}) {\Psi(x)}d\bms(x)d\lambda(t_1)d\lambda(t_2)\\
&=&\frac{1}{T^2}\int_{-T}^T  \int_{X}\Psi(xa_{t}) {\Psi(x)}d\bms(x)(T-|t|)d\lambda(t)\\
\end{eqnarray*}
where we used that  $\lambda$ is translation invariant and $\lambda([0,T)\cap [t,t+T))=T-|t|$ (where in the discrete case we may and will assume that $t$ and $T$ are integers).

Now fix a large parameter $M$ to be determined later. For $|t|\geq M$ large we use Theorem \ref{mix} to get that 
$$ \int_{X}\Psi(xa_{t}) {\Psi(x)}d\bms(x)=\bms(\Psi)^2+O(\cS(\Psi)^2e^{-\eta_0 |t|}),$$
for some $\eta_0\in (0,1)$. On the other hand,
for $|t|<M$ small, we bound $\<a_t\Psi,\Psi\> \leq \|\Psi\|^2$, to get that
\begin{eqnarray*}
\|\lambda_T\Psi\|^2&=&\bms(\Psi)^2+O(\|\Psi\|^2\tfrac{M}{T})+O(\tfrac{\cS(\Psi)^2e^{-\eta_0 M}}{T}),\\
\end{eqnarray*}
using $\bms(\Psi)\leq \|\Psi\|$.  Using these estimates, we get that
$$
\|\lambda_T(\Psi)-\bms(\Psi)\|^2\ll \frac{M\|\Psi\|^2+\cS(\Psi)^2e^{-\eta_0 M}}{T}.
$$
It remains to set $M={2\log(\cS^*(\psi))}{\eta_0}^{-1}$ to finish the proof.
\end{proof}

Following \cite{Kel17}, for a non-negative function $\Psi$ on $X$, we define 
\begin{align} \label{r:Ctf}
&\cC_{T,\Psi}:=\{x\in X: |\lambda_T(\Psi)(x)-\bms(\Psi)|\geq \tfrac{\bms(\Psi)}{2}\};\\
 &\cC_{T,\Psi}^o:=\{x\in X: \lambda_T(\Psi)(x)=0\}.
\end{align}
Note that  
$\cC_{T,\Psi}^o\subseteq \cC_{T,\Psi}$. 

As a direct consequence of the effective mean ergodic theorem, we get the  following bounds:
\begin{prop}\label{p:mCf}
For a non-negative $\Psi\in C^\infty(X)$  and $T\geq 1$, we have 
$$\bms(\cC_{T,\Psi})\ll \frac{\log({\cS^*(\Psi)}) \|\Psi\|^2}{T \cdot \bms(\Psi)^2}.$$
\end{prop}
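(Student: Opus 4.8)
The plan is to deduce Proposition \ref{p:mCf} directly from the effective mean ergodic theorem (Theorem \ref{t:MET}) by an elementary Chebyshev-type argument. The key observation is that the set $\cC_{T,\Psi}$ is defined precisely so that on it the fluctuation $|\lambda_T(\Psi)(x)-\bms(\Psi)|$ is bounded below by $\tfrac{\bms(\Psi)}{2}$, so its measure can be estimated by the second moment of this fluctuation, which is exactly the quantity controlled by Theorem \ref{t:MET}.

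\begin{proof}
Let $\Psi\in C^\infty(X)$ be non-negative and $T\geq 1$. If $\bms(\Psi)=0$ there is nothing to prove, so assume $\bms(\Psi)>0$, and in particular $\Psi$ is not identically zero so that $\cS^*(\Psi)$ is defined. By the definition of $\cC_{T,\Psi}$ in \eqref{r:Ctf}, for every $x\in \cC_{T,\Psi}$ we have
$$
|\lambda_T(\Psi)(x)-\bms(\Psi)|^2 \geq \frac{\bms(\Psi)^2}{4}.
$$
Integrating this inequality over $\cC_{T,\Psi}$ with respect to $\bms$ gives
$$
\frac{\bms(\Psi)^2}{4}\,\bms(\cC_{T,\Psi}) \leq \int_{\cC_{T,\Psi}} |\lambda_T(\Psi)(x)-\bms(\Psi)|^2\, d\bms(x) \leq \|\lambda_T(\Psi)-\bms(\Psi)\|^2,
$$
where in the last step we extended the integral from $\cC_{T,\Psi}$ to all of $X$ and used the definition of the $L^2(X,\bms)$-norm. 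By Theorem \ref{t:MET}, the right-hand side is $\ll \frac{(1+\log(\cS^*(\Psi)))\|\Psi\|^2}{T}$, and since we may assume $T\gg 1$ so that $1+\log(\cS^*(\Psi)) \ll \log(\cS^*(\Psi))$ (note $\cS^*(\Psi)\geq 1$ for $T$ large as $\cS_l$ dominates the $L^2$-norm up to a constant, so in fact $1+\log(\cS^*(\Psi))\asymp \log(\cS^*(\Psi))$ once $\cS^*(\Psi)$ is bounded away from $1$; otherwise absorb the constant), we obtain
$$
\bms(\cC_{T,\Psi}) \leq \frac{4\|\lambda_T(\Psi)-\bms(\Psi)\|^2}{\bms(\Psi)^2} \ll \frac{\log(\cS^*(\Psi))\,\|\Psi\|^2}{T\cdot \bms(\Psi)^2},
$$
as claimed.
\end{proof}

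There is essentially no obstacle here: the proposition is a routine corollary, and the only minor point requiring care is the passage from the factor $1+\log(\cS^*(\Psi))$ appearing in Theorem \ref{t:MET} to the factor $\log(\cS^*(\Psi))$ stated in the proposition, which is harmless since $\cS^*(\Psi)$ is bounded below (the Sobolev norm $\cS_l$ controls $\cS_{\infty,0}$ which controls the $L^2$-norm on the finite-measure set $X$), so the two expressions are comparable up to an absolute constant.
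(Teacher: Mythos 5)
Your proof is correct and is essentially identical to the paper's: both apply the Chebyshev/second-moment inequality on $\cC_{T,\Psi}$ and then invoke Theorem~\ref{t:MET}. The only difference is that you explicitly flag the discrepancy between the factor $1+\log(\cS^*(\Psi))$ in Theorem~\ref{t:MET} and the factor $\log(\cS^*(\Psi))$ in the proposition's statement, a point the paper silently elides; your justification (that $\cS_l$ dominates the $L^2$-norm on the finite-measure set $X$, so $\cS^*(\Psi)\geq 1$) is the right reason, though strictly speaking one would also want $\cS^*(\Psi)$ bounded away from $1$, which in practice holds for the test functions used downstream.
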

\begin{proof}
On one hand, 
$$\|\lambda_{T}(\Psi)-\bms(\Psi)\|^2\geq \int_{\cC_{T,\Psi}} |\lambda_{T}(\Psi)(x)-\bms(\Psi)|^2d\bms\geq \frac{(\bms(\Psi))^2\bms(\cC_{T,\Psi})}{4}.$$
On the other hand, by Theorem \ref{t:MET},
$$\|\lambda_{T}(\Psi)-\bms(\Psi)\|^2\ll \frac{\log({\cS^*(\Psi)}) \|\Psi\|^2}{T}.$$
Putting these two together gives the result. \end{proof}

Having control on the measures of these subsets has immediate consequences to several shrinking target problems. 
Indeed, a simple adaptation of \cite[Lemmas 13 and 14]{Kel17} gives the following result.
\begin{lem}\label{l:Cfk}
Let $\{\Psi_t\}_{t\geq 1}\subseteq L^2(X,\bms)$ be a decreasing family of bounded non-negative functions.
\begin{enumerate}
\item If  $\sum_j \bms(\cC^o_{t_{j-1}, \Psi_{t_j}})<\infty$ for some subsequence $t_j\to \infty$,
then for $\bms$-a.e. $x\in X$, 
 $$\lambda_t \Psi_t(x)\neq 0\quad\text{ for all $t\gg_x 1$. }$$
\item If  there exists $C>1$ such that $\bms(\Psi_{2^j})\le C\cdot  \bms(\Psi_{2^{j+1}})$ for all $j\gg 1$ and 
$\sum_j \bms(\cC_{2^{j-1},\Psi_{2^j}})<\infty,$
then for $\bms$-a.e. $x\in X$, $$\lambda_t(\Psi_t)(x)\geq \frac{\bms(\Psi_t)}{4C}
\quad \text{for all $t\gg_x 1$}
.$$
\item If   there exists $C>1$ such that $\bms(\Psi_{2^j})\leq C\cdot  \bms(\Psi_{2^{j+1}})$ for all $j\gg 1$ and $\sum_j \bms(\cC_{2^{j+1},\Psi_{2^j}})<\infty,$
then for $\bms$-a.e. $x\in X$, 
$$\lambda_t(\Psi_t)(x)\leq  (4C )\cdot \bms(\Psi_t)\quad\text{ for all $t\gg_x 1$}.$$
\end{enumerate}
\end{lem}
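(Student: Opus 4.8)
The plan is to deduce all three assertions from the Borel--Cantelli lemma together with two elementary monotonicity facts, along the lines of \cite{Kel17}. First, for any non-negative measurable $\Psi$ and either choice of $\lambda$, the function $T\mapsto T\lambda_T(\Psi)(x)=\int_{[0,T]}\Psi(xa_t)\,d\lambda(t)$ is non-decreasing, so that for $0<T_1\le T\le T_2$ one has
$$\tfrac{T_1}{T_2}\,\lambda_{T_1}(\Psi)(x)\le\lambda_T(\Psi)(x)\le\tfrac{T_2}{T_1}\,\lambda_{T_2}(\Psi)(x).$$
Second, since $\{\Psi_t\}$ is a decreasing family, $s\ge t$ forces $\lambda_T(\Psi_s)(x)\le\lambda_T(\Psi_t)(x)$ and $\bms(\Psi_s)\le\bms(\Psi_t)$. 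Together these let me transfer an estimate for $\lambda_T(\Psi_{T'})$ proved at the dyadic (or subsequence) endpoints of an interval to all intermediate times $t$, losing at most a factor $2$ from the interpolation inequality with $T_1/T_2=1/2$ and, in (2) and (3), a factor $C$ from the doubling hypothesis $\bms(\Psi_{2^j})\le C\,\bms(\Psi_{2^{j+1}})$.

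For part (1), Borel--Cantelli applied to the convergent series $\sum_j\bms(\cC_{t_{j-1},\Psi_{t_j}}^o)$ shows that for $\bms$-a.e.\ $x$ there is $j_0=j_0(x)$ with $\lambda_{t_{j-1}}(\Psi_{t_j})(x)>0$ for all $j\ge j_0$. Given $t\in[t_{j-1},t_j]$ with $j\ge j_0$, monotonicity of the family ($t\le t_j$) and then of $T\lambda_T$ ($t\ge t_{j-1}$) give $\lambda_t(\Psi_t)(x)\ge\lambda_t(\Psi_{t_j})(x)\ge\frac{t_{j-1}}{t}\lambda_{t_{j-1}}(\Psi_{t_j})(x)>0$; since $\bigcup_{j\ge j_0}[t_{j-1},t_j]=[t_{j_0-1},\infty)$, this gives $\lambda_t(\Psi_t)(x)\ne 0$ for all $t\gg_x 1$.

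For part (3), Borel--Cantelli applied to $\sum_j\bms(\cC_{2^{j+1},\Psi_{2^j}})$ gives, for a.e.\ $x$ and all $j\ge j_0(x)$, $\lambda_{2^{j+1}}(\Psi_{2^j})(x)<\tfrac32\bms(\Psi_{2^j})$. For $t\in[2^j,2^{j+1}]$ with $j\ge j_0$, using $\Psi_t\le\Psi_{2^j}$ and then $T\lambda_T$ monotone,
$$\lambda_t(\Psi_t)(x)\le\lambda_t(\Psi_{2^j})(x)\le 2\,\lambda_{2^{j+1}}(\Psi_{2^j})(x)<3\,\bms(\Psi_{2^j})\le 3C\,\bms(\Psi_{2^{j+1}})\le 3C\,\bms(\Psi_t),$$
the last step because $\Psi_t\ge\Psi_{2^{j+1}}$; hence $\lambda_t(\Psi_t)(x)\le 4C\,\bms(\Psi_t)$ for all $t\gg_x 1$. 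Part (2) is symmetric: Borel--Cantelli on $\sum_j\bms(\cC_{2^{j-1},\Psi_{2^j}})$ yields $\lambda_{2^j}(\Psi_{2^{j+1}})(x)>\tfrac12\bms(\Psi_{2^{j+1}})$ for all $j$ large, and for $t\in[2^j,2^{j+1}]$, using $\Psi_t\ge\Psi_{2^{j+1}}$ and $T\lambda_T$ monotone,
$$\lambda_t(\Psi_t)(x)\ge\lambda_t(\Psi_{2^{j+1}})(x)\ge\tfrac12\,\lambda_{2^j}(\Psi_{2^{j+1}})(x)>\tfrac14\,\bms(\Psi_{2^{j+1}})\ge\tfrac1{4C}\,\bms(\Psi_{2^j})\ge\tfrac1{4C}\,\bms(\Psi_t),$$
the last step because $\Psi_t\le\Psi_{2^j}$.

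I do not expect any serious obstacle here: the mathematical content is just the two monotonicity facts and Borel--Cantelli. The only point requiring care is bookkeeping the indices, so that in parts (2) and (3) the interval $[2^j,2^{j+1}]$ on which one wants the conclusion is correctly flanked by the scales $2^{j\mp1}$ and the weights $\Psi_{2^j},\Psi_{2^{j+1}}$ appearing in the assumed convergent sums, and tracking the numerical constants so the final multiplicative factor comes out to exactly $4C$. The measure bounds that make the hypotheses verifiable in the applications are precisely those furnished by Proposition~\ref{p:mCf}, but they are not needed for the present deduction (and one uses $\cC_{T,\Psi}^o\subseteq\cC_{T,\Psi}$ only implicitly, in part (1)).
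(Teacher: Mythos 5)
Your proof is correct and is essentially the standard argument the paper has in mind when it says this is a ``simple adaptation of [Kel17, Lemmas 13 and 14]'': the paper does not spell out a proof, and the deduction you give (Borel--Cantelli applied to the $\cC$ or $\cC^o$ sets, combined with monotonicity of $T\mapsto T\lambda_T(\Psi)(x)$ and monotonicity of the family $\{\Psi_t\}$, followed by dyadic interpolation to fill in all $t$) is exactly what that adaptation amounts to. The index bookkeeping in parts (2) and (3), where the scale $2^{j\mp1}$ from the hypothesis flanks the interval $[2^j,2^{j+1}]$ and the doubling constant $C$ absorbs the mismatch between $\m(\Psi_{2^j})$ and $\m(\Psi_{2^{j+1}})$, is handled correctly, and the resulting constants $3C\le 4C$ and $\tfrac14C^{-1}\ge\tfrac1{4C}$ match the statement.
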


\subsection{Hitting along a subsequence}\label{regg}
In the rest of this section, let $\mathcal B=\{B_t\}$ be a family of shrinking targets in $X$.
Recall that a family $\cB$ is inner regular (resp. outer regular) 
if there exist $c>0,\alpha>0$ and smooth positive functions $0\leq \Psi_t^-\leq \Id_{B_t}$ (resp. $\Id_{B_t}\leq \Psi_t^+\leq c$) such that
\begin{itemize}
\item
$\bms(B_t)\leq c\cdot \bms(\Psi_t^-)$ (resp. $\bms(\Psi_t^+)\leq c\cdot \bms(B_t)$);
\item $\cS(\Psi_t^\pm)\leq c\cdot
\bms(B_t)^{-\alpha}$.  
\end{itemize}
A family $\cB$ is regular if it is  inner and outer regular. When we want to emphasize the parameters $c$ and $\alpha$, we say that a family is $(c,\alpha)$-regular.
 Our first application of the effective mean ergodic theorem is the following.
 
\begin{prop}\label{p:HSS}
 Assume that $\mathcal B$ is inner regular and satisfies 
$$\liminf_{t\to\infty}\tfrac{|\log(\bms(B_{t}))|}{ t \bms(B_{t})}=0.$$ Then there is a subsequence $t_j\to \infty$ such that for $\bms$-a.e. $x\in X$,
$$\lambda(\{t\leq t_j: xa_t\in B_{t_j}\})\gg t_j\bms(B_{t_j}) .$$
If $\mathcal B$ is also outer regular, then  for $\bms$-a.e. $x\in X$,
$$\lambda(\{t\leq t_j: xa_t\in B_{t_j}\})\asymp t_j\bms(B_{t_j}).$$
\end{prop}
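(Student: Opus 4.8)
The plan is to extract, from the effective mean ergodic theorem (Theorem~\ref{t:MET}) together with the regularity hypothesis, a sequence $t_j\to\infty$ along which the Borel--Cantelli-type sums controlling the bad sets $\cC_{2^{j-1},\Psi_{t_j}}$ converge, so that Lemma~\ref{l:Cfk} applies. First I would unwind the hypothesis $\liminf_{t\to\infty}\tfrac{|\log\bms(B_t)|}{t\bms(B_t)}=0$: it yields a sequence $s_j\to\infty$ with $\tfrac{|\log\bms(B_{s_j})|}{s_j\bms(B_{s_j})}\to 0$, and after passing to a subsequence we may assume this quantity is summable, say $\sum_j \tfrac{|\log\bms(B_{s_j})|}{s_j\bms(B_{s_j})}<\infty$. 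Then I would replace each $s_j$ by $t_j:=2^{k_j}$ where $2^{k_j-1}\le s_j<2^{k_j}$; since $T\mapsto \lambda_T$ improves with $T$ and $\bms(B_{t_j})\ge\bms(B_{s_j})$ up to the loss of a constant when passing between dyadic scales (here one must be mildly careful — the targets are nested so $\bms(B_{2^{k_j}})\le\bms(B_{s_j})$, so instead I would directly set $t_j=s_j$ and compare $\lambda_{t_j}$ against $\lambda_{2^{\lfloor\log_2 t_j\rfloor}}$, losing only a factor $2$ in $T$). The cleanest route is to just keep the sequence $t_j$ with $\sum_j\tfrac{|\log\bms(B_{t_j})|}{t_j\bms(B_{t_j})}<\infty$ and work with the averaging time $T=t_j$ directly.

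Next I would apply Proposition~\ref{p:mCf} to the inner-regularizing functions $\Psi^-_{t_j}$. Since $0\le\Psi^-_{t_j}\le \Id_{B_{t_j}}$ we have $\|\Psi^-_{t_j}\|^2\le\bms(B_{t_j})$, and inner regularity gives $\bms(\Psi^-_{t_j})\ge c^{-1}\bms(B_{t_j})$ and $\cS^*(\Psi^-_{t_j})\le c\,\bms(B_{t_j})^{-\alpha}/\|\Psi^-_{t_j}\|\ll \bms(B_{t_j})^{-\alpha-1/2}$, so $\log\cS^*(\Psi^-_{t_j})\ll |\log\bms(B_{t_j})|$. Feeding these bounds into Proposition~\ref{p:mCf} with $T=t_j$ gives
\[
\bms(\cC_{t_j,\Psi^-_{t_j}})\ll \frac{|\log\bms(B_{t_j})|\cdot\bms(B_{t_j})}{t_j\,\bms(B_{t_j})^2}=\frac{|\log\bms(B_{t_j})|}{t_j\,\bms(B_{t_j})},
\]
which is summable by construction. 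Since $\cC^o_{T,\Psi}\subseteq\cC_{T,\Psi}$, the first part of Lemma~\ref{l:Cfk} (applied to the decreasing family $\{\Psi^-_t\}$, with the trivial observation that one may interpolate it to a family indexed by all $t$ by setting $\Psi^-_t=\Psi^-_{t_j}$ for $t\in[t_{j-1},t_j)$, or alternatively redo the Borel--Cantelli argument directly along $t_j$) shows that for $\bms$-a.e.\ $x$, $\lambda_{t_j}(\Psi^-_{t_j})(x)\neq 0$, and in fact the stronger conclusion $\lambda_{t_j}(\Psi^-_{t_j})(x)\ge\tfrac12\bms(\Psi^-_{t_j})\gg\bms(B_{t_j})$ for all large $j$, because off $\cC_{t_j,\Psi^-_{t_j}}$ we have $\lambda_{t_j}(\Psi^-_{t_j})(x)\ge\tfrac12\bms(\Psi^-_{t_j})$. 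Since $\Psi^-_{t_j}\le\Id_{B_{t_j}}$, this gives $\lambda(\{t\le t_j: xa_t\in B_{t_j}\})=t_j\,\lambda_{t_j}(\Id_{B_{t_j}})(x)\ge t_j\,\lambda_{t_j}(\Psi^-_{t_j})(x)\gg t_j\,\bms(B_{t_j})$, which is the first assertion.

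For the upper bound under the additional outer-regularity hypothesis, I would run the symmetric argument with $\Psi^+_{t_j}\ge\Id_{B_{t_j}}$: here $\|\Psi^+_{t_j}\|^2\le c^2\bms(\Psi^+_{t_j})\ll\bms(B_{t_j})$ and $\bms(\Psi^+_{t_j})\le c\,\bms(B_{t_j})$, so Proposition~\ref{p:mCf} again gives $\bms(\cC_{t_j,\Psi^+_{t_j}})\ll\tfrac{|\log\bms(B_{t_j})|}{t_j\bms(B_{t_j})}$, summable. Hence for $\bms$-a.e.\ $x$ and all large $j$, $\lambda_{t_j}(\Psi^+_{t_j})(x)\le\tfrac32\bms(\Psi^+_{t_j})\ll\bms(B_{t_j})$, so $\lambda(\{t\le t_j:xa_t\in B_{t_j}\})\le t_j\,\lambda_{t_j}(\Psi^+_{t_j})(x)\ll t_j\,\bms(B_{t_j})$; combined with the lower bound this yields the claimed $\asymp$. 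The main technical obstacle is the bookkeeping needed to invoke Lemma~\ref{l:Cfk} along the possibly sparse subsequence $t_j$ rather than along all dyadic times — one either checks that the proof of \cite[Lemmas 13, 14]{Kel17} only uses the convergence of the relevant series (so an arbitrary increasing index set is fine), or one quotes the Borel--Cantelli lemma directly to the sets $\cC_{t_j,\Psi^\pm_{t_j}}$ and argues by hand; I expect the former to be immediate from the cited proof. A secondary point requiring care is that $\lambda$ ranges over both Lebesgue measure on $\R$ and counting measure on $\Z$, but Theorem~\ref{t:MET} and Proposition~\ref{p:mCf} are stated uniformly for both, so no extra work is needed there.
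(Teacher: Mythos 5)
Your proof is correct, and it reaches the conclusion by a genuinely different measure-theoretic route than the paper. The paper applies the mean ergodic Theorem~\ref{t:MET} directly to the normalized function $\tilde\Psi_t=\Psi_t^-/\bms(\Psi_t^-)$, observes that the hypothesis $\liminf=0$ yields a subsequence $t_j$ with $\|\lambda_{t_j}(\tilde\Psi_{t_j})-1\|_{L^2}\to 0$, and then invokes the standard fact that $L^2$-convergence implies almost everywhere convergence along a further sub-subsequence. You instead first upgrade the $\liminf=0$ hypothesis to a subsequence along which $\frac{|\log\bms(B_{t_j})|}{t_j\bms(B_{t_j})}$ is \emph{summable}, then feed Proposition~\ref{p:mCf} into Borel--Cantelli to get that a.e.\ $x$ eventually misses $\cC_{t_j,\Psi^-_{t_j}}$, hence $\lambda_{t_j}(\Psi^-_{t_j})(x)\ge\tfrac12\bms(\Psi^-_{t_j})$ for $j$ large. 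Both arguments hinge on the same quantitative input and require an a priori passage to a subsequence; they are comparable in length, and yours actually delivers the slightly stronger ``for all large $j$'' form directly rather than via a sub-subsequence.

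One small point worth tightening: your parenthetical suggestion to apply Lemma~\ref{l:Cfk}(1) ``to the decreasing family $\{\Psi_t^-\}$'' is not quite available, since the inner-regularizing functions $\Psi_t^-$ need not be monotone in $t$ (only the sets $B_t$ are nested). Your fallback --- running the Borel--Cantelli argument directly on the sets $\cC_{t_j,\Psi^\pm_{t_j}}$ --- is the right move and suffices; Lemma~\ref{l:Cfk}(1) is not actually needed here and moreover only gives nonvanishing, not the $\gg\bms(B_{t_j})$ lower bound you correctly extract from escaping $\cC_{t_j,\Psi^-_{t_j}}$ rather than just $\cC^o_{t_j,\Psi^-_{t_j}}$. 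A cosmetic slip: $\|\Psi^+_{t_j}\|^2\le c\,\bms(\Psi^+_{t_j})$ rather than $c^2\bms(\Psi^+_{t_j})$, but this does not affect the conclusion.
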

\begin{proof}
Since $\mathcal B$ is inner regular, there are functions $\Psi_t\in  C^\infty(X)$ with $0\leq \Psi_{t}\leq \Id_{B_t}$ such that $\log(\cS^*(\Psi_t))\ll \log(\bms(B_t))$ and $\bms(\Psi_t)\gg\bms(B_t)$. The mean ergodic theorem (\thmref{mean}) applied to $\Psi_{t}$ implies that 
$$\|\lambda_{t}(\Psi_t)-\bms(\Psi_t)\|^2\ll \frac{(1+\log(\cS^*(\Psi_t)))\|\Psi_t\|^2}{t}.$$

Set $\tilde{\Psi}_t:=\frac{\Psi_t}{\bms(\Psi_t)}$ to get that
$$\|\lambda_t(\tilde \Psi_t)-1\|^2\ll \frac{(1+\log({\cS^*(\Psi_t)}))\|\Psi_t\|^2}{\bms(\Psi_t)^2 \cdot t}\ll \frac{|\log(\bms(B_t))|}{\bms(B_t) \cdot t},$$
where we used that $\|\Psi_t\|^2\leq \bms(B_t)$.
From our assumption, there is some subsequence $t_j$ such that $ \frac{\log(\bms(B_{t_j}))}{\bms(B_{t_j}) \cdot t_j}\to 0$. Hence  $\lambda_{t_j}(\tilde \Psi_{t_j})\to 1$ in $L^2(\G\bk G,\bms)$ and, after perhaps passing to another subsequence, we get
$\lambda_{t_j}(\tilde \Psi_{t_j})(x)\to 1$ for $\bms$-a.e $x\in X$.
For any $x$ in this full measure subset, the inequality $\lambda_{t_j}(\tilde \Psi_{t_j})(x)\leq \frac{\lambda(\{t\leq t_j: xa_t\in B_{t_j}\})}{t_j\bms(\Psi_{t_j})}$ implies that 
$\lambda(\{t\leq t_j: xa_t\in B_{t_j}\})\gg t_j\bms(B_{t_j})$ as claimed. 
Assuming  that$\{B_t\}$ is also outer regular,  repeating the same argument for functions approximating $\Id_{B_t}$ from above gives the other inequality.
\end{proof}

In particular, taking $\lambda$ to be the Lebesgue measure gives the first part of Theorem \ref{t8}. Moreover, by taking $\lambda$ to be the counting measure, we get the 
following consequence implying a discrete version of Theorem \ref{t7}(1).
\begin{cor}
 If $\mathcal B$ is inner regular and
$\liminf_{t\to\infty}\tfrac{|\log(\bms(B_{t}))|}{\bms(B_{t})t}=0$,
 then
$\{k\in \N :xa_k\in B_k\}$ is unbounded for $\bms$-a.e. $x\in X$.
\end{cor}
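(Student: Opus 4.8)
The plan is to read this off directly from Proposition~\ref{p:HSS}. Applying that proposition with $\lambda$ the counting measure on $\Z$, and using the hypotheses (inner regularity of $\{B_t\}$ together with $\liminf_{t\to\infty}|\log(\bms(B_t))|/(\bms(B_t)t)=0$), we obtain a subsequence $t_j\to\infty$ and a $\bms$-conull set $X'\subseteq X$ such that for every $x\in X'$,
\[
\#\{1\le k\le t_j:\ xa_k\in B_{t_j}\}\ \gg\ t_j\,\bms(B_{t_j})\qquad\text{for all }j\gg_x 1.
\]
So the only work left is to pass from this statement about the \emph{fixed} target $B_{t_j}$ to one about the \emph{moving} target $B_k$.

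This is where the shrinking-family structure enters. Since $B_s\subseteq B_t$ for $s>t$, we have $B_{t_j}\subseteq B_k$ whenever $1\le k\le t_j$, and hence
\[
\{1\le k\le t_j:\ xa_k\in B_{t_j}\}\ \subseteq\ \{1\le k\le t_j:\ xa_k\in B_k\}.
\]
I would also note that the subsequence $t_j$ furnished by Proposition~\ref{p:HSS} is precisely one along which $|\log(\bms(B_{t_j}))|/(\bms(B_{t_j})t_j)\to 0$; since $\bms(B_t)\to 0$ forces $|\log(\bms(B_{t_j}))|\to\infty$, this gives $t_j\,\bms(B_{t_j})\to\infty$. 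Combining these facts, for each $x\in X'$ and all $j\gg_x 1$,
\[
\#\{1\le k\le t_j:\ xa_k\in B_k\}\ \ge\ \#\{1\le k\le t_j:\ xa_k\in B_{t_j}\}\ \gg\ t_j\,\bms(B_{t_j}),
\]
and the right-hand side tends to $\infty$ with $j$. Hence $\{k\in\N:\ xa_k\in B_k\}$ is infinite, in particular unbounded, for every $x$ in the conull set $X'$, which is the assertion.

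I do not anticipate a genuine obstacle here: the statement is an essentially formal consequence of Proposition~\ref{p:HSS}, and the only points requiring any care are the elementary nesting inclusion $B_{t_j}\subseteq B_k$ (valid for $k\le t_j$) and the observation that $t_j\,\bms(B_{t_j})\to\infty$ along the relevant subsequence, both of which are immediate from the definition of a shrinking family and the standing hypothesis on the decay of $\bms(B_t)$.
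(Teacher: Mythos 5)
Your argument is correct and follows precisely the paper's own proof: apply Proposition \ref{p:HSS} with $\lambda$ the counting measure, use the nesting $B_{t_j}\subseteq B_k$ for $k\le t_j$, and observe that $t_j\,\bms(B_{t_j})\to\infty$ along the subsequence. You spell out the last point (deducing $t_j\,\bms(B_{t_j})\to\infty$ from the $\liminf$ hypothesis and $\bms(B_t)\to 0$) slightly more explicitly than the paper does, but the reasoning is identical.
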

\begin{proof}
Applying the above result with $\lambda$ the counting measure shows that for $\bms$-a.e. $x\in X$,
 $$\#\{k\leq t_j: xa_k\in B_{t_j}\}\gg t_j\bms(B_{t_j})\to\infty,$$ along some subsequence $t_j$. Since  $B_{t_j}\subseteq B_k$ for any $k\leq t_j$,
it follows that the subset $\{k:xa_k\in B_k\}$ is unbounded as well.
 \end{proof}

 \subsection{Orbits eventually always hitting}
The results of the previous section allow us to control how orbits hit the shrinking targets along a subsequence of times, however, under the same hypothesis we could also have different subsequences for which this asymptotic fails, and for which the set $\{k\leq k_j: xa_k\in B_{k_j}\}$ may even be empty (see e.g. \cite[Proposition 12]{Kel17}).  A more subtle question is to ask what conditions on the shrinking sets guarantee that the truncated orbit  $\{xa_j:j\leq  k\}$ is eventually always hitting the targets $B_k$, and moreover, how large is their intersection? 
This is the content of the following Theorem \ref{t:thresholdD}, which is a discrete version of  Theorem \ref{t7}(2).

 \begin{thm}\label{t:thresholdD}
Assume  that $\cB$ is inner regular and that
$\sum_{j=1}^\infty \frac{|\log(\bms(B_{t_j}))|}{t_{j-1}\bms(B_{t_j})}<\infty$ for some sequence $t_j\to \infty$.
Then for $\bms$-a.e. $x\in X$ and for all $t\gg_x 1$, we have  $\{k\in \N :k\leq t, \; xa_k\in B_t\}\neq \emptyset.$
\end{thm}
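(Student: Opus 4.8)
The plan is to deduce \thmref{t:thresholdD} from the effective mean ergodic theorem (Theorem~\ref{t:MET}) via the Borel--Cantelli-type machinery encapsulated in \lemref{l:Cfk}(1). The key observation is that if $\{k\in\N: k\le t,\ xa_k\in B_t\}=\emptyset$ for the counting-measure averaging operator $\lambda$, then in particular $\lambda_{t}(\Id_{B_t})(x)=0$, i.e. $x\in\cC^o_{t,\Id_{B_t}}$. So it suffices to show that for $\bms$-a.e. $x$, eventually $\lambda_t(\Psi_t)(x)\ne 0$ along \emph{all} $t$, where $\Psi_t$ is the smooth inner-regular approximant with $0\le\Psi_t\le\Id_{B_t}$; then a fortiori $\lambda_t(\Id_{B_t})(x)\ne 0$.

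First I would fix the inner-regularity data: since $\cB$ is inner regular there are $c,\alpha>0$ and $\Psi_t\in C^\infty(X)$ with $0\le\Psi_t\le\Id_{B_t}$, $\bms(\Psi_t)\gg\bms(B_t)$, and $\cS_l(\Psi_t)\ll\bms(B_t)^{-\alpha}$, hence $\log(\cS^*(\Psi_t))\ll|\log\bms(B_t)|$ (using $\|\Psi_t\|^2\le\bms(B_t)$ so that $\cS^*(\Psi_t)\le\cS_l(\Psi_t)/\bms(\Psi_t)\ll \bms(B_t)^{-\alpha-1}$). Next, apply \propref{p:mCf} with $\lambda$ the counting measure and $T=t_{j-1}$, $\Psi=\Psi_{t_j}$: since $\{B_t\}$ is decreasing and $t_{j-1}<t_j$, one has $\cC^o_{t_{j-1},\Psi_{t_j}}\subseteq\cC_{t_{j-1},\Psi_{t_j}}$ and
$$\bms(\cC^o_{t_{j-1},\Psi_{t_j}})\ll\frac{\log(\cS^*(\Psi_{t_j}))\,\|\Psi_{t_j}\|^2}{t_{j-1}\,\bms(\Psi_{t_j})^2}\ll\frac{|\log\bms(B_{t_j})|}{t_{j-1}\,\bms(B_{t_j})}.$$
By hypothesis the right-hand side is summable in $j$, so $\sum_j\bms(\cC^o_{t_{j-1},\Psi_{t_j}})<\infty$.

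Now I would invoke \lemref{l:Cfk}(1) applied to the decreasing family $\{\Psi_t\}$: this gives that for $\bms$-a.e.\ $x$, $\lambda_t(\Psi_t)(x)\ne 0$ for all $t\gg_x 1$. Unwinding the definition of the counting-measure averaging operator, $\lambda_t(\Psi_t)(x)=\tfrac1t\sum_{k=1}^{\lfloor t\rfloor}\Psi_t(xa_k)\ne 0$ forces some $k\le t$ with $\Psi_t(xa_k)>0$, hence $xa_k\in B_t$ (since $\Psi_t\le\Id_{B_t}$). Thus $\{k\in\N: k\le t,\ xa_k\in B_t\}\ne\emptyset$ for all $t\gg_x 1$, which is the claim. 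The only mild subtlety — and the step I would be most careful about — is checking that \lemref{l:Cfk}(1) really applies along the given subsequence $t_j$ rather than a dyadic one: its hypothesis is precisely $\sum_j\bms(\cC^o_{t_{j-1},\Psi_{t_j}})<\infty$ for \emph{some} subsequence, which we have verified, and the passage from ``eventually hitting along $t_j$'' to ``eventually hitting along all $t$'' uses monotonicity $B_t\subseteq B_{t_j}$ for $t\le t_j$ together with $B_t$ decreasing, exactly as in the proof of the corollary following \propref{p:HSS}. I do not anticipate a genuine obstacle here; the real content is all in Theorem~\ref{t:MET} and \lemref{l:Cfk}, which are already available.
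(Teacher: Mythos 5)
Your proposal is correct and follows essentially the same route as the paper's own proof: use inner regularity to produce smooth minorants $\Psi_t$, estimate $\bms(\cC^o_{t_{j-1},\Psi_{t_j}})$ via Proposition~\ref{p:mCf}, check summability from the hypothesis, and invoke Lemma~\ref{l:Cfk}(1) with $\lambda$ the counting measure. The only difference is cosmetic: you spell out the bound $\log(\cS^*(\Psi_t))\ll|\log\bms(B_t)|$ that the paper takes for granted, and your closing remark about passing ``from $t_j$ to all $t$'' is already built into the conclusion of Lemma~\ref{l:Cfk}(1), so it is not an extra step you need to supply.
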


\begin{proof} From the inner regularity, we can find smooth functions $0\leq \Psi_{t}\leq \Id_{B_{t}}$ satisfying $\log(\cS_l(\Psi_t))\ll \log(\bms(B_t))$ and $\bms(B_t)\ll \bms(\Psi_t)$. 
By Proposition \ref{p:mCf}, we can estimate  thatfor any $s,t>1$
$$\bms(\cC_{s,\Psi_t})\ll  \frac{|\log({\cS^*(\Psi_t)})| \cdot \|\Psi_t\|^2}{s \bms(\Psi_t)^2}\ll  \frac{|\log(\bms(B_k))| }{s \bms(B_t)}.$$
Since $\bms(\cC_{t_{j-1},\Psi_{t_j}})\ll  \frac{|\log(\bms(B_{t_j}))| }{t_{j-1} \bms(B_{t_j})}$, we obtain 
$\sum_j \bms(\cC_{t_{j-1},\Psi_{t_j}})<\infty$. Hence by the first part of Lemma \ref{l:Cfk}, we have that for $\bms$-a.e. $x\in X$,  
$\lambda_t \Psi_t(x)\neq 0$ for all sufficiently large $t$. Taking $\lambda$ to be the counting measure on $\N$, this implies that 
$\{k\in \N: k\leq t,\; xa_k\in B_t\}\ne \emptyset$ for all sufficiently large $t$.
\end{proof}

 Theorem \ref{qua} implies Theorem \ref{t8}(2).
\begin{thm}\label{t:asymp}\label{qua}
Assume that $\cB$ is regular and that $\bms(B_{2t})\asymp \bms(B_t)$. If
$\sum_{j=1}^\infty \frac{|\log(\bms(B_{2^j}))|}{2^j\bms(B_{2^j})}<\infty , $ then, for $\bms$-a.e. $x$, and for all  $t\gg_x 1$,
$$\frac{\#\{j \leq t : xa_j\in B_{t}\}}{t}\asymp \frac{\ell \{s \leq t : xa_s\in B_{t}\}}{t}\asymp \bms(B_t).$$
\end{thm}

\begin{proof} 
Let $\Psi_t^\pm$ be functions which approximate $\Id_{B_t}$ from above and below such that
$0\leq \Psi_t^-\leq \Id_{B_t}\leq \Psi_t^+\leq c$,
$\log(\cS_l(\Psi_t^\pm))\ll \log(\bms(B_t))$ and $\bms(\Psi_t^+)\asymp \bms (\Psi_t^-)\asymp \bms(B_t)$.  For each of these functions we can use  Proposition \ref{p:mCf} as before to estimate
$\bms(\cC_{s,\Psi^\pm_t})\ll  \frac{|\log(\bms(B_t))| }{s \bms(B_t)}$.
Taking $s=2^{j\pm1}$ and $t=2^j$, we get that $\sum_j \bms(\cC_{2^{j\pm 1},\Psi^\pm_{2^j}})<\infty$. So by the second and third part of Lemma \ref{l:Cfk} we get that 
for $\bms$-a.e. $x\in X$ and for all sufficiently large $t$, we have
$$\bms(B_t)\ll \bms(\Psi_t^-)\ll \lambda_t\Psi_t^-\leq \lambda_t(\Id_{B_t})\leq  \lambda_t\Psi_t^+\ll \bms (\Psi_t^+)\ll \bms(B_t).$$
This implies that $\lambda_t(\Id_{B_t})\asymp \bms(B_t)$. 
Finally, taking $\lambda$ to be the counting measure on $\N$ (resp. the Lebesgue measure)
gives the result for discrete (resp. continuous) time flow.
\end{proof}
\subsection{Logarithm law for the first hitting time}
Using similar arguments utilizing the effective mean ergodic theorem, we can prove the logarithm law for the first hitting time for the discrete flow.
Recall the discrete first hitting time function
\begin{equation}\label{e:hittingtime}
\tau^d_{B}(x)=\min\{k\in \N: xa_k\in B\}. 
\end{equation}
\begin{thm}\label{t:loglaw}
If $\mathcal B$ is inner regular, 
then
$$\lim_{t\to \infty} \frac{\log(\tau^d_{B_t}(x))}{-\log(\bms(B_t))}=1 \quad \mbox{ for $\bms$-a.e. $x\in X$}.$$
\end{thm}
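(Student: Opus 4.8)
The plan is to prove the two inequalities $\limsup \le 1$ and $\liminf \ge 1$ separately, where the $\limsup$ bound is the substantive one and uses the effective mean ergodic theorem, while the $\liminf$ bound is a soft consequence of the shrinking-target structure.

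First I would establish the \emph{upper bound} $\limsup_{t\to\infty}\frac{\log\tau^d_{B_t}(x)}{-\log(\bms(B_t))}\le 1$. Fix $\vep>0$ and set $t_j$ to be a suitable geometrically-spaced sequence. Using inner regularity, pick smooth $0\le\Psi_t\le\Id_{B_t}$ with $\log(\cS^*(\Psi_t))\ll|\log\bms(B_t)|$ and $\bms(\Psi_t)\gg\bms(B_t)$. Apply Proposition~\ref{p:mCf} with the averaging time $s=s_j:=\bms(B_{t_j})^{-1-\vep}$ (the counting measure $\lambda$ on $\N$) to bound $\bms(\cC^o_{s_j,\Psi_{t_j}})\le\bms(\cC_{s_j,\Psi_{t_j}})\ll\frac{|\log\bms(B_{t_j})|}{s_j\bms(B_{t_j})}=|\log\bms(B_{t_j})|\,\bms(B_{t_j})^{\vep}$. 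Choosing $t_j$ so that $\bms(B_{t_j})$ decays at least geometrically (always possible since $\bms(B_t)\to 0$ monotonically — interpolate so that $\bms(B_{t_{j}})\le 2^{-j}$), these measures are summable, so by Borel--Cantelli for $\bms$-a.e.\ $x$ there is $j_x$ with $x\notin\cC^o_{s_j,\Psi_{t_j}}$ for all $j\ge j_x$; that is, $\lambda_{s_j}(\Psi_{t_j})(x)>0$, hence $xa_k\in B_{t_j}$ for some $k\le s_j$, i.e.\ $\tau^d_{B_{t_j}}(x)\le s_j=\bms(B_{t_j})^{-1-\vep}$. For general $t$, pick $j$ with $t_j\le t< t_{j+1}$; since $B_{t}\supseteq B_{t_{j+1}}$ we get $\tau^d_{B_t}(x)\le\tau^d_{B_{t_{j+1}}}(x)\le\bms(B_{t_{j+1}})^{-1-\vep}$, while $-\log\bms(B_t)\ge-\log\bms(B_{t_j})$, and using $\bms(B_{t_{j+1}})\asymp\bms(B_{t_j})$ along our geometric sequence (or more carefully, $\log\bms(B_{t_{j+1}})/\log\bms(B_{t_j})\to 1$) we conclude $\frac{\log\tau^d_{B_t}(x)}{-\log\bms(B_t)}\le 1+\vep+o(1)$. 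Let $\vep\to 0$.

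Next the \emph{lower bound} $\liminf\ge 1$. This is a deterministic fact requiring no mixing: for any point $x$ and any $k<\tau^d_{B_t}(x)$ we have $xa_i\notin B_t$ for $i\le k$. The key observation is that hitting $B_t$ early forces hitting a \emph{fixed} target infinitely often: if $\tau^d_{B_t}(x)\le T(t)$ for a sequence $t\to\infty$ with $T(t)$ growing sub-exponentially relative to $-\log\bms(B_t)$, one derives a contradiction with the finiteness of $\m$ via a counting/Borel--Cantelli argument in the reverse direction — more precisely, one shows $\sum_j\bms(\{x:\tau^d_{B_{t_j}}(x)\le\bms(B_{t_j})^{-1+\vep}\})<\infty$ because $\{x:\tau^d_{B_{t_j}}(x)\le R\}\subseteq\bigcup_{k\le R}a_{-k}B_{t_j}$ has measure $\le R\,\bms(B_{t_j})=\bms(B_{t_j})^{\vep}$ when $R=\bms(B_{t_j})^{-1+\vep}$, which is summable along a geometric subsequence; Borel--Cantelli then gives $\tau^d_{B_{t_j}}(x)>\bms(B_{t_j})^{-1+\vep}$ eventually for a.e.\ $x$, and a monotonicity interpolation as above upgrades this to all $t$. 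Thus $\liminf_{t\to\infty}\frac{\log\tau^d_{B_t}(x)}{-\log\bms(B_t)}\ge 1-\vep$, and $\vep\to 0$ finishes it. Combining the two bounds yields the theorem.

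The main obstacle I anticipate is the \emph{interpolation between the geometric subsequence $t_j$ and general $t$}: the statement is an honest limit (not $\limsup$ or $\liminf$), so one must ensure that $-\log\bms(B_{t_j})$ and $-\log\bms(B_{t_{j+1}})$ are comparable up to a factor tending to $1$, which needs a careful choice of the $t_j$ (e.g.\ defining $t_j:=\sup\{t:\bms(B_t)\ge 2^{-j}\}$ so that $\bms(B_{t_j})\in[2^{-j-1},2^{-j}]$ roughly, using only monotonicity of $\bms(B_t)$ and $\bms(B_t)\to0$, with no regularity of the \emph{rate} of decay assumed). Handling the edge case where $\bms(B_t)$ has jumps, and checking that both Borel--Cantelli arguments survive this bookkeeping, is the delicate part; the rest is a routine application of Proposition~\ref{p:mCf} and Lemma~\ref{l:Cfk}.
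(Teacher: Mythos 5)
Your overall strategy — effective mean ergodic theorem via Proposition~\ref{p:mCf}, Borel--Cantelli along a dyadic subsequence, and a soft deterministic argument for the lower bound — matches the paper's. The lower bound is indeed essentially the argument you sketch (the paper cites \cite[Lemma 2.2]{KelmerYu17} for it), and the upper bound uses the same ingredients. But there is a genuine gap in the interpolation step, which you correctly flag as ``the delicate part'' but do not actually resolve, and where your proposed fix does not work.

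The problem is that $t\mapsto\bms(B_t)$ is only assumed monotone, so it can have jumps in its range. If you define $t_j:=\sup\{t:\bms(B_t)\ge 2^{-j}\}$, then after a jump — say $\bms(B_t)$ drops from $\mu_0$ to $\mu_1\ll\mu_0^2$ at some $T_0$ — you get $t_j=T_0$ and $\bms(B_{t_j})=\mu_1$ for a whole range of $j$, so the claim ``$\bms(B_{t_j})\in[2^{-j-1},2^{-j}]$ roughly'' is simply false, and the ratio $\log\bms(B_{t_{j+1}})/\log\bms(B_{t_j})$ is bounded away from $1$. Concretely, for $t$ just below $T_0$ you would bound $\tau^d_{B_t}(x)\le\tau^d_{B_{T_0}}(x)\le\mu_1^{-(1+\vep)}$, while $-\log\bms(B_t)\approx-\log\mu_0$, and the quotient $\frac{\log\tau^d_{B_t}(x)}{-\log\bms(B_t)}\lesssim(1+\vep)\frac{-\log\mu_1}{-\log\mu_0}$ can be much larger than $1$. (Such families \emph{are} allowed by ``inner regular'': e.g.\ $B_t=\fh_t$ for $t\le T_0$ and $B_t=\fh_{2t}$ for $t>T_0$.) The same defect afflicts your lower-bound interpolation. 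The paper's proof sidesteps this by not interpolating the hitting-time inequality at all: it fixes $\vep$, introduces the exceptional set $\cA_\vep^+=\{x:\limsup_t\frac{\log\tau^d_{B_t}(x)}{-\log\bms(B_t)}\ge 1+2\vep\}$, observes that $x\in\cA_\vep^+$ forces $x\in\cC^o_{k_\vep(t),\Id_{B_t}}$ for arbitrarily large $t$, and then covers the union over an \emph{entire} interval $I_j=\{t:\bms(B_t)\in[y_j,y_{j-1})\}$ by a single set $\cC^o_{2^{(1+\vep)j},\Id_{B_{t_j}}}$, using the monotonicities of $\cC^o_{T,\Psi}$ in both $T$ and $\Psi$ (and $\Id_{B_t}\ge\Id_{B_{t_j}}$ for $t\le t_j$). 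Crucially, the paper chooses the dyadic levels $y_j\in(2^{-j},2^{-j+1}]$ so that \emph{either} $\bms(B_{t_j})=y_j$ exactly \emph{or} $I_j=\emptyset$, and simply omits the empty intervals from the Borel--Cantelli sum; this is what makes the estimate $\bms(\cC^o_{2^{(1+\vep)j},\Psi_{t_j}^-})\ll j2^{-\vep j}$ valid even across jumps. Your BC estimate is fine along any subsequence with $\bms(B_{t_j})\le 2^{-j}$, but that is not what you need: you need to control $\tau^d_{B_t}$ for \emph{all} $t$ simultaneously, and the union over an uncountable set of $t$'s has to be compressed into a countable one using the $\cC^o$-monotonicity rather than pointwise interpolation of hitting times.
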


\begin{proof}
We first note that the bound 
$$\liminf_{t\to\infty}\frac{\log(\tau^d_{B_t}(x))}{-\log(\bms(B_t))}\geq 1,$$
holds for $\bms$-a.e.$x$; indeed, this holds in general for any monotone sequence of shrinking targets in a measure preserving dynamical system (see \cite[Lemma 2.2]{KelmerYu17}). It is thus sufficient to show that for $\bms$-a.e. $x$, 
$$\limsup_{t\to\infty}\frac{\log(\tau^d_{B_t}(x))}{-\log(\bms(B_t))}\leq 1.$$

Fix a small $\epsilon>0$ and set
$$\cA_\e^+:=\{x\in X: \limsup_{t\to\infty}\frac{\log(\tau^d_{B_t}(x))}{-\log\bms(B_t)}> 1+2\e \}.$$
Note that if $x\in \cA_\e^+$, then there are arbitrarily large values of $t$ for which  $\tau^d_{B_t}(x)\geq \frac{1}{\bms(B_t)^{1+2\e}}$,
and hence $x\in \cC^o_{k_\e(t),\Psi_t}$ where 
 $\Psi_t=\Id_{B_t}$ and 
$$k_\e(t)=\lfloor\frac{1}{(\bms(B_t))^{1+ 2\e}}\rfloor.$$

Now for any $j\in \N$, we choose $y_j\in (\frac{1}{2^{j+1}},\frac{1}{2^{j}}]$ such that either 
$t_j=\sup\{t:\bms(B_t)\geq y_j\}$ satisfies $\bms(B_{t_j})=y_j$ or there is no $t$ with $\bms(B_t)\in [y_{j},y_{j-1})$ (if the function $t\mapsto \m(B_t)$ is continuous, we may simply take $y_j=2^{-j}$. In general, since the function $t\mapsto \bms(B_t)$ is monotone decreasing, it has at most countably many points of discontinuity and hence we can always find such points). We partition $[0,\infty)$ into intervals $I_j=\{t: \bms(B_t)\in [y_{j+1},y_{j})\}$ and write
$$\cA_\e^+\subseteq \bigcap_{k\in \N}\bigcup_{j>k}\bigcup_{t\in I_j}\cC^o_{k_\e(t),\Psi_t}.$$
For all sufficiently large $j$ and  any $t$ with $\bms(B_t)\in [y_{j+1},y_{j})$, we have that $k_\e(t)\in [2^{(1+\e)( j)},2^{(1+2\e)(j+2)}]$ so that 
$\cC^o_{k_\e(t),\Psi_t}\subseteq \cC^o_{2^{(1+\e) j},\Psi_t}$. Since $B_{t_j}\subseteq B_t$ for all $t<t_j$, we get
$ \cC^o_{2^{(1+\e) j},\Psi_{t}}\subseteq  \cC^o_{2^{(1+\e) j},\Psi_{t_j}}$. We can thus further bound
$$\cA_\e^+\subseteq \bigcap_{k\in \N}{\bigcup_{j>k, I_j\ne \emptyset}}\cC^o_{2^{(1+\e) j},\Psi_{t_j}}.$$

From our choice of $y_j$ and $t_j$, we have that $\bms(\Psi_{t_j})=y_j\in  (\frac{1}{2^{j+1}},\frac{1}{2^{j}}]$.
Since $\{B_t\}$ is inner regular, we have $0\leq \Psi_{t_j}^{-}\leq \Psi_{t_j}$ with $\bms(\Psi_{t_j}^-)\asymp \bms(\Psi_{t_j})$ and $\log(\cS^*(\Psi_{t_j}^-))\ll |\log(\bms(\Psi_{t_j}))|\ll j$. Using Proposition \ref{p:mCf} for the smooth functions as before, we bound 
$$\bms(\cC_{2^{j(1+\e)},\Psi_{t_j}})\leq \bms (\cC_{2^{j(1+\e)},\Psi^-_{t_j}})\ll \frac{j}{2^{j(1+\e)}2^{-j}}\ll j2^{-\e j}.$$
Hence
$\bms(\cA_\e^+)\leq \sum_{j>k}j2^{-\e j}\ll k2^{-\e k}$ for all $k\in \N$. Therefore $\bms(\cA_\e^+)=0$ and 
$$\limsup_{t\to\infty}\frac{\log(\tau^d_{B_t}(x))}{-\log\bms(B_t)}\leq 1+2\e \quad\text{for $\bms$-a.e. $x\in X$.}$$  This holds for any $\e>0$.
 Hence, by taking a sequence of $\e_j\to 0$, we finish the proof.  
\end{proof}

\subsection{Thickening along the flow.} \label{s:cont}
We note that if  $\{k\in \N:xa_k\in B_k\}$ is unbounded (resp. $\{j\leq k: xa_j\in B_k\}\ne \emptyset$), then 
 $\{t\in\R: xa_t\in B_t\}$ is unbounded (resp. $\{t\leq k: xa_t\in B_k\}\ne \emptyset$). Hence the same assumptions on the shrinking rate of $\bms(B_t)$ as in Proposition \ref{p:HSS} give the same conclusions also for the continuous flow. However, it is possible for the set $\{t\in (0, \infty): xa_t\in B_t\}$ to be unbounded even when it is bounded for the discrete time flow. In order to get the correct thresholds for the  continuous flow, one needs to consider the thickened targets.

For any set $B\subseteq X$ we define its thickening $\tilde{B}$ to be
\begin{equation}\label{e:think}
\tilde{B}=\bigcup_{|s|<1/2} Ba_s.
\end{equation}
In the following lemma we observe that the shrinking target problems for the continuous flow can be translated to similar problems for the discrete flow hitting the thickened targets. 
\begin{lem}
For any $B\subseteq X$ and $x\in X$, we have:
\begin{enumerate}
\item If $xa_t\in B$ for some $t\in \R$, then $xa_k\in \tilde{B}$ for $k\in \Z$ with $|t-k|\leq 1/2$.
\item If $xa_k\in \tilde{B}$ with $k\in \Z$, then $xa_t\in B$ for some $t$ with $|t-k|\leq 1/2$.
\item $|\tau_B(x)-\tau^d_{\tilde{B}}(x)|\leq 1/2 .$
\end{enumerate}
\end{lem}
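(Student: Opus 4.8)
The plan is to read off all three assertions directly from the definition of the thickening, $\tilde B=BA_{1/2}=\bigcup_{|s|\le 1/2}Ba_{s}$, together with the trivial identity $xa_{k}=xa_{t}a_{k-t}=(xa_{t})a_{k-t}$ coming from the group law $a_{k}=a_{t}a_{k-t}$. Parts (1) and (2) are mirror images of one another: each records that a visit of the continuous orbit to $B$ and a visit of the discrete orbit to $\tilde B$ differ by a time shift of absolute value at most $1/2$, and part (3) then follows by combining them.

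For part (1), suppose $xa_{t}\in B$ and let $k\in\Z$ be a nearest integer to $t$, so that $s:=t-k$ satisfies $|s|\le 1/2$; then $xa_{k}=(xa_{t})a_{-s}\in Ba_{-s}\subseteq BA_{1/2}=\tilde B$, with $|t-k|=|s|\le 1/2$ as required. For part (2), if $xa_{k}\in\tilde B$ then by definition $xa_{k}\in Ba_{s}$ for some $|s|\le 1/2$, hence $xa_{k-s}=(xa_{k})a_{-s}\in B$; taking $t:=k-s$ gives $xa_{t}\in B$ with $|t-k|=|s|\le 1/2$.

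For part (3), write $\tau:=\tau_{B}(x)$ and $\tau^{d}:=\tau^{d}_{\tilde B}(x)$. Applying part (2) at $k=\tau^{d}$, where $xa_{\tau^{d}}\in\tilde B$ by definition, yields $t$ with $xa_{t}\in B$ and $|t-\tau^{d}|\le 1/2$; since $\tau^{d}\ge 1$ this $t$ is positive, so $\tau\le t\le\tau^{d}+1/2$. Conversely, for any $\epsilon>0$ choose $t$ with $\tau\le t<\tau+\epsilon$ and $xa_{t}\in B$ (possible by the definition of $\tau$ as an infimum, even when it is not attained); part (1) then produces $k\in\Z$ with $|t-k|\le 1/2$ and $xa_{k}\in\tilde B$, so that $\tau^{d}\le k\le t+1/2<\tau+\epsilon+1/2$. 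Letting $\epsilon\to0$ gives $\tau^{d}\le\tau+1/2$, and together with the previous inequality this yields $|\tau-\tau^{d}|\le 1/2$.

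The lemma is a bookkeeping statement and presents no genuine obstacle; the only points that deserve a word of care are minor. First, there is a harmless discrepancy between the open ($|s|<1/2$) and closed ($|s|\le 1/2$) forms of the thickening; it never matters since in part (1) one is free to pick a nearest integer, and the sole borderline case $t\in\tfrac12+\Z$ can be avoided or absorbed by an arbitrarily small perturbation. Second, the infimum defining $\tau_{B}(x)$ need not be attained when $B$ is not closed, which is exactly why part (3) uses the $\epsilon$-approximation above. Third, when $t<1/2$ the associated nearest integer $k$ may fail to be positive; this does not arise for the shrinking targets to which the lemma is applied, where the relevant hitting times tend to infinity, so it can be ignored.
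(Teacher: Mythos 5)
Your proof is correct and is exactly the direct verification the paper has in mind; the paper explicitly omits the argument, writing only that ``the proof of these observations is obvious once stated.'' You are also right to flag the two harmless edge cases (the open versus closed $|s|<1/2$ in the definition of $\tilde B$, and the possibility that the nearest integer $k$ is $0$ when $\tau_B(x)<1/2$): neither affects the asymptotic regime $t\to\infty$ in which the lemma is used, so the paper's silent treatment of them is consistent with your reading.
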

 The proof of these observations is easy once stated and we omit the details. Using this, we get the following sharper results for the continuous time flow, which imply Theorems \ref{t6} and \ref{t7}.
 \begin{thm}\label{t:CST}\label{inl}
 Suppose that the family $\{\tilde{B}_t\}_{t\geq 1}$ of  thickened targets  is inner regular.
 \begin{enumerate}
 \item If  $\liminf_{k\to \infty} \frac{|\log(\bms(\tilde{B}_k)|}{\bms(\tilde{B}_k) k}= 0$
  then for $\bms$-a.e. $x\in X$, 
 $\{t\in \R: xa_t\in B_t\}$ is unbounded.
 \item If 
$\sum_{j=1}^\infty \frac{|\log(\bms(\tilde{B}_{2^j}))|}{2^j\bms(\tilde{B}_{2^j})}<\infty$,
then for $\bms$-a.e. $x\in X$, 
 $$\{0<s<t: xa_s\in  B_t\} \neq \emptyset\quad\text{ for all $t\gg_x 1$} .$$ 
\item For $\bms$-a.e. $x\in X$,
$$\lim_{t\to\infty} \frac{\log\tau_{B_t}(x)}{-\log\bms(\tilde{B}_t)}=1.$$
 \end{enumerate}
 \end{thm}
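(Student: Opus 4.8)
The plan is to deduce Theorem~\ref{t:CST} by transferring each continuous-time statement to the corresponding discrete-time result for the thickened targets $\tilde B_t$, using the elementary comparison lemma stated just before it. Since $\{\tilde B_t\}$ is assumed inner regular, all the discrete-time machinery of \S\ref{s:shrinking} applies directly to this family.

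For part~(1), I would argue as follows. The hypothesis $\liminf_{k\to\infty}\frac{|\log\bms(\tilde B_k)|}{k\,\bms(\tilde B_k)}=0$ is exactly the hypothesis of Proposition~\ref{p:HSS} (equivalently of the Corollary following it) applied to the inner regular family $\{\tilde B_t\}$. Hence for $\bms$-a.e.\ $x$ the set $\{k\in\N: xa_k\in\tilde B_k\}$ is unbounded. By part~(1) of the comparison lemma, whenever $xa_t\in B_t$ for some real $t$ there is an integer $k$ with $|t-k|\le 1/2$ and $xa_k\in\tilde B_k$ --- but we want the converse direction: if $xa_k\in\tilde B_k$ for infinitely many integers $k$, then by part~(2) of the comparison lemma (applied with $B=B_k$, $\tilde B=\tilde B_k$) there is a real $t$ with $|t-k|\le 1/2$ and $xa_t\in B_k\subseteq B_{\lfloor t\rfloor}$ (using monotonicity of the family and $t\ge k-1/2$, so $B_k\subseteq B_{t}$ when $t\le k$, and a one-step shift otherwise). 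Care is needed here: to land in $B_t$ rather than $B_k$ one should observe that if $t\le k$ then $B_k\subseteq B_t$ directly, and if $t>k$ one instead runs the same argument along the sequence $k_j+1$ or absorbs the discrepancy into the choice of the witness time; in any case the set $\{t\in\R: xa_t\in B_t\}$ is unbounded. For part~(2), the summability hypothesis $\sum_j \frac{|\log\bms(\tilde B_{2^j})|}{2^j\bms(\tilde B_{2^j})}<\infty$ is precisely the hypothesis of Theorem~\ref{t:thresholdD} applied to $\{\tilde B_t\}$ along the sequence $t_j=2^j$, which yields for $\bms$-a.e.\ $x$ and all $t\gg_x 1$ that $\{k\le t: xa_k\in\tilde B_t\}\ne\emptyset$; then part~(2) of the comparison lemma produces a real $s$ with $|s-k|\le 1/2$, hence $0<s<t$ (adjusting $t$ by a bounded amount if necessary) with $xa_s\in B_t$.

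For part~(3), the logarithm law, I would invoke Theorem~\ref{t:loglaw} applied to the inner regular family $\{\tilde B_t\}$: for $\bms$-a.e.\ $x$,
$$\lim_{t\to\infty}\frac{\log \tau^d_{\tilde B_t}(x)}{-\log\bms(\tilde B_t)}=1.$$
By part~(3) of the comparison lemma, $|\tau_{B_t}(x)-\tau^d_{\tilde B_t}(x)|\le 1/2$, so $\log\tau_{B_t}(x)=\log\tau^d_{\tilde B_t}(x)+O\!\left(\tau^d_{\tilde B_t}(x)^{-1}\right)$; since $\tau^d_{\tilde B_t}(x)\to\infty$ as $t\to\infty$ (because $\bms(\tilde B_t)\to 0$ forces the hitting time to grow, by the general lower bound $\liminf_t \frac{\log\tau^d_{\tilde B_t}(x)}{-\log\bms(\tilde B_t)}\ge 1$ recalled in the proof of Theorem~\ref{t:loglaw}), the bounded difference is negligible after dividing by $-\log\bms(\tilde B_t)\to\infty$, and the limit transfers verbatim.

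The main obstacle is the bookkeeping in parts~(1) and~(2) around the off-by-$1/2$ shifts: one must be careful that the witness real time $s$ produced by the comparison lemma still satisfies $xa_s\in B_t$ with the \emph{same} index $t$ (or an index differing by a bounded amount, which is harmless for the asymptotic conclusions), using monotonicity $B_{t'}\subseteq B_t$ for $t'\ge t$ of the shrinking family. This is routine but needs to be stated cleanly; once the translation is set up correctly, every quantitative input is already available from \S\ref{s:shrinking}, so no new analytic work is required.
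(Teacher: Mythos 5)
Your proposal is correct and follows essentially the same route as the paper: apply the discrete-time results (the Corollary to Proposition~\ref{p:HSS}, Theorem~\ref{t:thresholdD}, and Theorem~\ref{t:loglaw}) to the inner regular family $\{\tilde B_t\}$, then transfer to the continuous flow via the comparison lemma, handling the off-by-$1/2$ discrepancy by a one-step shift of the index (which is exactly how the paper resolves it in part~(1), applying the corollary to $\tilde B_{k+1}$ so that the witness time $t\in[k-1/2,k+1/2]$ lands in $B_{k+1}\subseteq B_t$).
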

 \begin{proof}
 The first condition (with $k$ replaced by $k+1$) implies that the set $\{k\in \N: xa_k\in \tilde{B}_{k+1}\}$ is unbounded. For each $k$ in this set, there is some $t_k\in [k-1/2,k+1/2]$ with $xa_{t_k}\in B_{k+1}\subseteq B_{t_k}$, proving the first part.
 
For the second part, the summability condition implies that for $\bms$-a.e. $x$, we have that $\{xa_j:j\leq k)\}\cap \tilde{B}_k\neq \emptyset$ for all sufficiently large $k>k_0$. Now for $t\geq k_0+1$ and $k:=\lfloor t\rfloor$, there is some $j\leq k$ with $xa_j\in \tilde{B}_k$; hence there is $s\leq t$ with $xa_s\in B_k\subseteq B_t$.

Finally for the last part, since $|\tau_B(x)-\tau^d_{\tilde{B}}(x)|\leq 1/2$, we get that 
$$\lim_{t\to\infty} \frac{\log\tau_{B_t}(x)}{-\log \bms(\tilde{B}_t)}=\lim_{t\to\infty} \frac{\log\tau^d_{\tilde{B}_t}(x)}{-\log \bms(\tilde{B}_t)}.$$
 \end{proof}
 
\begin{rem}
 The problem of estimating $\Leb \{ t\le k: xa_t\in B_k\}$, for the continuous time flow,
 does not easily reduce to the discrete time problem for the thickened targets. Here, knowing that $xa_k\in \tilde{B_k}$ only tells us that $xa_t\in B_k$ for some $t$ close to $k$ but not on the amount of time spent there. Hence, to get asymptotics we need the stronger condition that $\sum_{j=1}^\infty \frac{|\log(\bms(B_{2^j}))|}{2^j\bms(B_{2^j})}<\infty$ for the original sets and not the thickened sets. In particular, if $\bms(B_k)\asymp {k^{-a}}$ for some $a\geq1$ and $\bms(\tilde{B}_k)\asymp {k^{-b}}$ for some $b<1$, then by reducing to the thickened case, we know that for all sufficiently large $k$,  $\{t\leq k: xa_t\in B_k\}\ne \emptyset$, but we do not get an asymptotic estimate for the size of these sets.
  \end{rem}

 \section{Explicit examples}\label{s:example}
 In this section, we consider explicit examples of shrinking targets given by shrinking cusp neighborhoods, shrinking metric balls and shrinking tubular neighborhoods, 
 and show that they are regular and approximate their measure. 

 \subsection{Cusp neighborhoods}\label{s:cusp}
 Let $\fh_{1}, \cdots, \fh_k$ and $\fh_{i,t}$ be the cusp neighborhoods defined in \eqref{hii}.
 In order to apply our results for these sets we need to verify that the family $\{\fh_{i,t}\}_{t\geq 1}$ is regular and satisfies  
 $\m(\fh_{i,t})\asymp e^{-t(2\delta-\kappa_i)}$ where $\kappa_i$ is the rank of the parabolic fixed points associated to $\fh_{i}$. 
 While the upper bound $\m(\fh_{i,t})\ll e^{-t(2\delta-\kappa_i)}$ is proved in \cite{DOP} and \cite{Roblin05}, we could not find a reference where the lower bound is established; so we include a proof for the convenience of readers. 
 
 The important feature of a geometrically finite group is that all of its parabolic fixed points are bounded, i.e.,
 the stabilizer of $\xi$ in $\Gamma$ acts cocompactly on $\Lambda-\{\xi\}$ for each parabolic fixed point $\xi$. This is the main ingredient
 of the argument below. We refer to  \cite{Bo} for the description of horoballs in geometrically finite manifolds that will be used below.

We will work here with the upper half space model
 $$\bH^n=\{z=(x,y): x\in \R^{n-1},\;y>0\},$$
 and  fix our base point to be $o=(0,1)$. Since we will work with one fixed cusp, we may assume without loss of generality that it is the infiniy $\infty$. 
  Set $\Gamma_\infty:=\op{Stab}_\Gamma \infty$ and $\kappa$ to be the rank of $\infty$.
   Without loss of generality, we assume that $\Gamma_\infty=\z^\kappa$. Fix  a horoball $\tilde H(0)\subset \bH^n$ 
such that 
$$\G_\infty=\{\gamma\in \Gamma: \tilde H(0) \cap \gamma \tilde H(0) \ne \emptyset\}=\{\gamma\in \Gamma: \tilde H(0) = \gamma \tilde H(0) \} .$$
  In fact, $\tilde H(0)$ is of the form $\{(x,y): y=y_0\}$ for some $y_0>0$. For the notational simplicity, we assume $y_0=1$.
  Set $\tilde H(t)=\{z \in \tilde H(0): d(z, \partial \tilde H(0) )\ge t\}=\{(x,y): y\ge e^t\}$.
  Without loss of generality, we may assume $\pi(\fh_t)=\Gamma_\infty\ba \tilde H(t)$ where  $\fh_{\infty,t}=\fh_t$.

 Choose a fundamental domain $\cF_\infty\subseteq \R^{n-1}$ for the action of $\G_\infty$ on $\br^{n-1}$ containing the origin so that the sets,
$\op{int}(\gamma \cF_\infty)$, are mutually disjoint for $\gamma\in \Gamma_\infty$.  Note that $H'(t)=\{z=(x,y): x\in\cF_\infty: y\geq e^t\}$ is a fundamental domain for $\pi(\fh_t)=\G_\infty\bk \tilde H(t)$.
We  can choose a  compact fundamental parallelepiped  $ \mathcal P$ containing $\cF_\infty\cap \Lambda$ such that $\G_\infty \cP$ covers $\Lambda\setminus\{\infty\}$ and $\mathrm{int}(\g\cP)$s are mutually disjoint for all $\g\in \G_\infty$.  We may choose $\cP$ to contain the origin so that if $H(t):=H'(t)\cap \mathrm{hull}(\Lambda)$,
then \begin{equation}\label{inj} (\cF_\infty\cap \Lambda) \times [e^t, \infty) \subset H(t)\subset \mathcal P\times [e^t, \infty) .\end{equation}

 As $\cP$ is compact, we have for any $z\in H(t)$, we have $d(\G o, z)=d(o,z)$, and for $z\in \partial H(t)$,
$$d(\G o, z)=d(o,z)=t+O(e^{-t}).$$  
The following is also clear from \eqref{inj}:
 \begin{prop}\label{CuspInj}
The injectivity radius $r_z$ at any point $z\in \partial H(t)$ satisfies $r_z\asymp e^{-t}$, where the implied constants are uniform for all $t\gg 1$.
\end{prop}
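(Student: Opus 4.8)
The plan is to compute the translation length $\inf_{\gamma\in\G\setminus\{e\}}d(z,\gamma z)$ in $\bH^n$ and show it is $\asymp e^{-t}$. Since $\G$ is torsion-free this equals $2r_z$, and it is comparable, with constants depending only on $n$, to the injectivity radius of $\G\bk G$ at a lift of $z$ (the relevant infimum being attained at a pure translation in $\G_\infty$, for which hyperbolic distance and distance in $G$ agree up to a bounded factor). By hypothesis $z\in\partial H(t)$ means $z=(x,e^t)$, and by \eqref{inj} we have $x\in\cP$, the fixed compact parallelepiped; in particular $z\in\tilde H(t)\subseteq\tilde H(0)$.

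For the upper bound I would pair $z$ with a fixed generator $\gamma_0$ of $\G_\infty=\z^\kappa$, which acts as the Euclidean translation $w\mapsto w+v_0$ with $v_0\ne 0$. Then $\gamma_0 z=(x+v_0,e^t)$, and the distance formula in the upper half-space model gives
\[
\cosh d(z,\gamma_0 z)=1+\frac{|v_0|^2}{2e^{2t}},
\]
so $d(z,\gamma_0 z)\asymp |v_0|\,e^{-t}\asymp e^{-t}$ for $t\gg 1$, with no dependence on $x$; hence $r_z\ll e^{-t}$ uniformly.

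For the lower bound I would split into two cases. If $\gamma\in\G_\infty\setminus\{e\}$, it is the translation by a nonzero vector $v\in\z^\kappa$, so $|v|\ge c_0:=\min\{|w|:0\ne w\in\z^\kappa\}>0$, and the same formula gives $d(z,\gamma z)=\operatorname{arccosh}\!\big(1+\tfrac{|v|^2}{2e^{2t}}\big)\ge\operatorname{arccosh}\!\big(1+\tfrac{c_0^2}{2e^{2t}}\big)\gg e^{-t}$ for $t\gg 1$. If $\gamma\notin\G_\infty$, then $\gamma(\infty)$ is a finite point of $\partial\bH^n$, so $\gamma\tilde H(0)$ is a Euclidean horoball tangent to $\R^{n-1}$; since $\tilde H(0)$ was chosen so that $\gamma\tilde H(0)\cap\tilde H(0)\ne\emptyset$ only for $\gamma\in\G_\infty$, here $\gamma\tilde H(0)\cap\tilde H(0)=\emptyset$, which forces the top of $\gamma\tilde H(0)$ to lie at height $<1$, i.e. $\gamma\tilde H(0)\subseteq\{y\le 1\}$. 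As $z\in\tilde H(0)$ we get $y(\gamma z)\le 1$, and since any path from $z$ to $\gamma z$ has hyperbolic length at least $\int|dy|/y$,
\[
d(z,\gamma z)\ \ge\ |\log y(z)-\log y(\gamma z)|\ \ge\ t\ \gg\ e^{-t}.
\]
Taking the minimum of the two estimates over $\gamma\ne e$ gives $\inf_{\gamma\ne e}d(z,\gamma z)\gg e^{-t}$, uniformly in $z$, and with the upper bound this yields $r_z\asymp e^{-t}$.

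I expect the only genuine point to be the case $\gamma\notin\G_\infty$: there one uses that the horoball $\tilde H(0)$ is precisely invariant under $\G_\infty$ — the feature of geometrically finite cusps (bounded parabolic points) used throughout this section — which via the elementary picture of Euclidean horoballs immediately confines $\gamma z$ to the slab $\{y\le 1\}$ while $z$ sits at height $e^t$. Everything else is the standard distance formula in the upper half-space model, and all estimates are uniform in $z$ because for $\gamma\in\G_\infty$ the translation length $d(z,\gamma z)$ depends only on the height $e^t$ and not on the horizontal coordinate.
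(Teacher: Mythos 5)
Your proof is correct and is the standard argument that the paper is alluding to with its remark that the proposition "is also clear from \eqref{inj}" — the paper gives no further details, and the two cases you work out (pure translations in $\G_\infty$ give displacement $\asymp e^{-t}$, while any $\gamma\notin\G_\infty$ sends $\tilde H(0)$ to a Euclidean horoball below height $1$ by the precise-invariance assumption, forcing displacement $\ge t$) are exactly what one would supply. The only point you compress, correctly but without comment, is the passage between the injectivity radius in $\G\backslash G$ (measured with the left-$G$-, right-$K$-invariant metric $d_G$) and half the minimal hyperbolic displacement $\inf_{\gamma\neq e}d_{\bH^n}(z,\gamma z)$: the inequality $d_{\bH^n}(z,\gamma z)\le d_G(g,\gamma g)$ is automatic, and the reverse comparison $d_G(g,\gamma g)\ll e^{-t}$ for your witness $\gamma_0$ holds because $g^{-1}\gamma_0 g$ is, up to $K$-conjugation, a pure translation by a vector of Euclidean length $\asymp e^{-t}$, so its distance from $e$ in $G$ is comparable to its displacement of $o$. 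Making that one sentence explicit would close the only gap a careful reader might flag.
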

 
We will use the following well-known fact:
\begin{prop}\label{p:Ht}
There exists $c>0$ such that for all $t\geq 0$,
$$H(t+c)\subset \{z\in \mathrm{hull}(\Lambda)\cap H(0) :  d(z, \G o)\geq t\} \subseteq  H(t-c).$$
\end{prop}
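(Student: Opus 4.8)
The plan is to reduce the proposition to the elementary estimate that, throughout the region $H(0)$, the distance $d(z,\Gamma o)$ agrees with the logarithm of the Euclidean height of $z$ up to a fixed additive constant. The two facts I would start from are both already available: first, the identity $d(\Gamma o,z)=d(o,z)$ for $z\in H(0)$ recorded above (a consequence of the precise invariance of $\tilde H(0)$ together with the compactness of $\cP$); and second, the inclusion $H(0)\subseteq \cP\times[1,\infty)$ coming from \eqref{inj}, so that every $z=(x,y)\in H(0)$ has $y\ge 1$ and $|x|\le R:=\sup_{x'\in\cP}|x'|<\infty$.

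The key computation is then immediate from the upper half-space distance formula: for $o=(0,1)$ and $z=(x,y)$,
\[\cosh d(o,z)=1+\frac{|x|^2+(y-1)^2}{2y}=\frac{y}{2}+\frac{|x|^2+1}{2y},\]
so for $z\in H(0)$ one has $\tfrac{y}{2}\le \cosh d(o,z)\le \tfrac{y}{2}(R^2+2)$; combining this with $\tfrac12 e^{d}\le \cosh d\le e^{d}$ (valid for $d\ge 0$) gives $|d(o,z)-\log y|\le c_0$ with $c_0:=\log(R^2+2)$, uniformly over $z\in H(0)$. I would then take $c:=c_0$ and verify the two inclusions directly. If $z=(x,y)\in H(0)$ satisfies $d(z,\Gamma o)\ge t$, then $\log y\ge d(o,z)-c_0\ge t-c$, so $y\ge e^{t-c}$; since $z\in H'(0)\cap\mathrm{hull}(\Lambda)$ forces $x\in\cF_\infty$ and $z\in\mathrm{hull}(\Lambda)$, this gives $z\in H'(t-c)\cap\mathrm{hull}(\Lambda)=H(t-c)$. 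Conversely, if $z=(x,y)\in H(t+c)$ then $x\in\cF_\infty$, $z\in\mathrm{hull}(\Lambda)$ and $y\ge e^{t+c}\ge 1$, hence $z\in H(0)$ and $d(z,\Gamma o)=d(o,z)\ge \log y-c_0\ge t$, so $z$ lies in the middle set. This yields both inclusions with the single constant $c$.

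I do not expect a genuine obstacle. The only point needing a little care is confirming that the implied constant in $d(o,z)=\log y+O(1)$ is uniform over all of $H(0)$, which is exactly where compactness of $\cP$ (finiteness of $R$) is used; everything else is bookkeeping. In particular, passing between $H(t\pm c)$ and the middle set only alters the height coordinate $y$ and never the horizontal coordinate $x$, so membership in $\cF_\infty$ and in $\mathrm{hull}(\Lambda)$ is automatically preserved, and since $H'(s)$ (hence $H(s)$) is defined for every real $s$, negative values of $t-c$ cause no difficulty.
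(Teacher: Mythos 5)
Your proposal is correct and follows essentially the same route as the paper's (very terse) proof, which likewise rests on the facts recorded just before the proposition: $d(\Gamma o,z)=d(o,z)$ on $H(0)$ and $d(\Gamma o,z)=t+O(1)$ on $\partial H(t)$. The only difference is that you make the distance--height comparison $|d(o,z)-\log y|\le c_0$ fully explicit via the $\cosh$ formula and uniformity of $R=\sup_{x'\in\cP}|x'|$, whereas the paper simply asserts the equivalent statement for $z\in\partial H(t)$ and concludes; this is a stylistic rather than mathematical difference.
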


Next we want to estimate the measure $\m(\fh_t)$ for large $t$. For any $\xi^-\ne \xi^+\in \partial \bH^n-\{\infty\}$ and $s\in \R$, we denote by $\xi_s$ the unit speed geodesic  from $\xi^-$ to $\xi^+$ (where $s$ is the signed distance from the highest point of the geodesic), and recall that this gives us the coordinates $(\xi^-,\xi^+,s)$ parametrizing $\T^1(\cM)$. Let $\Lambda'=\Lambda\setminus \{\infty\}$ and let $\cP_0=\cF_\infty\cap \Lambda$. 

We first show the following:
\begin{lem}\label{l:CP}
$$\m(\fh_t)=\sum_{\g\in \G_\infty}\int_{\cP_0}\int_{\g\cP_0}\int_\R \Id_{\tilde{H}(t)}(\pi(\xi_s))d\m(\xi^-,\xi^+,s).$$
\end{lem}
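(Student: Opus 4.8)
The plan is to compute $\m(\fh_t)$ by an unfolding over the coset space $\G/\G_\infty$ and then substituting a fundamental domain for $\G_\infty$ written in the coordinates $(\xi^-,\xi^+,s)$.

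First I would record the structural fact that makes the unfolding work: since $\tilde H(t)\subseteq\tilde H(0)$ and $\g\tilde H(0)\cap\tilde H(0)\ne\emptyset$ forces $\g\in\G_\infty$, the horoballs $\g\tilde H(t)$, $\g\G_\infty\in\G/\G_\infty$, are pairwise disjoint; consequently $\fh_t$ is exactly the image in $\G\bk\T^1(\bH^n)$ of the $\G_\infty$-invariant set $\{u:\pi(u)\in\tilde H(t)\}$, and the function $u\mapsto\sum_{\g\G_\infty\in\G/\G_\infty}\Id_{\tilde H(t)}(\pi(\g^{-1}u))$ is well defined on cosets (each $\G_\infty$-translate stabilizes $\tilde H(t)$), is $\G$-invariant, has at most one nonzero term, and descends to $\Id_{\fh_t}$ on $\G\bk\T^1(\bH^n)$.

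Next I would unfold. Writing $\m(\fh_t)=\int_{\G\bk\T^1(\bH^n)}\Id_{\fh_t}\,d\m$, inserting the sum above, and using that the lift $\tilde\m$ of $\m$ to $\T^1(\bH^n)$ is $\G$-invariant, the standard unfolding over $\G/\G_\infty$ gives
$$\m(\fh_t)=\int_{\G_\infty\bk\T^1(\bH^n)}\Id_{\tilde H(t)}(\pi(u))\,d\tilde\m(u)=\int_{\mathcal D}\Id_{\tilde H(t)}(\pi(u))\,d\tilde\m(u)$$
for any Borel fundamental domain $\mathcal D$ of the $\G_\infty$-action on $\T^1(\bH^n)$. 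Since the integrand vanishes off $\op{supp}(\tilde\m)=\{u:u^\pm\in\Lambda\}$, it is enough to take $\mathcal D$ to be a fundamental domain for $\G_\infty$ on this set. Here I would use that $\G_\infty=\z^\kappa$ acts on $\Lambda'=\Lambda\setminus\{\infty\}$ by translations, with Borel fundamental domain $\cP_0$ (e.g.\ $\cF_\infty\cap\Lambda$), and that in the coordinates $(\xi^-,\xi^+,s)$ it acts by $\g\cdot(\xi^-,\xi^+,s)=(\g\xi^-,\g\xi^+,s+\beta_{\xi^-}(\g^{-1}o,o))$, i.e.\ diagonally on the endpoints with a cocycle shift in $s$; hence $\mathcal D=\{(\xi^-,\xi^+,s):\xi^-\in\cP_0,\ \xi^+\in\Lambda',\ s\in\R\}$ works. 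Substituting and then decomposing $\Lambda'=\bigsqcup_{\g\in\G_\infty}\g\cP_0$ in the $\xi^+$ variable turns $\int_{\xi^+\in\Lambda'}$ into $\sum_{\g\in\G_\infty}\int_{\xi^+\in\g\cP_0}$, which is the claimed identity.

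The one step that needs genuine care — the main obstacle, though it is more bookkeeping than difficulty — is verifying that $\mathcal D$ is an honest fundamental domain: that the $s$-shift in the $\G_\infty$-action does not disrupt the tiling (it does not, as $s$ ranges over all of $\R$), and that one may take $\cP_0$ to be a genuine fundamental domain so that $\Lambda'=\bigsqcup_{\g}\g\cP_0$ is an exact partition — with the concrete choice $\cP_0=\cF_\infty\cap\Lambda$ this amounts to knowing $\nu_o(\partial\cF_\infty\cap\Lambda)=0$ for a suitably chosen $\cF_\infty$, which I would obtain by arranging its finitely many hyperplane faces to carry no $\nu_o$-mass. Everything else is the definition of the BMS measure $\m$ in the coordinates $(\xi^-,\xi^+,s)$ together with routine unfolding.
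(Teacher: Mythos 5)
Your proof is correct, and while it reaches the same identity the route is arranged a bit differently from the paper's, in a way worth noting. The paper starts from the $\G$-fundamental domain $H'(t)=\cF_\G\cap\tilde H(t)$, writes $\m(\fh_t)=\int \Id_{H'(t)}\,d\tilde\m$ over $\T^1(\bH^n)$ in Hopf coordinates, tiles $\Lambda'=\bigsqcup_{\g\in\G_\infty}\g\cP_0$ in \emph{both} endpoint variables (a double sum over $\G_\infty\times\G_\infty$), and then normalizes $\xi^-\in\cP_0$ by a change of variables, which re-assembles $\sum_\g\Id_{\g^{-1}H'(t)}=\Id_{\tilde H(t)}$. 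You instead do the standard Eisenstein-series-style unfolding over $\G/\G_\infty$ up front (justified by the disjointness of the horoballs $\g\tilde H(t)$ across cosets), which reduces directly to integrating $\Id_{\tilde H(t)}$ over a $\G_\infty$-fundamental domain; you then exhibit that domain in Hopf coordinates as $\cP_0\times\Lambda'\times\R$ (using that $\G_\infty$ acts freely by translations on $\Lambda'$ with the cocycle shift in $s$) and tile only the $\xi^+$-variable. The two computations use the same ingredients — Hopf coordinates for $\m$, the partition $\Lambda'=\bigsqcup_\g\g\cP_0$, and $\G$-invariance of the lifted measure — but your version avoids the $\G$-fundamental domain $H'(t)$ and the double sum entirely, trading them for an explicit check that $\cP_0\times\Lambda'\times\R$ is a bona fide $\G_\infty$-fundamental domain (which you correctly flag as the only point needing care, including the measure-zero-boundary issue for $\cP_0=\cF_\infty\cap\Lambda$). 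Both are valid; yours is arguably cleaner bookkeeping, the paper's avoids having to describe the $\G_\infty$-action and fundamental domain in the $(\xi^-,\xi^+,s)$ coordinates explicitly.
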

\begin{proof}
Let $\cF_\G\subseteq \bH^n$ be a fundamental domain for $\G\bk \bH^n$ containing $o$ such that for $t\geq  0$ sufficiently large, we have that $\cF_\G\cap \tilde{H}(t)=H'(t)$,
so that $\m(\fh_t)=\int_{\T^1(\cM)}\Id_{H'(t)}d\m$. 
Since $\{(\xi^-,\xi^+,s): \{\xi^\pm\}\cap \{\infty\}\ne \emptyset \}$ has $\m$-measure zero, we can rewrite this in the $(\xi^-,\xi^+,s)$ coordinates as 
$$\int_{\T^1(\cM)}\Id_{H'(t)}d\m=\int_{\Lambda'}\int_{\Lambda'}\int_\R \Id_{H'(t)}(\pi(\xi_s))d\m(\xi^-,\xi^+,s).$$
Now decomposing $\Lambda'$ as a union over translates $\g\cP_0$ with $\g\in \G_\infty$, we can rewrite
\begin{eqnarray*}
\m(\fh_t)&=&\sum_{\gamma,\gamma'\in \G_\infty} \int_{\g \cP_0}\int_{\g'\cP_0}\int_\R\Id_{H'(t)}(\pi(\xi_s))d\m(\xi^-,\xi^+,s)\\
&=&\sum_{\gamma,\gamma'\in \G_\infty} \int_{\cP_0}\int_{\g'\cP_0}\int_\R\Id_{\g^{-1}H'(t)}(\pi(\xi_s))d\m(\xi^-,\xi^+,s)\\
&=&\sum_{\gamma\in \G_\infty} \int_{\cP_0}\int_{\g\cP_0}\int_\R\Id_{\tilde{H}(t)}(\pi(\xi_s))d\m(\xi^-,\xi^+,s)\\
\end{eqnarray*}
where for the second line we made a change of variables $\xi\mapsto \g\xi$ and in the  last line we used that $\tilde{H}(t)=\bigcup_{\g\in \G_\infty} \g H'(t)$.
\end{proof}

In order to evaluate this, we need the following geometric estimate.
\begin{lem}\label{l:TIC}
Let $\xi^-\in \cP_0$ and $\xi^+\in \g \cP_0$ with $\g\in \G_\infty$. Then there exists $c>0$ such that 
\begin{equation}\int_\R\Id_{\tilde{H}(t)}(\pi(\xi_s))ds=\begin{cases} d(o,\g o)-2t+O(1)&\text{
if $d(o,\g o)>2t-c$}\\ 0&\text{ otherwise}.\end{cases}\end{equation}
\end{lem}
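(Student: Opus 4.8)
The statement is a purely hyperbolic-geometry computation about how long a geodesic with endpoints $\xi^-\in\cP_0$ and $\xi^+\in\g\cP_0$ spends in the horoball $\tilde H(t)=\{(x,y):y\ge e^t\}$. Since $\cP_0$ is a fixed compact set and $\g\in\G_\infty$ acts by translation in the $x$-direction, the key point is that the geodesic $\xi_s$ in the upper half-space model is a Euclidean semicircle of diameter $|\xi^+-\xi^-|$, and its highest point has height comparable to that diameter. I would first reduce to the model case where $\xi^-,\xi^+\in\R^{n-1}$ are genuine points (not $\infty$), which is automatic since $\cP_0,\g\cP_0$ are bounded, and record that $|\xi^+-\xi^-|\asymp\|\g\|$ in the sense that $\log|\xi^+-\xi^-| = \tfrac12 d(o,\g o)+O(1)$; this last comparison uses that the orbit point $\g o$ for a translation $\g$ by a vector $v$ lies at $(v,1)$, so $d(o,\g o)=d((0,1),(v,1))$, and a standard estimate gives $d((0,1),(v,1))=2\log|v|+O(1)$ for $|v|$ large, while $|\xi^+-\xi^-|$ differs from $|v|$ by a bounded multiplicative factor because $\xi^-$ ranges over the bounded set $\cP_0$ and $\xi^+-v$ over the bounded set $\cP_0$.

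Next I would do the one-dimensional integral explicitly. Parametrize the geodesic from $\xi^-$ to $\xi^+$ so that $s$ is signed arclength from the apex, which sits at Euclidean height $r:=\tfrac12|\xi^+-\xi^-|$. Along a hyperbolic geodesic that is a Euclidean semicircle of radius $r$ centered on $\R^{n-1}$, the height at signed arclength $s$ from the top is $y(s)=r\,\sech(s)$ (this is the standard fact that $\cosh s = r/y$ on such a geodesic). Hence $\pi(\xi_s)\in\tilde H(t)$ iff $r\,\sech(s)\ge e^t$, i.e. iff $\cosh(s)\le r e^{-t}$, which has solutions exactly when $re^{-t}\ge1$, i.e. $2t\le \log|\xi^+-\xi^-|\cdot 2 = d(o,\g o)+O(1)$, i.e. $d(o,\g o)>2t-c$ for a suitable constant $c$; and in that case the set of such $s$ is the symmetric interval $|s|\le\operatorname{arccosh}(re^{-t})$, of total length $2\operatorname{arccosh}(re^{-t}) = 2\log(2re^{-t})+O(1) = 2\log r - 2t + O(1) = \log|\xi^+-\xi^-|^2 - 2t + O(1) = d(o,\g o) - 2t + O(1)$, using $\operatorname{arccosh}(x)=\log(2x)+O(1/x^2)$ for large $x$. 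That is exactly the claimed formula, with all error terms absorbed uniformly because the relevant constants only depend on the fixed bounded sets $\cP_0$ and on the horoball normalization $y_0=1$.

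There is one subtlety I would be careful about: the integrand in the lemma is $\Id_{\tilde H(t)}(\pi(\xi_s))$ where $\pi$ is the projection to $\cM$, but $\tilde H(t)$ as written is a subset of $\bH^n$ (the region $y\ge e^t$ in the fundamental picture), and in Lemma~\ref{l:CP} this indicator has already been pulled back so that it really does test membership in the genuine horoball $\tilde H(t)\subset\bH^n$ along the lifted geodesic $\xi_s$; so no further unfolding is needed here and the computation above is literally the right one. I expect the only real work is bookkeeping the additive $O(1)$ constants — checking that the discrepancy between $|\xi^+-\xi^-|$ and the translation length, the $\operatorname{arccosh}$ versus $\log$ comparison, and the "$d(o,\g o)$ versus $2\log|v|$" comparison are all uniform in $\g\in\G_\infty$ and in $\xi^\pm$ over the compact sets — which is straightforward since everything varies over fixed compacta. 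The geodesic-height identity $\cosh s = r/y$ is the one computational input; everything else is elementary. The main (mild) obstacle is simply organizing these uniform error bounds cleanly rather than any genuine difficulty.
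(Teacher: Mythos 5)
Your proof is correct, but it takes a genuinely different computational route than the paper's. You work directly with the explicit height function along a unit--speed geodesic: the geodesic from $\xi^-$ to $\xi^+$ is a semicircle of radius $r=\tfrac12\|\xi^+-\xi^-\|$, its height at signed arclength $s$ from the apex is $y(s)=r\,\sech(s)$, and the time in $\tilde H(t)$ is $2\operatorname{arccosh}(re^{-t})$, which you then expand via $\operatorname{arccosh}(x)=\log(2x)+O(1)$ (valid uniformly for $x\ge1$). The paper instead avoids the explicit $\sech$ formula: it marks the four crossing points $z_1,z_4\in\partial\tilde H(0)$ and $z_2,z_3\in\partial\tilde H(t)$, notes that $d(z_1,o)$, $d(z_4,\g o)$, $d(z_2,a_to)$, $d(z_3,\g a_to)$ are all uniformly bounded (because the horizontal coordinates of $z_1,z_4$ stay in fixed compacta while those of $z_2,z_3$ are $O(e^t)$), and then uses additivity of distances along a geodesic, $d(z_1,z_4)=d(z_1,z_2)+d(z_2,z_3)+d(z_3,z_4)$, together with $d(z_1,z_4)=d(o,\g o)+O(1)$ and $d(z_1,z_2)=d(z_3,z_4)=t+O(1)$. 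Both methods use the same input $\|\xi^+-\xi^-\|=\|v\|+O(1)$ and $d(o,\g o)=2\log\|v\|+O(1)$ (the paper's proof has a typo, writing $\log\|v\|$ in place of $2\log\|v\|$, corrected elsewhere in the same section). Your version is somewhat more explicit and self-contained; the paper's is slightly more robust in that it never leaves hyperbolic-geometric language. One small imprecision in your write-up: in the ``subtlety'' paragraph you refer to $\pi$ as ``the projection to $\cM$,'' but at this stage of the unfolding (from Lemma~\ref{l:CP}) $\pi$ is the basepoint projection $\T^1(\bH^n)\to\bH^n$ applied to the \emph{lifted} geodesic $\xi_s$; you do end up concluding the right thing, so this is only a wording slip, not a gap.
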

\begin{proof}
Recall that $o=(0,1)$ and note that $\g o=(v,1)$ for some $v\in \R^{n-1}$. Since $ \cP_0$ is a compact set containing the origin, then $\g \cP_0$ is a compact set (of the same diameter) containing $v$ and hence  $\|\xi^- - \xi^+\|=\|v\|+O(1)$ where $\|\cdot\|$ is the Euclidian norm on $\R^{n-1}$. Note that $\sup\{t: \xi\cap H(t)\neq \emptyset\}=\log(\tfrac{\|\xi^--\xi^+\|}{2})$ and $d(o,\g o)=\log(\|v\|)+O(1)$. Hence  if $d(o,\g o)<2t-c$, then $\xi\cap H(t)=\emptyset$.

Now assume that $\xi\cap H(t)\neq \emptyset$ and let $z_1,z_4\in \bH^n$ be the first and second intersections of the geodesic $\xi_s$ with $\partial \tilde H(0)$ and $z_2,z_3$ the first and second intersections with $\partial H(t)$.  Writing $z_i=(x_i,y_i)$, we have that $\|x_1\|$ and $\|x_4-v\|$ are uniformly bounded and that $\|x_2\|$ and $\|x_3-v\|$  are bounded by $O(e^t)$; this implies that 
$d(z_1,o)$, $d(z_4,\g o)$, $d(z_2, a_t o)$ and $d(z_3,\g a_t o)$ are all uniformly bounded. Now on one hand, $d(z_1,z_4)=d(o,\g o)+O(1)$, and on the other hand, since $z_1,z_2,z_3,z_4$ all lie on the same geodesic, we have
$d(z_1,z_4)=d(z_1,z_2)+d(z_2,z_3)+d(z_3,z_4)$. The middle term is precisely $\int_\R\Id_{\tilde{H}(t)}(\pi(\xi_s))ds$ and
$d(z_1,z_2)=d(o, a_t o)+O(1)=t+O(1)$  and similarly $d(z_3,z_4)=t+O(1)$, concluding the proof. \end{proof}

Recall the notation $\tilde\fh_t=\cup_{|s|<1/2}\mathcal G^s \fh_t$.
\begin{prop}\label{p:BMScusp}
We have $\m(\fh_t)\asymp \m(\tilde \fh_t)\asymp  e^{-t(2\delta-\kappa)}$.
\end{prop}
\begin{proof}
From Lemma \ref{l:CP}, we have 
\begin{eqnarray*}
&&\m(\fh_t)=\sum_{\g\in \G_\infty}\int_{\cF_\infty}\int_{\g\cF_\infty}\int_\R \Id_{\tilde{H}(t)}(\pi(\xi_s))d\m(\xi^-,\xi^+,s)=\\
&&\sum_{\g\in \G_\infty}\int_{\cP_0}\int_{\g\cP_0}\int_\R \Id_{\tilde{H}(t)}(\pi(\xi_s))e^{\delta(\beta_{\xi^+}(o,\pi(\xi_s))+\beta_{\xi^-}(o,\pi(\xi_s)))}d\nu_o(\xi^-)d
\nu_o(\xi^+)ds
\end{eqnarray*}
Next note that for any $\xi^-\in \cP_0$ and $\xi^+\in \g \cP_0$ and $\g\in \G_\infty$, the sum
$\beta_{\xi^+}(o,\pi(\xi_s))+\beta_{\xi^-}(o,\pi(\xi_s))$ is independent of $s$ and is uniformly bounded.
Indeed, let $s_1$ be the least time such that $z_1:=\pi(\xi_{s_i})\in H(0)$ and note that $d(z_1,o)=O(1)$  is uniformly bounded.  Now, for $z=\pi(\xi_s)$, on one hand 
$\beta_{\xi^+}(z_1,z)+\beta_{\xi^-}(z_1,z)=s-s_1+s_1-s=0$,
and on the other hand 
$|\beta_{\xi^\pm}(z_1,z)-\beta_{\xi^\pm}(o,z)|\leq d(z_1,o)$  which is uniformly bounded.

With this observation together with Lemma \ref{l:TIC}, we get that 
\begin{eqnarray*}
\m(\fh_t)&\asymp &\sum_{\g\in \G_\infty}\nu_o(\cP_0)\nu_o(\g \cP_0)\int_\R \Id_{\tilde{H}(t)}(\pi(\xi_s))ds\\
&\asymp& \mathop{\sum_{\g\in \G_\infty}}_{d(o,\g o)\geq 2t-c} \nu_o(\cP_0)\nu_o(\g \cP_0)(d(o,\g o)-2t+O(1))
\end{eqnarray*}
Next, to estimate $\nu_o(\g \cP_0)=\nu_{\g o}(\cP_0)$, we use the $\Gamma$-conformality to get that
$$\nu_o(\g \cP_0)=\int_{\cP_0}e^{-\delta \beta_\xi(\g o, o)}d\nu_o(\xi).$$ 
To estimate $\beta_\xi(\g o, o)$,  let $z_1,z_2$ be the two points in the intersection of $\partial H(0)$ and the geodesic
connecting $\xi$ to $\g \xi$.  Then $d(z_1,o)$ and $d(z_2,\g o)$ are uniformly bounded and $\beta_\xi(z_1,z_2)=d(z_1,z_2)=d(\g o, o)+O(1)$ implying that $\beta_\xi(\g o, o)=d(\g o,o)+O(1)$.
Plugging in this estimate gives
$$\m(\fh_t)\asymp \mathop{\sum_{\g\in \G_\infty}}_{d(o,\g o)\geq 2t-c} e^{-\delta d(o,\g o)}(d(o,\g o)-2t+O(1)).$$

We may write $\Gamma_\infty$ as $\{ \gamma_v: v\in \z^{\kappa}\}$ where $\gamma_v$ is the translation by $v$.
Note that $ d(o, \g_v(o))=2 \log\|v\|+O(1)$. Hence\begin{eqnarray*}
\m(\fh_t)&\asymp&\mathop{\sum_{v\in \Z^\kappa}}_{\|v\|\geq Ce^t} e^{-\delta (2\log\|v\|+O(1))}(2\log\|v\|-2t+O(1))\\
&\asymp &\mathop{\sum_{v\in \Z^\kappa}}_{\|v\|\geq Ce^t} \|v\|^{-2\delta}\log(\|v\|e^{-t})\\
&\asymp &\int_{x\in \br^k, |x|\ge e^t} \|x\|^{-2\delta}\log(\|x\|e^{-t})dx\asymp e^{-t(2\delta-\kappa)}\\
\end{eqnarray*}
as claimed.

For the thickened target, for any $x\in \fh_t$ and $|s|\leq 1/2$,  if $xa_t\in \fh_{t-1}$, then $\fh_t\subseteq \tilde\fh_t\subseteq \fh_{t-1}$. Hence 
$\m(\tilde\fh_t)\asymp e^{-t(2\delta-\kappa)}$ as well.
\end{proof}

Next we show regularity.
\begin{prop}
Both families
$\{\fh_t: t\geq 1\}$ and $\{\tilde \fh_t: t\geq 1\}$ are regular.
\end{prop}
\begin{proof} 
Since $\fh_t\subseteq \tilde\fh_t\subseteq \fh_{t-1}$ it is enough to show that $\{\fh_t\}$ is regular.
Let $H'(t)$ denote the fundamental domain for $\G_\infty\bk \tilde{H}(t)$ defined above, and $\cF_\G$ a fundamental domain for $\G\bk \bH^n$ such that $\cF_\G\cap \tilde{H}(t)=H'(t)$. For any $t\geq 1$ let $\psi_t^\pm$ be smooth functions on $\G_\infty\bk \tilde{H}(t)$ taking values in $[0,1]$ satisfying 
$$\Id_{H'(t+1)}\leq \psi_t^-\leq \Id_{H'(t)}\leq \psi_t^+\leq \Id_{H'(t-1)},$$ 
and we can choose them so that $\cS(\psi_t^\pm)=O(1)$, independent of $t$.

Since $\cF_\G\cap \tilde{H}(t)=H'(t)$, we can lift the functions $\psi_t^\pm$ to right $K$-invariant, and  left $\G$-invariant functions $\Psi_t^\pm$ on $G$. As such, by looking at their values on a fixed fundamental domain, we see that
$$0\leq \Id_{\fh_{t+1}}\leq \Psi_t^-\leq \Id_{\fh_t}\leq \Psi_t^+ \leq \Id_{\fh_{t-1}}\leq 1.$$ 
Since $\m(\fh_t)\asymp \m(\fh_{t\pm1})$, we also get that $\m(\Psi_t^\pm)\asymp \m(\fh_t)$, implying that $\{\fh_t\}$ is regular. 
\end{proof}

\begin{proof}[Proof of Theorem \ref{t1}]
Applying Theorem \ref{t6} to the shrinking targets $B_t=\fh_{i,t}$ gives (1). 
For (2), fix some $\eta<\frac{1}{2\delta-\kappa_i}$ and let $c:=1-\eta(2\delta-\kappa_j)>0$.  Consider the shrinking family $\{B_t=\fh_{i,\eta\log(t)}\}$, which is regular and satisfies $\m(B_{2t})\asymp \m(B_t) \asymp t^{-(2\delta-\kappa_i)\eta}$. In particular we have that 
 $$\sum_j \frac{\log(\m(B_{2^j}))}{2^j \m(B_{2^j})}\asymp \sum_j \frac{\log(j)}{2^{cj}}<\infty,$$
so Theorem \ref{t8}(2) implies Theorem \ref{t1}(2).  
\end{proof}

 \subsection{Shrinking balls in $\G\bk G$}\label{sball}
In this subsection, our goal is to show that for $x\in \op{supp}(\m)$, the family $\{xG_\e: 0<\e<1\}$ is regular as stated in Proposition \ref{p:GeRegular}.
We may assume that $o\in \op{hull} \Lambda$ and fix $v_o\in \T_o(\bH^n)$ so that $M=\op{Stab}(v_o)$.
 For $\xi\in \partial\bH^n$ and $\e>0$, let $B_\xi(\e)$ denote the Euclidian ball of radius $\e$ around $\xi$. 
When $\G$ is convex co-compact, Sullivan's shadow lemma implies that $\nu_o(B_\xi(\e))\asymp \e^\delta$, but when $\G$ has cusps,
 the measure $\nu_o(B_\xi(\e))$ fluctuates as $\e\to 0$. 
Nevertheless, we have the following:
 \begin{lem}\label{l:Shadow}
 For any $\xi\in \Lambda$, the following holds for all  sufficiently small $\e>0$:
 \begin{enumerate}
 \item  $\min\{\e^\delta, \e^{2\delta-\kappa_{\rm min}}\}\ll\nu_o(B_\xi(\e))\ll \max\{\e^\delta, \e^{2\delta-\kappa_{\rm max}}\}.$
  \item $\nu_o(B_\xi(2\e)) \ll \nu_o(B_\xi(\e))$.
 \end{enumerate}
 \end{lem}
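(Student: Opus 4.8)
The plan is to reduce both parts to the \emph{global measure formula} for the Patterson--Sullivan density of a geometrically finite group, which follows from Sullivan's shadow lemma together with the structure of the horoballs described in \S\ref{s:cusps} (cf. \cite{SV95}; compare the $\G_\infty$-orbit count in the proof of Proposition~\ref{p:BMScusp}). Writing $\e=e^{-t}$ and letting $\xi_t$ be the point at distance $t$ from $o$ on the ray $[o,\xi)$, let $\rho_t\ge 0$ be the depth of $\xi_t$ inside the horoball of the fixed $\G$-invariant family containing it (set $\rho_t=0$ if $\xi_t$ lies outside every such horoball), and let $k_t$ be the rank of the corresponding parabolic fixed point (relevant only when $\rho_t>0$). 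The formula states that, with implied constants uniform in $\xi$, in $\e$, and across the finitely many cusps,
\[
\nu_o(B_\xi(\e))\;\asymp\;\e^{\delta}\,e^{-\rho_t(\delta-k_t)},
\]
which reduces to the convex cocompact shadow bound $\nu_o(B_\xi(\e))\asymp\e^{\delta}$ when $\rho_t=0$. I would first recall this statement; the only quantitative facts used below are that $s\mapsto\rho_s$ is $1$-Lipschitz along the ray and that $0\le\rho_t\le t+O(1)$ (choosing $o$ outside all horoballs).

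Part~(1) is then an optimisation of a linear exponent. If $k_t\le\delta$ then $e^{-\rho_t(\delta-k_t)}\le 1$, and if $k_t>\delta$ then $e^{-\rho_t(\delta-k_t)}\le e^{(t+O(1))(k_t-\delta)}\asymp\e^{-(k_t-\delta)}$; hence $\nu_o(B_\xi(\e))\ll\max\{\e^{\delta},\e^{2\delta-k_t}\}\le\max\{\e^{\delta},\e^{2\delta-\kappa_{\rm max}}\}$, using $k_t\le\kappa_{\rm max}$ and $\e<1$. Symmetrically, $e^{-\rho_t(\delta-k_t)}\ge 1$ when $k_t\ge\delta$ and $e^{-\rho_t(\delta-k_t)}\ge e^{-(t+O(1))(\delta-k_t)}\asymp\e^{\delta-k_t}$ when $k_t<\delta$, so $\nu_o(B_\xi(\e))\gg\min\{\e^{\delta},\e^{2\delta-k_t}\}\ge\min\{\e^{\delta},\e^{2\delta-\kappa_{\rm min}}\}$. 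For part~(2), passing from $\e$ to $2\e$ replaces $t$ by $t-\log2$, so $\rho_t$ changes by at most $\log2$, and $k_t$ is unchanged as long as $\xi_t$ and $\xi_{t-\log 2}$ lie in the same horoball; in the only other case the ray has crossed the thick part within a window of length $\log2$, forcing both depths to be $O(1)$ and both correction factors to be $\asymp1$. Since $1\le k_t<2\delta$ gives $|\delta-k_t|<\delta=O(1)$, the formula yields $\nu_o(B_\xi(2\e))/\nu_o(B_\xi(\e))\asymp 2^{\delta}e^{O(1)}=O(1)$, which is the asserted doubling bound.

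The main obstacle is bookkeeping rather than conceptual: one must cite or reprove the global measure formula with implied constants that are genuinely uniform in $\xi$, in $\e$, and over all the cusps, and must handle cleanly the transitional regime where $\xi_t$ sits near the boundary between the thick part and a horoball. Both difficulties are benign precisely because the rank $k_t$ enters the formula only through the product $\rho_t(\delta-k_t)$ and $\rho_t$ varies Lipschitz-continuously along the ray, so a bounded change in $t$ can only change the exponent by $O(1)$; everything else is the elementary exponent estimate carried out above.
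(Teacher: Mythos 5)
Your proposal is correct and follows essentially the same route as the paper: both reduce to the Stratmann--Velani global measure formula (\cite[Theorem~2]{SV95}) and then run elementary exponent estimates, using that the relevant depth function varies Lipschitz-continuously along the ray. The only cosmetic difference is that you parametrize the correction factor by the horoball depth $\rho_t$ while the paper uses $d(\xi_t,\G o)$; these agree up to an additive $O(1)$ (since $o$ is outside all horoballs and the boundary of each cusp neighborhood within the convex core is compact), so the two formulations are interchangeable.
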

 \begin{proof}  
 For $\xi\in \Lambda$ let $\xi_t\subset \op{hull}(\Lambda)$ denote the unit speed geodesic ray connecting $o$ to $\xi$.
Let $b(\xi_t)\subset \partial (\bH^n)$ denote the shadow at infinity of the hyperbolic hyperplane meeting $\xi_t$ orthogonally. 
 Then 
 $$b(\xi_t)=B_\xi(\e)\quad\text{ for $\e\asymp e^{-t}$.}$$
 If $\{\xi_1, \cdots, \xi_k\}$ denotes the set
of all representatives of $\Gamma$-orbits in the set of parabolic limit points, and $H_{\xi_i}\subset \bH^n$ is a sufficiently deep horoball based at $\xi_i$,
then $H:=\bigcup_{i=1}^k \Gamma (H_{\xi_i})$ forms a family of disjoint horoballs.

Now by \cite[Theorem 2]{SV95}, we have  \begin{equation}\label{e:shadow}
 \nu_o(b(\xi_t))\asymp e^{-\delta t+d(\xi_t,\G o)(\kappa(\xi_t)-\delta)},
 \end{equation} 
 where $\kappa(\xi_t)$ is the rank of $\xi_i$ if $\xi_t\in \Gamma (H_{\xi_i})$ for some $i$, and $\kappa(\xi_t)=\delta$ otherwise. 
Now, to prove (1), let $\e=e^{-t}$. First, if $\xi_t$ is not in  $H$, the claim follows easily.
 Next, if $\xi_t\in \Gamma(H_{\xi_i})$, then $\kappa(\xi_t)=\kappa_i$ and $\nu_o(b(\xi_t))\asymp e^{-\delta t+d(\xi_t,\G o)(\delta-\kappa_i))}$.
If $\kappa_i\le \delta$, then
 $$\delta t\leq \delta t+d(\xi_t,\G o)(\delta-\kappa_i)\leq 
  t(2\delta-\kappa_{\rm min})$$
 and hence $e^{-t(2\delta-\kappa_{\rm min})}\ll \nu_o(b(\xi_t))\ll e^{-\delta t}$.
Now if $\kappa_i>\delta$, then
 $$-\delta t\leq -\delta t+d(\xi_t,\G o)(\kappa_i-\delta)\leq  t(\kappa_{\rm max}-2\delta),$$
 so that $e^{-\delta t}\ll \nu_o(b(\xi_t))\ll e^{-(2\delta-\kappa_{\rm max}) t}$. This proves (1).

For (2), we claim that $\nu_o(b(\xi_{t+1}))\asymp \nu_o(b(\xi_t))$.
In the case when $\xi_t,\xi_{t+1}\in \Gamma(H_{\xi_i})$ for some $i$,  we have that $|d(\xi_t,\G o)-d(\xi_{t+1},\G o)|\ll1$. 
Now,  using \eqref{e:shadow} we get
 $$\frac{ \nu_o(b(\xi_{t+1}))}{ \nu_o(b(\xi_t))}\asymp  e^{(d(\xi_{t+1},\G o)-d(\xi_{t},\G o))(\delta-k_i)}\asymp 1.$$
If this case does not happen, there must be some $t'\in [t,t+1]$ such that the projection of $\xi_{t'}$ in $\op{core}(\cM)$
 lies in the compact part  $\op{core}(\cM)- \cup_i \Gamma \ba H_{\xi_i} $, and hence $d(\xi_{t'},\G o)=O(1)$. But then also $d(\xi_{t},\G o)$ and $d(\xi_{t+1},\G o)$ are bounded and 
 $\nu_o(b(\xi_{t+1}))\asymp \nu_o(b(\xi_t))\asymp e^{-\delta t}$ as well.
 \end{proof} 

 \begin{prop}\label{pp1}
Let $\cK\subseteq X$ be a compact subset. Let $\delta_-=\min\{\delta,2\delta-k_{\rm max}\}$ and $\delta_+=\max\{\delta, 2\delta-k_{\rm min}\}$. For any $x\in \cK\cap \Omega$, we have that for all $0<\e<r_x$,
\begin{enumerate}
\item $\e^{1+\dim{M}+2\delta_+} \ll \bms(xG_\e)\ll\e^{1+\dim{M}+2\delta_-},$
\item $\bms(xG_{2\e})\asymp \bms(xG_\e),$
\item $\bms(xG_\e A_1)\asymp  \e^{-1}\bms(xG_\e )$, 
\item $\bms(xG_\e M)\asymp\e^{-\dim(M)}\bms(xG_\e )$, 
\end{enumerate}
where all the implied constants above are uniform over all $x\in \cK$.
\end{prop}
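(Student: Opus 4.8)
The plan is to compute $\bms(xG_\e)$ directly from the local product structure of $\bms$ recorded in \secref{s:preliminaries}, and then to read off (1)--(4), the two Patterson--Sullivan directions being controlled by Sullivan's shadow lemma \lemref{l:Shadow}. Write $x=[g]$; since $x\in\Omega$ we have $g^+,g^-\in\Lambda$. Because $P=N^-AM$ is an analytic direct product near the identity and $G_\e\asymp B_\e=P_\e N_\e$ for small $\e$, there is an absolute constant $c_0\geq 1$ so that $xG_\e$ is sandwiched between $xN^-_{\e/c_0}A_{\e/c_0}M_{\e/c_0}N_{\e/c_0}$ and $xN^-_{c_0\e}A_{c_0\e}M_{c_0\e}N_{c_0\e}$; and for $\e$ small relative to $r_x$ every such box injects into $\G\bk G$, so its $\bms$-measure is computed by the product formula \eqref{e:mPsi}.

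The core step is the following computation: for a box of the form $xN^-_\e A_L M_\e N_\e$ with $0<\e,L\leq 1$, one integrates out the $M$-factor against Haar measure (giving $\vol(M_\e)\asymp\e^{\dim M}$), the $A$-factor against $dt$ (giving a factor $\asymp L$, since along $N^-A$ the backward endpoint is unchanged and the weight $e^{\delta t}$ in \eqref{e:nu} is $\asymp 1$), the $N$-factor against $d\mu^{\PS}$ from \eqref{e: muPS}, and the $N^-$-factor against $d\nu_o$, to obtain
$$\bms\big(xN^-_\e A_L M_\e N_\e\big)\ \asymp\ L\,\e^{\dim M}\,\nu_o\big(B_{g^+}(\e)\big)\,\nu_o\big(B_{g^-}(\e)\big),$$
with all implied constants uniform over $x$ in the fixed compact set $\cK$. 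The inputs are that the Busemann/conformal weights are pinched between two positive constants uniformly on $\cK$ for $\e,L\leq1$, and that the image of $N^-_\e$ under $n^-\mapsto(gn^-)^-$ (resp.\ of $N_\e$ under $n\mapsto(gn)^+$) is, with uniform constants, comparable to a metric $\e$-ball in $\partial\bH^n$ centered at $g^-$ (resp.\ at $g^+$). Taking $L\asymp\e$ then gives $\bms(xG_\e)\asymp\e^{1+\dim M}\nu_o(B_{g^+}(c\e))\,\nu_o(B_{g^-}(c\e))$, and since $g^\pm\in\Lambda$ the shadow lemma \lemref{l:Shadow}(1) bounds each factor $\nu_o(B_{g^\pm}(c\e))$ between $\e^{\delta_+}$ and $\e^{\delta_-}$; this is precisely (1) (with the lower exponent $1+\dim M+2\delta_+$). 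Part (2) is the same argument run for the boxes of size $2\e$ and $\e$, using the doubling estimate \lemref{l:Shadow}(2) for the two $\nu_o$-factors and the trivial doubling of $\e$ and of $\e^{\dim M}$.

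For (3) I would use that for $|s|\leq 1$ one has $a_sN_\e a_{-s}\asymp N_\e$, $a_sN^-_\e a_{-s}\asymp N^-_\e$, that $a_s$ commutes with $A$ and $M$, and that $A_\e A_{[-1,1]}=A_{[-1-\e,1+\e]}\asymp A_{[-1,1]}$; pushing the flow segment across the box then gives $xG_\e A_1\asymp xN^-_\e A_{[-1,1]}M_\e N_{c\e}$, and the displayed formula with $L\asymp 1$ yields $\bms(xG_\e A_1)\asymp\e^{\dim M}\nu_o(B_{g^+}(c\e))\,\nu_o(B_{g^-}(c\e))\asymp\e^{-1}\bms(xG_\e)$. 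For (4), since $M$ is compact, normalizes $N^-$ and $N$, and commutes with $A$, one has $N_\e M\asymp MN_{c\e}$ and $M_\e M=M$, hence $xG_\e M\asymp xN^-_\e A_\e M N_{c\e}$; replacing $\vol(M_\e)\asymp\e^{\dim M}$ by $\vol(M)\asymp 1$ in the formula gives $\bms(xG_\e M)\asymp\e\,\nu_o(B_{g^+}(c\e))\,\nu_o(B_{g^-}(c\e))\asymp\e^{-\dim M}\bms(xG_\e)$.

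The part I expect to require the most care is the uniformity over $\cK$: checking that the charts $n^-\mapsto(gn^-)^-$ and $n\mapsto(gn)^+$ are bi-Lipschitz onto their images with constants independent of $x\in\cK$, that the conformal weights are uniformly pinched there, and the short but slightly fiddly bookkeeping in (3) and (4) showing that translating a product box by $A_1$ or by $M$ leaves it comparable, with absolute constants, to a product box of the same transversal size $\asymp\e$. (We also note the evident misprint in (1): the lower bound should read $\e^{1+\dim M+2\delta_+}$.)
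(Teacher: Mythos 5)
Your proposal is correct and follows essentially the same route as the paper's proof: the paper likewise reduces $xG_\e$ to a comparable product ``flow box,'' computes its $\bms$-measure via the product structure as $\asymp \e\cdot\e^{\dim M}\,\nu_o(B_{g^+}(\e))\,\nu_o(B_{g^-}(\e))$ (citing \cite[Lemma 4.7]{MMO17} for this rather than rederiving it), and then applies Lemma~\ref{l:Shadow} for (1)--(2), with (3) and (4) read off from the box description exactly as you do. Your observation about the misprint in the lower bound of (1) is also correct.
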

\begin{proof} 
Fix a compact subset $\cF_0\subseteq G$ such that $\cK=\Gamma\ba \Gamma \cF_0$.  First, since we assume $\e\leq r_x$,
 we have that $\bms(xG_\e)=\m(gG_\e)$ for $x=[g]$.
We will use the flow boxes
 \begin{equation}\label{e:fb}
 \cB(g,\e):=g\cB(\e)=g(N_\e^+N^-\cap N_\e^-N^+AM)M_\e A_\e.
 \end{equation}
 It is shown in \cite[Lemma 4.7]{MMO17}  that $\cB(g, \e)\asymp  g G_\e$
 and that 
 \begin{equation}\label{e:BMSfb} \bms(\cB(g,\e))=(1+O(\e))2\e\nu_{g (o)}(gN_\e^+v_o^+)\nu_{g(o)}(gN_\e^-v_o^-)\vol_M(M_\e),\end{equation}
 where $\vol_M(M_\e)\asymp \e^{\dim(M)}$ and all implied constants are absolute.

We can estimate $\nu_{g (o)}(gN_\e^\pm v_o^\pm)\asymp \nu_o(B_{g^\pm}(\e))$, with the implied constants uniform for $g\in \cF_0$.
 Hence by Lemma \ref{l:Shadow}, we have 
$$\e^{\delta_+}\ll\nu_{g (o)}(gN_\e^\pm v_o^\pm)\ll\e^{\delta_-}.$$
Since $\vol_M(M_\e)\asymp \e^{\dim{M}}$, we get that 
$$ \e^{2\delta_+ +1+\dim{M}}\ll \bms(g\cB(\e))\ll \e^{2\delta_- +1+\dim{M}}$$
proving (1).  (2) follows similarly from Lemma \ref{l:Shadow}(2). 
(3) and (4) follow easily from the above description of $g\cB(\e)$.
\end{proof}

 \begin{prop}\label{p:GeRegular}\label{pp2}
 Fix a compact set $\cK\subseteq X$.  There exist some $c>1$ and $\alpha>1$(depending on $\ell $, and $\cK$) such that the family $\{xG_\epsilon:\; x\in \cK\cap \Omega,\; \epsilon<r_x\}$ and the family of their thickenings are regular for $\cS_{l}$.
 \end{prop}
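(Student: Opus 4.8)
The plan is to reduce everything to the measure estimates of Proposition~\ref{pp1} together with a routine bump-function construction. Fix a compact lift $\cF_0\subseteq G$ of $\cK$, and observe that, the injectivity radius being a positive continuous function, $r_0:=\inf_{x\in\cK}r_x>0$; every constant produced below will depend only on $\cK$ (through $\cF_0$ and $r_0$) and on the Sobolev order $\ell$. Since regularity is an assertion about $\e\to0$, it suffices to treat all sufficiently small $\e$. We will freely use the comparison $x\cB(\e)\asymp xG_\e$ with the flow boxes recalled in the proof of Proposition~\ref{pp1}.

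For the balls $xG_\e$ themselves I would fix once and for all a smooth $\rho:[0,\infty)\to[0,1]$ with $\rho\equiv1$ on $[0,\tfrac12]$ and $\rho\equiv0$ on $[1,\infty)$, and set, on $G$, $\Phi^-_\e(h)=\rho(2d(e,h)/\e)$ and $\Phi^+_\e(h)=\rho(d(e,h)/(2\e))$. These are smooth (constant near $e$, where $d(e,\cdot)$ fails to be differentiable), satisfy $\Id_{G_{\e/4}}\le\Phi^-_\e\le\Id_{G_{\e/2}}$ and $\Id_{G_\e}\le\Phi^+_\e\le\Id_{G_{2\e}}$, and, because $d(e,\cdot)$ has derivatives bounded in the fixed basis of $\op{Lie}(G)$ away from $e$, obey $\cS_\ell(\Phi^\pm_\e)\ll\e^{-\ell}$. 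As $d$ is left $G$-invariant, $h\mapsto\Phi^\pm_\e(g^{-1}h)$ has the same Sobolev norm, and it descends to $\Psi^\pm_{x,\e}\in C^\infty(\G\bk G)$ supported in $xG_{2\e}\subseteq X$. Then $0\le\Psi^-_{x,\e}\le\Id_{xG_\e}$ and $\Id_{xG_\e}\le\Psi^+_{x,\e}\le1$; the doubling property Proposition~\ref{pp1}(2) gives $\bms(\Psi^-_{x,\e})\ge\bms(xG_{\e/4})\asymp\bms(xG_\e)$ and $\bms(\Psi^+_{x,\e})\le\bms(xG_{2\e})\asymp\bms(xG_\e)$; and since Proposition~\ref{pp1}(1) bounds $\bms(xG_\e)$ below by a positive power of $\e$, one gets $\cS_\ell(\Psi^\pm_{x,\e})\ll\e^{-\ell}\ll\bms(xG_\e)^{-\alpha}$ with $\alpha=\ell/(1+\dim M+2\delta_+)$, which may be enlarged to any value $>1$ because $\bms(xG_\e)<1$. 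This shows $\{xG_\e\}$ is $(c,\alpha)$-regular.

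For the thickenings $\widetilde{xG_\e}=xG_\e A_{1/2}$ the targets are anisotropic, so I would replace the radial bumps by product bumps built in the local decomposition $G=WA$ near $e$, with $W=\exp(\fn^-\oplus\fm\oplus\fn^+)$ transverse to $A$ (a chart with derivatives bounded both ways on the relevant $O(1)$-neighbourhood of $e$). The outer approximant would be a bump on $gW_{2C\e}A_{7/8}$ equal to $1$ on $gW_{C\e}A_{3/4}\supseteq gG_\e A_{1/2}$, with $C=C(\cK)$ chosen from the inclusion $G_\e A_{1/2}\subseteq W_{C\e}A_{3/4}$; after pushing down, its support projects into $xG_{C'\e}A_1$, the number of overlapping $\G$-translates is bounded uniformly over $\cK$ because the support has $\e$-independent diameter, and the result is $\le c(\cK)$. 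The inner approximant would be a bump supported in $gW_{2c_0\e}A_{2c_0}$ and equal to $1$ on $gW_{c_0\e}A_{c_0}$, where $c_0=c_0(\cK)$ is fixed so small that $W_{2c_0\e}A_{2c_0}\subseteq G_\e A_{1/2}$ and $\op{diam}(W_{2c_0\e}A_{2c_0})<r_0$, so that the sub-box $xW_{c_0\e}A_{c_0}$ embeds in $\G\bk G$ and the pushed-down function is genuinely $\le\Id_{\widetilde{xG_\e}}$. For the measure comparisons I would combine Proposition~\ref{pp1}(2),(3) with the $a_t$-invariance of $\bms$, which yields $\bms(xG_\e A_{1/2})\asymp\bms(xG_\e A_1)\asymp\e^{-1}\bms(xG_\e)$; both approximants then have $\bms$-mass $\asymp\bms(\widetilde{xG_\e})$. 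A product bump varies on scale $\e$ in the $W$-directions and on scale $1$ along $A$, so $\cS_\ell\ll_\cK\e^{-\ell}$, and Proposition~\ref{pp1}(1),(3) turns this into $\cS_\ell\ll\bms(\widetilde{xG_\e})^{-\alpha}$ for a suitable $\alpha>1$. This gives regularity of $\{\widetilde{xG_\e}\}$.

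The only substantive ingredient is Proposition~\ref{pp1} --- the flow-box comparison, the two-sided measure estimate, the doubling property, and the behaviour in the $A$-direction --- which itself rests on Sullivan's shadow lemma, \cite{SV95} and \cite{MMO17}; given it, the argument above is pure bookkeeping. The two points demanding care are the anisotropy of the thickened targets, which forces product bumps with two different length scales in place of a single radial bump, and keeping every implied constant uniform over the compact set $\cK$; the latter is where the uniform positive lower bound $r_0$ on the injectivity radius over $\cK$ is used, in particular to secure an embedded support for the inner approximant of $\widetilde{xG_\e}$ so that it is pointwise dominated by $\Id_{\widetilde{xG_\e}}$.
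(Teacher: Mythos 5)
Your argument for the balls $\{xG_\e\}$ is essentially identical to the paper's: radial bump functions sandwiching $\Id_{G_\e}$ at scales $\e/2$ and $2\e$, translated to $x$, with Sobolev norm $\ll \e^{-\ell}$, combined with the two-sided measure estimate and the doubling property of Proposition~\ref{pp1}. One small slip: you write $\alpha=\ell/(1+\dim M+2\delta_+)$, but to deduce $\e^{-\ell}\ll\bms(xG_\e)^{-\alpha}$ you must use the \emph{upper} bound $\bms(xG_\e)\ll\e^{1+\dim M+2\delta_-}$ of Proposition~\ref{pp1}(1), which gives the requirement $\alpha\ge\ell/(1+\dim M+2\delta_-)$; your $\delta_+$ version is too small when $\delta_+>\delta_-$ (i.e.\ in the presence of cusps). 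You rescue this immediately by noting that $\alpha$ can be enlarged at will since $\bms(xG_\e)<1$, so the conclusion is unaffected; this is the same fix one needs for the paper's own exponent, which appears to be missing the factor of $2$.

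Where you genuinely add value is the thickenings. The paper disposes of $xG_\e A_1$ with a single sentence (``the same argument shows\dots''), but a literal repetition with radial bumps cannot work: a ball small enough to sit inside $xG_\e A_{1/2}$ has measure $\asymp\e\cdot\bms(xG_\e A_{1/2})$, while a ball large enough to contain it has measure $\gg\bms(xG_\e A_{1/2})$, so neither approximant would satisfy the $\bms$-comparability condition. Your product-bump construction in a $W\times A$ chart — scale $\e$ in the transverse directions, scale $1$ along $A$ — is exactly the right replacement and is the honest content behind ``the same argument.'' Your attention to the two uniformity issues (bounding the overlap multiplicity for the outer approximant, whose support has $\e$-independent diameter, and using $r_0=\inf_\cK r_x>0$ to embed the inner approximant) is also correct and fills details the paper leaves implicit. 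Net assessment: the ball case tracks the paper, the thickening case is carried out more carefully than the paper, and I see no genuine gap.
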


\begin{proof}
We can find smooth functions  $\Psi^\pm_\e:G\to [0,1)$ such that 
$$\Psi_\e^-(g)=\left\{\begin{array}{cc}1 & g\in G_{\e/2}\\ 0 &g\not\in G_\e\end{array}\right\}\quad
\Psi_\e^+(g)=\left\{\begin{array}{cc}1 & g\in G_{\e}\\ 0 & g\not\in G_{2\e}\end{array}\right\}$$
 satisfying $\cS_l(\Psi_\e^\pm)\ll \e^{-l}$. For $x\in \cK\cap \Omega$, let 
$\Psi_{x,\e}^\pm(g):=\Psi_{\e}^\pm(xg)$. Then 
$$0\leq \Psi_{x,\e}^-\leq \Id_{xG_\e}\leq  \Psi_{x,\e}^+.$$
We then have that
$$\cS_l(\Psi_{x,\e})\ll \e^{-l}\ll \bms(G_\e)^{-\alpha},$$
for $\alpha=\frac{l}{1+\dim(M)+\delta_-}$
and that $\bms(xG_{\e/2})\leq \bms(\Psi_{x,\e}^-)\leq  \bms(xG_\e)$ so that  $\bms(xG_{\e})\ll \bms(xG_{\e/2})\leq \bms(\Psi_{x,\e}^-)$, and similarly 
$\bms(\Psi_{x,\e}^+)\ll \bms(xG_\e)$. 

The same argument shows that the thickened sets $xG_\e A_1$ are $(c,\alpha)$-regular for some constant $c>1$ and $\alpha=\frac{l}{\dim(M)+\delta_-}$.
\end{proof} 

The proofs of Propositions \ref{pp1} and \ref{pp2} can easily be adapted for the following:
\begin{prop}\label{equal} Let $\cM$ be convex cocompact. Fix $x_0\in \op{supp}(\m)$.
Then the families $\{x_0G_\e M \}$ and $\{x_0G_\e M A_{1/2} \}$ are regular
and $\m(x_0G_\e M)\asymp \e^{2\delta+1}$ and $\m(x_0G_\e M A_{1/2})\asymp \e^{2\delta}$ with the implied constants uniform over all $x_0$.

\end{prop}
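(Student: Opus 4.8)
The plan is to reuse, almost word for word, the flow-box computations behind Propositions \ref{pp1} and \ref{pp2}, exploiting that convex cocompactness removes all the cusp fluctuations: there are no parabolic limit points, so Sullivan's shadow lemma holds in its clean form $\nu_o(B_\xi(\e))\asymp\e^{\delta}$ uniformly for $\xi\in\Lambda$ and all small $\e$ (the case of \lemref{l:Shadow}(1) with no cusps), and $\Omega=\op{supp}(\m)$ is compact so that $r_{x_0}>0$.

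First I would establish the measure asymptotics. Writing $x_0=[g]$ and taking $\e<r_{x_0}$ small, recall from \eqref{e:fb}--\eqref{e:BMSfb} that the flow box $\cB(g,\e)=g(N_\e^+N^-\cap N_\e^-N^+AM)M_\e A_\e$ satisfies $\cB(g,\e)\asymp gG_\e$ and
$$\bms(\cB(g,\e))\asymp\e\cdot\nu_{go}(gN_\e^+v_o^+)\cdot\nu_{go}(gN_\e^-v_o^-)\cdot\vol_M(M_\e)\asymp\e\cdot\e^{\delta}\cdot\e^{\delta}\cdot\e^{\dim M},$$
so $\bms(x_0G_\e)\asymp\e^{2\delta+1+\dim M}$. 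To pass to $x_0G_\e M$ I would replace the factor $M_\e$ by the whole of $M$ in \eqref{e:fb}; since $M$ is compact of fixed size the enlarged box is still $\asymp x_0G_\e M$ with constants independent of $\e$, and \eqref{e:BMSfb} goes through with $\vol_M(M_\e)\asymp\e^{\dim M}$ replaced by $\vol_M(M)\asymp 1$, giving $\bms(x_0G_\e M)\asymp\e^{2\delta+1}$ (equivalently, combine part (4) of Proposition \ref{pp1} with the displayed estimate). To pass to $x_0G_\e MA_{1/2}$, note that $A$ centralizes $M$, and replace in addition the flow factor $A_\e$ by $A_{1/2}$ — a segment of fixed length — so that the flow extent $\asymp\e$ becomes $\asymp 1$; this yields $\bms(x_0G_\e MA_{1/2})\asymp\e^{-1}\bms(x_0G_\e M)\asymp\e^{2\delta}$. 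The doubling relations $\bms(x_0G_{2\e}M)\asymp\bms(x_0G_\e M)$ and $\bms(x_0G_{2\e}MA_{1/2})\asymp\bms(x_0G_\e MA_{1/2})$ are then immediate.

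For regularity I would copy the construction in the proof of Proposition \ref{pp2}. Because $x_0G_\e M$ is a genuine open neighborhood, right $M$-invariant, and $M$ is compact, one can build right $M$-invariant functions $\Psi^\pm_{x_0,\e}\in C^\infty(\T^1(\cM))$ with
$$\Id_{x_0G_{\e/2}M}\le\Psi^-_{x_0,\e}\le\Id_{x_0G_\e M}\le\Psi^+_{x_0,\e}\le\Id_{x_0G_{2\e}M}$$
and $\cS_l(\Psi^\pm_{x_0,\e})\ll\e^{-l}$ — the approximation scale is $\e$ only in the transversal directions, and the functions may be taken constant along $M$, so nothing is lost there. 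Together with the previous paragraph this gives $\bms(\Psi^\pm_{x_0,\e})\asymp\bms(x_0G_\e M)\asymp\e^{2\delta+1}$ and $\cS_l(\Psi^\pm_{x_0,\e})\ll\e^{-l}\ll\bms(x_0G_\e M)^{-\alpha}$ with $\alpha=\tfrac{l}{2\delta+1}$, so the family $\{x_0G_\e M\}$ is $(c,\alpha)$-regular. For $\{x_0G_\e MA_{1/2}\}$ one squeezes instead between sets of the form $x_0G_{\e/2}MA_{1/2}$ and $x_0G_{2\e}MA_{1}$, still with $\cS_l\ll\e^{-l}$ since the $A$-extent has fixed size, obtaining $(c,\alpha)$-regularity with $\alpha=\tfrac{l}{2\delta}$.

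The step I expect to require the most care is the enlargement of the flow box: one must verify that replacing $M_\e$ by $M$ (and $A_\e$ by $A_{1/2}$) in \eqref{e:fb} still produces a set $\asymp x_0G_\e M$ (resp. $\asymp x_0G_\e MA_{1/2}$) with Lipschitz constants uniform in $\e$, and that the volume formula \eqref{e:BMSfb} continues to hold with the corresponding bounded factor in place of $\vol_M(M_\e)$ (resp. of the $\asymp\e$ flow extent). This is exactly where convex cocompactness enters — compactness of $\Omega$, the uniform lower bound on the injectivity radius near $x_0$, and the clean shadow lemma — and once it is in hand the statement reduces entirely to Propositions \ref{pp1} and \ref{pp2}.
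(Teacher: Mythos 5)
Your proposal is correct and takes the same route the paper intends: the paper gives no argument beyond the remark that "the proofs of Propositions \ref{pp1} and \ref{pp2} can easily be adapted," and your write-up carries out exactly that adaptation — using the flow-box formula \eqref{e:fb}--\eqref{e:BMSfb} together with the cusp-free shadow lemma $\nu_o(B_\xi(\e))\asymp\e^\delta$ to get the measure asymptotics, and then mimicking the bump-function construction of Proposition \ref{pp2}, with the correct observation that only the transversal ($N^\pm$ and, for the unthickened set, $A$) directions force the $\cS_l\ll\e^{-l}$ loss.
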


When $\cM$ has cusps, we do not have such asymptotics for $\m(x_0G_\e M)$ and $\m(x_0G_\e M A_{1/2})$ uniformly for all $x_0\in  \op{supp}(\m)$. Nevertheless, we have the following estimates:
\begin{prop}\label{p:balls}
Let $\cK\subseteq X$ be a compact subset of $X$, and let $x_0=[g_0M]\in \cK\cap \Omega$.
\begin{enumerate}
\item If both $g_0^+, g_0^-\in \partial \bH^n$ are parabolic fixed points corresponding to cusps of ranks $\kappa_1$ and $\kappa_2$ respectively, then 
$$\m(x_0 G_\e M)\asymp \e^{4\delta+1-\kappa_1-\kappa_2}.$$
\item If  $x_0A$ is bounded,  then  
 $$\m(x_0G_\e M)\asymp \e^{2\delta+1}.$$
 \item If $\sup_{t\in \br} \frac{d(x_0a_t, \G o)}{\log |t|}<\infty $,  then 
 $$\lim_{\e\to 0}\frac{\log(\m(x_0G_\e M))}{\log\e}=2\delta+1.$$
 \end{enumerate}
\end{prop}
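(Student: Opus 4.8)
The plan is to reduce everything to the shadow lemma estimate \eqref{e:shadow} combined with the flow-box formula \eqref{e:BMSfb} from \cite{MMO17}, which reads
$$\bms(\cB(g,\e))=(1+O(\e))\,2\e\,\nu_{g(o)}(gN_\e^+v_o^+)\,\nu_{g(o)}(gN_\e^-v_o^-)\,\vol_M(M_\e),$$
so that up to uniform constants $\m(x_0 G_\e M)\asymp \e^{1+\dim M}\,\nu_o(B_{g^+}(\e))\,\nu_o(B_{g^-}(\e))$ for $x_0=[g]$. Since $\vol_M(M_\e)\asymp \e^{\dim M}$, the entire content of the proposition is the precise rate of decay of $\nu_o(B_{g^\pm}(\e))$ along the geodesic through $x_0$, and the three parts simply correspond to three regimes for the Stratmann--Velani estimate \eqref{e:shadow}: $\nu_o(b(\xi_t))\asymp e^{-\delta t + d(\xi_t,\G o)(\kappa(\xi_t)-\delta)}$ with $\e\asymp e^{-t}$.

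For part (1), since $g^+$ is a parabolic fixed point of rank $\kappa_1$, the forward geodesic ray $\xi_t$ eventually enters and remains inside a single horoball $\gamma H_{\xi_i}$ (because it converges to its tip), so for all large $t$ we have $\kappa(\xi_t)=\kappa_1$ and $d(\xi_t,\G o)=t+O(1)$; plugging this in gives $\nu_o(B_{g^+}(\e))\asymp e^{-\delta t}e^{t(\kappa_1-\delta)}=e^{-(2\delta-\kappa_1)t}\asymp \e^{2\delta-\kappa_1}$. The same computation applies to $g^-$ with $\kappa_2$ in place of $\kappa_1$ (reparametrizing the backward ray), and multiplying the two together with the $\e^{1+\dim M}$ factor from the flow box yields $\m(x_0 G_\e M)\asymp \e^{(2\delta-\kappa_1)+(2\delta-\kappa_2)+1+\dim M}$. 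I would state the result for $\dim M$ absorbed as in the rest of \S\ref{sball}, matching the normalization of Proposition \ref{pp1}; in the form written in the statement this is $\asymp \e^{4\delta+1-\kappa_1-\kappa_2}$ once one accounts for the $M$-factor exactly as in Proposition \ref{equal}. For part (2), if the full geodesic through $x_0$ stays in a compact subset of $\T^1(\cM)$, then $\xi_t$ never enters the horoball family $H$ for $t$ large (in either direction), so $\kappa(\xi_t)=\delta$ and \eqref{e:shadow} gives the clean Sullivan estimate $\nu_o(B_{g^\pm}(\e))\asymp \e^\delta$, hence $\m(x_0 G_\e M)\asymp \e^{2\delta+1}$ exactly as in the convex cocompact case. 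For part (3), the hypothesis $d(x_0 a_t,\G o)=O(\log|t|)$ means $d(\xi_t,\G o)=O(\log t)=O(\log\log(1/\e))$, so the exponent correction $d(\xi_t,\G o)|\kappa(\xi_t)-\delta|$ in \eqref{e:shadow} is $o(t)=o(\log(1/\e))$; therefore $\log\nu_o(B_{g^\pm}(\e))=-\delta\log(1/\e)+o(\log(1/\e))$, and taking logarithms of the product flow-box formula gives $\log\m(x_0 G_\e M)=(2\delta+1)\log\e+o(\log\e)$, i.e.\ $\lim_{\e\to 0}\log\m(x_0G_\e M)/\log\e=2\delta+1$.

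The main technical point — and the only place requiring genuine care rather than bookkeeping — is the first one: justifying that a geodesic ray converging to a bounded parabolic fixed point $\xi_i$ is, for all sufficiently large $t$, trapped inside a fixed horoball of the invariant family $H$, with $d(\xi_t,\G o)=t+O(1)$ there. This is exactly the horoball geometry recalled in \cite{Bo} and used throughout \S\ref{s:cusp} (see the discussion around \eqref{inj} and Proposition \ref{p:Ht}): once the ray penetrates depth $c$ into $\gamma H_{\xi_i}$ it cannot leave, because the horoballs in $H$ are disjoint and the ray's endpoint is the tip. One should also note that the constants in \eqref{e:BMSfb} and in the estimate $\nu_{g(o)}(gN_\e^\pm v_o^\pm)\asymp \nu_o(B_{g^\pm}(\e))$ are uniform for $g$ in the fixed compact set $\cF_0$ with $\cK=\G\ba\cF_0$, which is what makes the implied constants in all three parts uniform over $x_0\in\cK\cap\Omega$ (with, in part (1) and (3), the additional qualitative input about the geodesic through $x_0$). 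The passage from $\m(x_0 G_\e M)$ to $\m(B_{1/t}(x_0))$ and $\m(\tilde B_{1/t}(x_0))$ needed for Theorem \ref{t4} is then the same $A$- and $M$-thickening computation as in Propositions \ref{pp1}(3)--(4) and \ref{equal}, replacing one factor of $\e^\delta$ by $\e^0$ along the flow direction.
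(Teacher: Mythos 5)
Your overall plan coincides with the paper's: reduce everything to the Stratmann--Velani shadow estimate \eqref{e:shadow} combined with the flow-box formula \eqref{e:BMSfb}, and read off three regimes for $d(\xi_t,\G o)$. The horoball-trapping argument in part (1) and the $o(\log(1/\e))$ correction in part (3) are exactly what the paper does.

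However, there is a persistent bookkeeping slip in the normalization. From \eqref{e:BMSfb} one gets $\bms(\cB(g,\e))\asymp\e^{1+\dim M}\,\nu_o(B_{g^+}(\e))\,\nu_o(B_{g^-}(\e))$, but this is the measure of the flow box, equivalently of $x_0G_\e$ (no right $M$). The set in the proposition is $x_0G_\e M$, and the right $M$-extension replaces the $\vol_M(M_\e)\asymp\e^{\dim M}$ factor by $\vol_M(M)\asymp 1$ (this is precisely Proposition \ref{pp1}(4)), so the correct starting identity is
\begin{equation*}
\m(x_0G_\e M)\;\asymp\;\e\,\nu_o(B_{g^+}(\e))\,\nu_o(B_{g^-}(\e)),
\end{equation*}
with no $\dim M$ in the exponent. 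As written, your formula over-counts by $\e^{\dim M}$; the surplus then reappears at the end of part (1) as $\e^{4\delta+1+\dim M-\kappa_1-\kappa_2}$, which you try to reconcile by a vague appeal to ``the $M$-factor.'' The same mismatch is silently dropped in parts (2) and (3). Fixing the displayed identity at the outset removes all three discrepancies cleanly and is exactly the step the paper makes via $\m(x_0G_\e M)\asymp\m(\cB(g_0,\e)M)$.

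One smaller point: in part (2) you assert that a geodesic trapped in a compact set of $\T^1(\cM)$ ``never enters the horoball family $H$.'' Whether that is literally true depends on how deep the chosen horoballs are; what \eqref{e:shadow} actually requires is only that $d(\xi_t,\G o)$ stay bounded, which makes the correction term $d(\xi_t,\G o)(\kappa(\xi_t)-\delta)$ uniformly $O(1)$ regardless of shallow horoball incursions. The paper phrases part (2) in this more robust form (``equivalent to $d(\xi_t,\G o)\le c_0$''), and so should you, since it avoids a needless dependence on the depth of $H$.
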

\begin{proof}
Without loss of generality, we may assume that $g_0=e$. Set $\xi_t:=a_t$ and let $\xi^\pm:=\lim_{t\to\pm\infty} \xi_t$. Recall that by  \cite[Lemma 4.7]{MMO17} and  \eqref{e:BMSfb}, we have
$$\m(x_0G_\e M)\asymp \m(\cB(g_0,\e)M)\asymp \e \cdot \nu_o(B_{\xi^+}(\e))\nu_o(B_{\xi^-}(\e)).$$ 
It thus remains to estimate $\nu_o(B_{\xi^\pm}(\e))$ in each of the above cases.
When $\xi^\pm$ are parabolic limit points, there exists $t_0$ such that for all $t\geq t_0$ (resp, $t<-t_0$), we have that $\xi_{\pm t}\in H_{\xi^\pm}$ is in the horoball centered at $\xi^\pm$. Since $\xi^\pm$ are parabolic limit points, this implies that for $t\geq t_0$, we have that $d(\xi_t,\G o)=|t|+O(1)$, and hence, setting $\e=e^{-|t|}$ by
\eqref{e:shadow}, we can estimate $\nu_o(B_{\xi^+}(\e))\asymp \e^{2\delta-\kappa_1}$  and similarly $\nu_o(B_{\xi^-}(\e))\asymp \e^{2\delta-\kappa_2}$, proving (1).

Next, the boundedness of $x_0A$ means that $\sup_td(\xi_t,\G o)<\infty$.
In this case, \eqref{e:shadow} implies that $\nu_o(B_{\xi^\pm}(\e))\asymp \e^{\delta}$, proving (2).

Finally, assuming that  $\sup \frac{d(x_0a_t, \G o)}{\log |t|}<\infty$, again taking $\e=e^{-t}$,  \eqref{e:shadow}  now implies that 
$$\e^\delta |\log(\e)|^{-c_1}\ll  \nu_o(B_{\xi^\pm}(\e))\ll \e^\delta |\log(\e)|^{c_1},$$
and hence 
$$\log(\m(x_0G_\e M))=( 2\delta+1)\log\e+O(\log|\log\e|).$$ 
This proves (3).
 \end{proof}

 We finish this section with the proofs of Theorems \ref{t3} and \ref{t4}.
 \begin{proof}[Proof of Theorem \ref{t3}]
(1) follows by applying Theorem \ref{t6} to the shrinking targets $B_t=x_0 G_{1/t} M$ with thickening $\tilde{B_t}=x_0G_{1/t}MA_{1/2}$ which is inner regular with $\log(\m(\tilde B_t))=-2\delta t+O(1)$ by Proposition \ref{equal}.

 For (2) we consider the shrinking targets $B_t:=x_0G_{t^{-\eta}}M$.
 Note that for any $x\in \cM$, we have that $d(\cG^s(x),x_0)<t^{-\eta}$ exactly when $\cG^s(x)\in B_t$. Since
 $\m(B_t)\asymp t^{-\eta(2\delta+1)}$, we have $\sum_j\frac{\log(\m(B_{2^j})}{2^j\m(B_{2^j})}<\infty$ when $(2\delta+1)\eta<1$. So (2) follows from
 Theorem \ref{t8}(2).
   \end{proof}
 
 \begin{proof}[Proof of Theorem \ref{t4}]
 For (1), we note that Theorem \ref{t1}(1) implies that for $\m$-a.e. $x_0\in \T^1(\cM)$,
 $$\limsup_{t\to\infty}\frac{d(\cG^s(x_0),o)}{\log(t)}\le \frac{1}{2\delta-\kappa_{\rm max}} .$$ For any such $x_0$, the families
 $\{B_t=x_0G_{1/t}M\}$ and $\{\tilde{B_t}\}$ are regular. 
 
 By Proposition \ref{p:balls}(3), we have
 $\lim_{t\to \infty}\frac{-\log(\m( B_t))}{\log t}=2\delta+1$, and hence  $\lim_{t\to \infty}\frac{\log(\m(\tilde B_t))}{-\log t}=2\delta$.
Now, using this limit together with Theorem \ref{t6}, we get that for $\m$-a.e. $x\in \T^1(\cM)$
$$\lim_{t\to\infty}\frac{\log(\tau_{B_t}(x))}{\log(t)}=2\delta \quad \text{ and } \quad \lim_{t\to\infty}\frac{\log(\tau_{B_t}(x))}{-\log(\m(\tilde B_t))}=1.$$

For (2), given two cusps $\xi_1,\xi_2$ with ranks $\kappa_1,\kappa_2$, consider a geodesic connecting $\xi_1$ to $\xi_2$ and let $g_0\in \T^1(\bH^n)$ be any point on this geodesic, and set $x_0=[g_0]\in \T^1(\cM)$. Consider the shrinking targets $B_t=x_0 G_{1/t}M$. By Proposition \ref{p:balls}(1),
 we have  $\log(\m(\tilde{B}_t))=-(4\delta-\kappa_1-\kappa_2)\log(t)+O(1)$ and hence (2) follows from Theorem \ref{t6}. 

 \end{proof}

 \subsection{Shrinking tubular neighborhoods}\label{sg}
For a fixed closed geodesic $\cC\subset \T^1(\cM)$ and $\e>0$, we set 
 $\cC_\e=\{x\in \T^1(\cM): d(\cC, x)<\e\}.$
The proof of Theorem \ref{t55} follows as above from the following.
\begin{prop}
The families  $\{\cC_\e:\e<\e_0\}$ and $\tilde{\cC}_\e=\{xa_s: x\in \cC_\e,\; |s|\leq 1/2\}$  are both regular and satisfy $\bms(\cC_\e)\asymp \bms(\tilde{\cC}_\e) \asymp \e^{2\delta}$.
\end{prop}
\begin{proof}
Recall the notations 
 $X, X(\e)$ and $Y(\e)=X-X(\e)$ from section \ref{s:cusps}.
 They are all $M$-invariant subsets of $\Gamma\ba G$, and in the following proof,
 we will regard them as subsets in $\Gamma\ba G/M$.
We can present $\mathcal C=[g_0] A M/M$ and an element of $\mathcal C$ is
represented by $[g_0]a_tM$ for a unique $0\le t < L$ where $L$ is the length of $\mathcal C$.
Let $\e_0$ be sufficiently small so that  $\cC\subseteq Y({\e_0})$ and let $0<\e\le \e_0$.
 Let $Q_\e$ denote a maximal set of points $x_i\in \cC$ such that the sets $x_iG_\e M$ are pairwise disjoint. Writing $x_i=x_0a_{t_i}$ the condition that $x_i G_\e \cap x_{j}G_\e=\emptyset$ imply that $|t_i-t_{j}|\geq \e$ and the maximality condition implies that $|t_i-t_{i+1}|\leq 3\e$. Hence $\#Q_\e \asymp  L\e^{-1}$. Since
$$\bigcup_{x_i\in Q_\e} x_iG_\e M \subseteq \cC_\e\subseteq \bigcup_{x_i\in Q_\e} x_iG_{3\e}M,$$
we can estimate
$$\sum_{x_i \in Q_\e} \bms( x_iG_{\e} M)\leq \bms(\cC_\e)\leq \sum_{x_i\in Q_\e}\bms( x_iG_{3\e}M).$$

Now Proposition \ref{p:balls}(2) implies that 
$\bms(g_i G_\e M)\asymp \e^{2\delta+1}$ where the implied constant does not depend on $i$.  Summing over all $x_i\in Q_\e$, we get that indeed
$\bms(\cC_\e)\asymp \e^{2\delta}$.

Next, to show regularity, for each point $x_i\in Q_\e$, let $\Psi_{\e,i}^\pm$ be smooth non-negative functions approximating $x_iG_\e M$ from below and $x_iG_{3\e}M$ from above respectively, with $\cS_l(\Psi^\pm_{\e,i})\ll \e^{-l}$, and define $\Psi^\pm_\e=\sum_i \Psi^{\pm}_{\e,i}$. 
Since the sets $x_i G_\e M$ are pairwise disjoint, we have that $\Psi_\e^-\leq \Id_{\cC_\e}\leq \Psi_\e^+$ and moreover
$$\bms(\Psi_\e^+)\leq \sum_{Q_\e} \bms(x_iG_{3\e}M)\ll \sum_{Q_\e} \bms(x_iG_{\e}M)\leq \bms(\cC_\e),$$
and similarly that $\bms(\cC_\e) \ll \bms(\Psi_\e^-)$. Since $\# Q_\e\ll \e^{-1}$, we can bound  $\cS_l(\Psi^{\pm}_\e)\ll \e^{-(l+1)}\ll \bms(\cC_\e)^{-\alpha}$ with $\alpha=\frac{l+1}{2\delta}$, showing that the family $\{\mathcal C_\e\}$ is $(c,\alpha)$ regular for some $c>1$, and $\alpha=\frac{l+1}{2\delta}$.

Finally, note that there is $c\geq 1$ such that $a_{-s} G_\e a_s\subseteq G_{c\e}$ for all $|s|\leq 1/2$. Then any point $x\in \tilde{\cC}_\e$ is of the form $x=x_0a_tga_s M$ with $0\leq t\leq L, g\in G_\e$ and $|s|\leq 1/2$. We can write $ga_s=a_{s}a_{-s}ga_s\in a_sG_{c\e}$, to get that $x\in x_0a_{t+s}G_{c\e}\in \cC_{c\e}$. Therefore $\cC_\e\subseteq \tilde{\cC}_\e\subseteq \cC_{c\e}$, implying that $\{\tilde{\cC}_\e\}$ is also regular with $\bms(\tilde{\cC}_{\e})\asymp \bms(C_\e)$.
\end{proof}

\end{document}